\documentclass{amsart}
\usepackage{amsmath}
  \usepackage{paralist}
  \usepackage{graphics} 
  \usepackage{epsfig} 
\usepackage{graphicx}  \usepackage{epstopdf}
 \usepackage[colorlinks=true]{hyperref}
 \usepackage{amsmath}

\usepackage{mathrsfs}
\usepackage{graphicx}
\usepackage{fancyhdr}
\usepackage{textcomp}
\usepackage{amsthm}
\usepackage{amscd}
\usepackage{latexsym}
\usepackage{mathrsfs}
\usepackage{amsfonts}
\usepackage{amssymb}
\usepackage{appendix}
\usepackage{tikz}
\usepackage{upgreek}
\usepackage{minitoc}
\hypersetup{urlcolor=blue, citecolor=red}

  \textheight=8.2 true in
   \textwidth=5.0 true in
    \topmargin 30pt
     \setcounter{page}{1}




\newcommand{\opnm}{\operatorname}

\newcommand{\BigO}{\mathcal{O}}

\newcommand{\ii}{\textnormal{i}}

\newcommand{\ee}{\textnormal{e}}

\newcommand{\CH}{\mathscr{C}}
\newcommand{\SH}{\mathscr{S}}
\newtheorem{thm}{Theorem}[section]
\newtheorem{prop}[thm]{Proposition}
\newtheorem{cor}[thm]{Corollary}
\newtheorem{lem}[thm]{Lemma}

\newtheorem{remarque}[thm]{Remark}
\theoremstyle{defi}

\title[Study of the Kramers-Fokker-Planck-magnetic quadratic operator ] 
      {Study of the Kramers-Fokker-Planck quadratic operator with a constant magnetic field}

\author[Zeinab Karaki]{}

\subjclass{Primary: 15A63, 47A10, 47D03; Secondary: 82B40, 82D40.}
 \keywords{quadratic differential operator; spectrum; Kramers-Fokker-Planck operator; magnetic field; return to the equilibrium}

 \email{zeinab.karaki@univ-nantes.fr}
 \date{1 March 2022}



\begin{document}
\maketitle

\centerline{\scshape Zeinab Karaki}
\medskip
{\footnotesize
 \centerline{Universit\'e de Nantes}
   \centerline{Laboratoire de Mathematiques Jean Leray}
  \centerline{ 2, rue de la Houssini\`ere}
   \centerline{BP 92208 F-44322 Nantes Cedex 3, France}

\medskip

\bigskip


 \begin{abstract}
We study the quadratic Kramers-Fokker-Planck operator with a constant magnetic field and with a quadratic potential. We describe the exact expression of the norm of the semi-group associated to the operator near the equilibrium. At this level, explicit and accurate estimates of this norm are shown in small and long times as well as uniform-in-time estimates when the magnetic parameter $b$ tends to infinity.
\end{abstract}

\section{Introduction and main results}
\subsection{Presentation of the operator}
We consider $ P $ the quadratic Kramers-Fokker-Planck operator with a constant external magnetic field $ B_e \in \mathbb{R}^3 $ and a linear isotropic electric field $ E_{e} (x): = ax $ with $ a> 0 $
\begin{align*}
P=(-\nabla_v +v/2)\cdot (\nabla_v +v/2) +v\cdot \nabla_x +E_e\cdot \nabla_v +(v\wedge B_e)\cdot \nabla_v.
\end{align*}
where $ v \in \mathbb{R}^{3} $ represents the velocity, $ x \in \mathbb{R}^{d} $ represents the space variable and $ t \geq $ 0 is the time.
We note that in dimension $3$, by rotation we can always reduce to a vector field of type $ (0,0, b) $, which allows us to take $ B_e \in \{(0 , 0, b) \, / \, b \in \mathbb{R} \} $. In addition, in dimension three, we know that there is a direction that does not depend on the magnetic field (direction parallel or anti-parallel to the magnetic field from which the magnetic effect is zero). This justifies a restriction to a model in dimension $2$.

By changing variables (see \cite[Section 4.1]{zk2} for more details), the dimension-$2$ version of the operator $ P $ can be written in the following symmetric form:
\begin{align}\label{def P}
2P_{a,b}:= (-\nabla_v +v)\cdot (\nabla_v +v) + 2\,\sqrt{a} (v\cdot\nabla_x -x\cdot \nabla_v)+b (v_1\partial_{v_2}-v_2\partial_{v_1}) . 
\end{align}
Now, we decompose the operator $ P_ {a, b} $ according to the operators of creation and annihilation (cf.\ \cite[Theorem 1.4]{viola2013}). H. Risken in \cite{risken1998fokker} has proposed a decomposition into operators of creation and annihilation in the case of the Kramers-Fokker-Planck operator without a magnetic field, and it turns out that we are able even with a magnetic field to write $P_{a,b}$ in the following form:
\begin{align} \label{expression op-1}
  P_{a,b}=\frac{1}{2}\sum_{j,k=1}^{4}\, m_{jk}A_{k}^{*}A_{j},
  \end{align}
  where  $ (m_{j,k})_{j,k}$ are the coefficients of the matrix $ M_{a, b} $ defined by
   \begin{align}\label{matrice-1}
 M_{a,b}= \begin{pmatrix}
 1 & b & \sqrt{a} & 0 \\
 -b & 1 & 0 &\sqrt{a}\\
 -\sqrt{a} & 0 & 0 & 0\\
 0 & -\sqrt{a} & 0 & 0
 \end{pmatrix}
 \end{align}
 and the annihilation operators $ (A_j)_{j = 1}^{4} $ and the creation operators
$ (A_j^{*})_{j = 1}^{4} $ are
\begin{align*}
&A_j=\partial_{x_{j}} + x_{j} , && A_{j}^{*}=-\partial_{x_{j}}+x_{j}&&&\forall j=1,2, \\
& A_j=\partial_{v_{j-2}} + v_{j-2}&& A_{j}^{*}=-\partial_{v_{j-2}}+v_{j-2} &&& \forall j=3,4.
\end{align*}
The advantage of this decomposition is that we can apply the abstract theory made for this type of operators. We refer to the work of Aleman-Viola in \cite{aleman2014singular}, which consists, for example, in giving an exact description of the spectrum of the operator according to the spectrum of its associated matrix (see also \cite[Thm 1.4]{viola2013}, \cite[Section 1]{aleman2018weak} and \cite{sjostrand1974parametrices, hitrik2011resolvent}). For explicit calculations of the Kramers-Fokker-Planck quadratic operator spectrum without a magnetic field, we refer readers to the book of B. ~ Helffer and F. ~ Nier \cite[Section 5.5.1]{nier2005hypoelliptic}.

It is of interest to study the asymptotic behavior of the solution of the evolution equation associated with the non-self-adjoint operator $P_{a,b}$
 $$ \begin{cases}
 \partial_t u(t,x)+P_{a,b}\,u(t,x)=0\\
 u_{\mid_{t=0}}=u_0\in L^2 (\mathbb{R}^n)\,.
 \end{cases}$$
  More specifically, we are interested in the question of return to equilibrium.
  This usually amounts to studying the following operator:
  $$ \ee^{-tP_{a,b}}(1-\Pi_0) \,$$
  where $ \Pi_0 $ is the spectral projector associated with the eigenvalue zero (typically it represents the orthogonal projector on the vector subspace generated by the Maxwellian associated with the problem) and where $ \ee^{- t \,P_{a, b}} $ represents the semi-group associated with the operator $ P_ {a, b} $. Many works \cite{aleman2018weak, aleman2014singular} lead to the question of return to equilibrium starting from the study of the asymptotic norm of the matrix exponential $ \ee^{- tM_ {a, b} } $. This can be done by using the following equality:
  $$ \Vert \ee^{-tP_{a,b}}(1-\Pi_0)\Vert_{\mathcal{B}(L^2(\Bbb R^4))}=\Vert \ee^{-tM_{a,b}}\Vert\,, $$ 
  where $ \Vert \cdot \Vert_{\mathcal{B} (L^2 (\Bbb R^4))} $ denotes the norm on the set $ \mathcal{B} (L^2 (\Bbb R^4)) $ of bounded operators on $ L^2 (\Bbb R^4) $ and $ \Vert\cdot \Vert $ denotes the standard norm on matrices as linear operators, induced by the Euclidean norm on $ \mathbb{C}^4$. We note that this last equality is shown in \cite[Corollary 7]{aleman2014singular}.
  
  The calculation of the previous norm in terms of the parameters $a$ and $b$ represents the heart of our present work. In this paper, we will continue the study of the model case of the Fokker-Planck operator with an external magnetic field, begun in a wider than quadratic framework in \cite{ZK1} for purely kinetic study using the method of hypocoercivity, which was developed by F.~ H\'erau \cite{herau2004isotropic} and C.~Villani \cite{dric2009hypocoercivity}, and the method of factorization and enlargement of functional spaces, which is addressed by M.~Gualdani {\it et al.} \cite{gualdani2010factorization}. Then, we refer readers to the article \cite{zk2} for a demonstration of maximal estimates on this model, giving a characterization of the domain of its closed extension based on extremely sophisticated maximal hypoellipticity tools developed by B.~ Helffer and J.~ Nourrigat in the 1980s (see \cite{helffer1980hypoellipticite}).
  
  We will study the case of the quadratic operator of Kramers-Fokker-Planck with a constant external magnetic field. First, we explain the exact expression of the exponential norm of the matrix $ M_{a, b} $, using the Lagrange interpolation method and some properties of symmetries that we will notice in the algebraic structure associated to $ M_{a, b} $. Then, we deduce precise estimates of the norm of the solution of the equation of evolution associated with the operator $ P_{a, b} $ at several levels. We measure in particular the behavior in small and long time and uniformly in $ t $, and we study asymptotics when the magnetic field tends to infinity.
  \subsection{The main results}
The first result of this work is an exact description of the spectrum of the operator $ P_{a, b} $ and the behavior, when $ b $ goes to infinity of its spectral gap which equals $ \min \{\Re \lambda, \, \lambda \in \Sigma (P_{a, b}) \setminus \{0 \}) \} = \min \{\Re \lambda, \, \lambda \in \Sigma (M_{ a, b}) \} $ where $ \Sigma (P_{a, b}) $ denotes the spectrum of $ P_{a, b} $.
\begin{thm}\label{thm q1}
Let $ a> 0 $ and $b \in \Bbb{R}$. Then the generalized eigenvalues of the operator $ P_ {a, b} $ in \eqref{def P} are 
  $$\alpha_{k}=\sum_{j=1}^{4}\,\lambda_{j}k_{j}, \quad \forall k\in \mathbb{N}^{4}, $$
  with $ \{\lambda_{j} \}_{j = 1}^{4} $ the eigenvalues of $ M_{a, b} $, which are defined in \eqref{2} and \eqref{3} . In addition,
  \begin{enumerate}
\item[1)] When $b\to +\infty $, we have
\begin{align*}
&\lambda_{1}=\frac{a}{b^2}+\mathcal{O}(\langle a\rangle^2 b^{-4} )- \ii (-\frac{a}{b}+\mathcal{O}(\langle a\rangle^2 b^{-3} )) ,\\
&\lambda_{2}=1-\frac{a}{b^2}+\mathcal{O}(\langle a\rangle^2 b^{-4} )- \ii (b+\frac{a}{b}+\mathcal{O}(\langle a\rangle^2 b^{-3} )),\\
&\lambda_{3}=\frac{a}{b^2}+\mathcal{O}(\langle a\rangle^2 b^{-4} )+\ii (-\frac{a}{b}+\mathcal{O}(\langle a\rangle^2 b^{-3} )) ,\\
&\lambda_{4}=1-\frac{a}{b^2}+\mathcal{O}(\langle a\rangle^2 b^{-4} )+ \ii (b+\frac{a}{b}+\mathcal{O}(\langle a\rangle^2 b^{-3} )).
\end{align*}
where $\langle a\rangle=(1+a^2)^{1/2}$.
\item[2)] If $ b \neq 0 $, for all $ \lambda_{j} \in \Sigma (M_{a, b}) $ then $ \Im (\lambda_{j}) \neq 0 $ .
  \item[3)] The spectrum of $ M_{a, b} $ does not depend on the sign of $ b $.
\end{enumerate} 
\end{thm}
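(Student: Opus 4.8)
The plan is to reduce the entire statement to an explicit spectral study of the $4\times4$ matrix $M_{a,b}$. Granting the Aleman--Viola description recalled right after \eqref{expression op-1}, the generalized eigenvalues of $P_{a,b}$ are precisely the sums $\sum_{j=1}^4\lambda_j k_j$ as soon as the eigenvalues $\lambda_j$ of $M_{a,b}$ are known and $M_{a,b}$ satisfies the accretivity hypothesis $\Re\lambda_j>0$ needed to apply that abstract theory. Hence the whole theorem rests on diagonalizing $M_{a,b}$ and tracking its eigenvalues as functions of $(a,b)$, which I treat below.

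First I would compute the characteristic polynomial by exploiting the $2\times2$ block structure of
$$M_{a,b}-\lambda I=\left(\begin{smallmatrix} A & \sqrt a\,I\\[2pt] -\sqrt a\,I & -\lambda I\end{smallmatrix}\right),\qquad A=\left(\begin{smallmatrix} 1-\lambda & b\\[2pt] -b & 1-\lambda\end{smallmatrix}\right).$$
Because the two lower blocks are scalar multiples of the identity, they commute with everything, and the block-determinant formula collapses the computation to the $2\times2$ determinant $\det(-\lambda A+aI)$. A short calculation then gives
\begin{equation*}
\det(M_{a,b}-\lambda I)=(\lambda^2-\lambda+a)^2+b^2\lambda^2,
\end{equation*}
which factors over $\C$ as $\bigl(\lambda^2-(1-\ii b)\lambda+a\bigr)\bigl(\lambda^2-(1+\ii b)\lambda+a\bigr)$. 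The eigenvalues are therefore the roots of the two conjugate quadratics $\lambda^2-(1\mp\ii b)\lambda+a=0$, written out by the quadratic formula; these explicit roots are exactly the ones the statement records, the roots of the second quadratic being the complex conjugates of those of the first.

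The qualitative assertions now follow almost immediately. For 3), the polynomial depends on $b$ only through $b^2$, so $\Sigma(M_{a,b})=\Sigma(M_{a,-b})$. For 2), a real root $\lambda$ would force the two nonnegative terms $(\lambda^2-\lambda+a)^2$ and $b^2\lambda^2$ to vanish simultaneously; with $b\neq0$ this yields $\lambda=0$ and then $a=0$, contradicting $a>0$, so every $\lambda_j$ has nonzero imaginary part. The same elementary substitution, applied to $\lambda=\ii y$ in the quadratics, shows that no eigenvalue ever sits on the imaginary axis (it forces $y=0$ and then $a=0$); combined with a continuity argument in $b$ starting from the case $b=0$, where the roots $\tfrac12(1\pm\sqrt{1-4a})$ visibly have positive real part, this proves $\Re\lambda_j>0$ for all $(a,b)$ and supplies the accretivity invoked in the first paragraph.

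It remains to prove 1). I would substitute $\lambda=\tfrac12\bigl((1-\ii b)\pm\sqrt{(1-\ii b)^2-4a}\bigr)$, factor $-b^2$ out of the discriminant $(1-b^2-4a)-2\ii b$, and expand the square root $\sqrt{1+2\ii/b-(1-4a)/b^2}$ in powers of $1/b$. The two branches produce the large root $\lambda_2\sim1-\ii b$ and a small root; since the product of the two roots is $a$, the small one satisfies $\lambda_1\sim a/(1-\ii b)=a/b^2+\ii a/b+\cdots$, and $\lambda_3,\lambda_4$ are their conjugates. The main obstacle is precisely this last expansion: one must carry the binomial series for the square root far enough to isolate the leading terms $a/b^2$ and $a/b$ and, above all, to certify that the remainders are genuinely $\mathcal O(\langle a\rangle^2 b^{-4})$ on the real parts and $\mathcal O(\langle a\rangle^2 b^{-3})$ on the imaginary parts, uniformly in $a$. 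Keeping honest track of the $a$-dependence of the neglected terms (rather than merely showing they are $o(1)$) is the delicate point, whereas the factorization and the qualitative claims 2) and 3) are immediate.
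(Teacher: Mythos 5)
Your proposal is correct, and its core computation takes a genuinely different, and slicker, route than the paper. The paper obtains the characteristic polynomial $\lambda^4-2\lambda^3+(2a+b^2+1)\lambda^2-2a\lambda+a^2$ by cofactor expansion and then solves the quartic by Ferrari's method (depressing the quartic, locating the resolvent root $y_0=a-\tfrac14$, and only then factoring into two quadratics); you instead observe that this polynomial is the sum of squares $(\lambda^2-\lambda+a)^2+b^2\lambda^2$, which factors immediately over $\C$ as $\bigl(\lambda^2-(1-\ii b)\lambda+a\bigr)\bigl(\lambda^2-(1+\ii b)\lambda+a\bigr)$. This buys you three things at once: the eigenvalues of \eqref{2}--\eqref{3} by the quadratic formula; property 3) for free, since the polynomial depends on $b$ only through $b^2$ (the paper deduces it from the explicit root formulas, Remark \ref{rem-symetrie}); and a one-line proof of property 2) valid for all $b\neq 0$ simultaneously — a real root would force the two nonnegative squares to vanish, giving $\lambda=0$ and then $a=0$ — which is considerably shorter than the paper's case analysis via the system \eqref{sys 2} (the paper, moreover, only writes out the case $b>0$). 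Your additional verification that $\Re\lambda_j>0$ via the no-imaginary-roots argument and continuity in $b$ is rigor the paper does not supply (it applies Corollary \ref{cor q1} as stated, with no accretivity hypothesis); it is harmless and arguably welcome. The one place you stop short of a finished argument is part 1): you set up the right expansion (factoring $-b^2$ from the discriminant, binomial series in $1/b$, and the product-of-roots identity $\lambda_1\lambda_2=a$ to extract the small root without cancellation), and you correctly identify that the substance is certifying the remainders $\BigO(\langle a\rangle^2 b^{-4})$ and $\BigO(\langle a\rangle^2 b^{-3})$ uniformly in $a$; this bookkeeping is exactly what the paper executes through the asymptotics $|A|=b^2+4a+1-8a/b^2+\BigO(\langle a\rangle^2 b^{-4})$, $c_1=1-2a/b^2+\BigO(\langle a\rangle^2 b^{-4})$, $c_2=-b-2a/b+\BigO(\langle a\rangle^2 b^{-3})$, so your plan would go through as stated, but in your write-up it remains a sketch rather than a completed estimate.
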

We are interested in the explicit computation of the norm of the matrix exponential $ \ee^{- t \, M_{a, b}} $ as a function of the following auxiliary quantities:  
  \begin{align}
\label{eq_def_A} A &= A_1 + \ii A_2 = 1 - b^2 - 4a - 2\ii b\\
\label{eq_def_c} c &= c_1 + \ii c_2 = \sqrt{A},
\end{align}
where $A_1,A_2,c_1,c_2\in \Bbb R$.  
\begin{thm}\label{thm_2,4}
For $a, b \in \Bbb{R}$ and $t > 0$, let $A = A_1 + \ii A_2$ and $c = c_1 + \ii c_2$ be as in \eqref{eq_def_A} and \eqref{eq_def_c}. When
\begin{equation}\label{eq_def_T}
		T = \frac{1}{2}\left((|A| - A_1 + 2)\cosh c_1 t + (|A| + A_1 - 2)\cos c_2 t\right)
\end{equation}
and
\begin{equation}\label{eq_def_S}
\begin{aligned}
	S &= 8a(1-\Re(\cosh ct)) + \frac{1}{4}(2-A_1)|A|(\cosh 2c_1 t - \cos 2c_2 t)
	\\ & \qquad + (2a + \frac{1}{4}|A|^2)(\cosh 2c_1 t + \cos 2c_2 t - 2),
\end{aligned}
\end{equation}
the norm of the exponential of $M_{a,b}$ defined in \eqref{matrice-1} can be computed as
\[
	\|\ee^{-tM_{a,b}}\|^2 = \frac{1}{|A|}\ee^{-t}\left(T + \sqrt{S}\right).
\]
\end{thm}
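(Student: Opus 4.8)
The plan is to exploit the $2\times 2$ block structure of $M_{a,b}$ so as to reduce the operator norm of a $4\times 4$ matrix exponential to that of a single $2\times 2$ block, and then to read off that norm from the trace and determinant of the associated Gram matrix. This is precisely the ``symmetry in the algebraic structure'' of $M_{a,b}$, and it replaces a direct $4\times 4$ Lagrange interpolation. With $J = \begin{pmatrix} 0 & 1 \\ -1 & 0 \end{pmatrix}$ and $I_2$ the identity, \eqref{matrice-1} reads $M_{a,b} = \begin{pmatrix} I_2 + bJ & \sqrt a\, I_2 \\ -\sqrt a\, I_2 & 0 \end{pmatrix}$. Since $J$ is diagonalized by the unitary $U = \tfrac{1}{\sqrt 2}\begin{pmatrix} 1 & 1 \\ \ii & -\ii \end{pmatrix}$, namely $U^* J U = \mathrm{diag}(\ii,-\ii)$, conjugating by $\mathrm{diag}(U,U)$ and reordering coordinates produces a unitary $W$ with
\begin{equation*}
W^* M_{a,b} W = M_+ \oplus M_-, \qquad M_\pm = \begin{pmatrix} 1 \pm \ii b & \sqrt a \\ -\sqrt a & 0 \end{pmatrix}.
\end{equation*}
As $W$ is unitary, $\|\ee^{-tM_{a,b}}\| = \max(\|\ee^{-tM_+}\|,\|\ee^{-tM_-}\|)$; and since $M_{a,b}$ is real, $M_- = \overline{M_+}$, so $\ee^{-tM_-} = \overline{\ee^{-tM_+}}$, and because complex conjugation preserves singular values both blocks have equal norm. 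Thus $\|\ee^{-tM_{a,b}}\| = \|\ee^{-tM_-}\|$, and the discriminant of the characteristic polynomial of $M_-$ is exactly $A = (1-\ii b)^2 - 4a$ of \eqref{eq_def_A}, so $c=\sqrt A$ is its natural parameter.

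Next I would exponentiate the block. Writing $M_- = \tfrac{1-\ii b}{2} I_2 + R$ with $R$ traceless gives $R^2 = \tfrac{c^2}{4} I_2$, whence
\begin{equation*}
\ee^{-tM_-} = \ee^{-t(1-\ii b)/2}\Bigl(\cosh\tfrac{ct}{2}\, I_2 - \tfrac{2}{c}\sinh\tfrac{ct}{2}\, R\Bigr),
\end{equation*}
an expression independent of the branch of $\sqrt A$ since $\cosh$ and $z\mapsto \sinh(z)/z$ are even. Setting $H = (\ee^{-tM_-})^*\ee^{-tM_-}$, the quantity $\|\ee^{-tM_-}\|^2$ is the top eigenvalue of this positive Hermitian $2\times 2$ matrix, so
\begin{equation*}
\|\ee^{-tM_-}\|^2 = \tfrac12\Bigl(\Tr H + \sqrt{(\Tr H)^2 - 4\det H}\Bigr),
\end{equation*}
where $\det H = |\det \ee^{-tM_-}|^2 = |\ee^{-t\,\Tr M_-}|^2 = \ee^{-2t}$, because $\Tr M_- = 1-\ii b$ has real part $1$.

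It then remains to compute $\Tr H = \|\ee^{-tM_-}\|_{\mathrm F}^2$. Since $\Tr R = 0$, the cross terms vanish and $\Tr H = \ee^{-t}\bigl(2|\cosh\tfrac{ct}2|^2 + \tfrac{4}{|A|}|\sinh\tfrac{ct}2|^2\,\|R\|_{\mathrm F}^2\bigr)$ with $\|R\|_{\mathrm F}^2 = \tfrac{1+b^2}{2}+2a$. Splitting $c = c_1 + \ii c_2$ yields $|\cosh\tfrac{ct}2|^2 = \tfrac12(\cosh c_1 t + \cos c_2 t)$ and $|\sinh\tfrac{ct}2|^2 = \tfrac12(\cosh c_1 t - \cos c_2 t)$; substituting $1+b^2+4a = 2-A_1$ then gives exactly $\Tr H = \tfrac{2}{|A|}\ee^{-t}\,T$ with $T$ as in \eqref{eq_def_T}. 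Inserting this into the eigenvalue formula, the claimed identity $\|\ee^{-tM_{a,b}}\|^2 = \tfrac{1}{|A|}\ee^{-t}(T+\sqrt S)$ is seen to be equivalent to the purely algebraic relation $S = T^2 - |A|^2$.

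The main obstacle is this last identity, which carries all the weight of the computation. I would verify it by expanding $T^2$ with the double-angle formulas $\cosh^2 = \tfrac12(\cosh 2\,\cdot{}+1)$ and $\cos^2 = \tfrac12(\cos 2\,\cdot{}+1)$, and reducing using the relations from $c^2 = A$, namely $c_1^2 = \tfrac12(|A|+A_1)$, $c_2^2 = \tfrac12(|A|-A_1)$ and $c_1 c_2 = -b$, together with $|A|^2 = A_1^2 + 4b^2$ and $A_1 = 1-b^2-4a$; the term $8a(1-\Re(\cosh ct)) = 8a(1-\cosh c_1 t\cos c_2 t)$ in \eqref{eq_def_S} is what absorbs the cross term $\cosh c_1 t\cos c_2 t$ produced by squaring $T$. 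A convenient consistency check is $t=0$, where $T=|A|$ and $S=0$, so that $\|\ee^{0}\|^2 = 1$ as required.
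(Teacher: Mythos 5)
Your proposal is correct, and it takes a genuinely different route from the paper's. The paper never block-diagonalizes $M_{a,b}$: it computes the full exponential by Lagrange interpolation on the four eigenvalues (which requires them to be distinct, hence $a>0$, $b\neq 0$, cf.\ Remark \ref{rem-distinct}), expands $\ee^{-tM_{a,b}}$ in the structured basis of \eqref{ecriture exp}, forms the $4\times 4$ Gram matrix $\ee^{-tM_{a,b}}\ee^{-tM_{a,b}^*}$, and extracts its top eigenvalue through a separately proved spectral lemma (Proposition \ref{prop-intermediaire}) for matrices in the span of $H_{vv}-H_{xx}$, $K$, $J_J$, $I_4$, before a long simplification (Proposition \ref{prop norme} and the trigonometric identities that follow) yields $T$ and $S$. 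Your unitary splitting $W^*M_{a,b}W=M_+\oplus M_-$ with $M_-=\overline{M_+}$ accomplishes in one stroke what that special matrix class encodes implicitly: conjugating the Gram matrix by $W$ splits it into two complex-conjugate $2\times2$ blocks, which is exactly why its eigenvalues come in pairs in Proposition \ref{prop-intermediaire}. It also replaces the $4\times4$ interpolation by the elementary identity $\ee^{-tR}=\cosh(tc/2)\,I_2-\tfrac{2}{c}\sinh(tc/2)\,R$ for $R^2=\tfrac{c^2}{4}I_2$, and replaces the spectral lemma by the trace--determinant formula for a $2\times2$ Hermitian matrix, with $\det H=\ee^{-2t}$ coming for free from $\Tr M_-=1-\ii b$. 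A side benefit is that your argument needs no distinctness of eigenvalues, so it covers $b=0$ directly, where the paper's interpolation degenerates and the formula must otherwise be recovered by continuity; what the paper's longer route buys is the explicit matrix $\ee^{-tM_{a,b}}$ itself, \eqref{forme1} and \eqref{ecriture exp}, which is of independent interest. Finally, the algebraic identity $S=T^2-|A|^2$ to which you reduce the theorem does hold, exactly along the lines you sketch: writing $X=\cosh c_1t$, $Y=\cos c_2t$, both sides equal
\[
\Bigl(4a+\tfrac12|A|^2\Bigr)(X^2+Y^2)+\tfrac12(2-A_1)|A|\,(X^2-Y^2)-8a\,XY-|A|^2,
\]
the only inputs being $\cosh 2c_1t=2X^2-1$, $\cos 2c_2t=2Y^2-1$, $\Re\cosh ct=XY$, and the relation $(2-A_1)^2-|A|^2=(1+b^2+4a)^2-(1-b^2-4a)^2-4b^2=16a$; this form also makes transparent that $S\geq 0$ and that $S$ vanishes at $t=0$, consistent with the Remark following the theorem.
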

\begin{remarque}
While the expression for $\|\ee^{-tM_{a,b}}\|^2$ is fairly complicated, a closed form for the exponential of a $4\times 4$ matrix is rarely so simple. One sees immediately that $T$ and $S$ may be expanded in series of even powers of $t$ where the coefficient of $t^0$ is $|A|$ in the expansion of $T$ and the coefficient of $t^0$ is zero in the expansion of $S$.
\end{remarque}
We can deduce precise estimates. First, when $ t \to 0^+$, we have a complete asymptotic expansion for the exponential norm of $ M_{a, b} $.
\begin{prop}\label{prop_Taylor}
The norm $\|\ee^{-tM_{a,b}}\|$ admits a complete asymptotic expansion in powers of $t$ as $\langle A \rangle t^2 \to 0^+$ beginning with 
\[
	\|\ee^{-tM_{a,b}}\| = 1 - \frac{a}{12}t^3 + \left(\frac{1}{360}ab^2 + \frac{1}{240}a^2 + \frac{1}{120}a\right)t^5 + \frac{1}{288}a^2 t^6 + \BigO(\langle A\rangle^3 t^7).
\]
\end{prop}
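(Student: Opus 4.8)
The plan is to expand the exact closed form of Theorem~\ref{thm_2,4} in powers of $t$: since $\|\ee^{-tM_{a,b}}\|^2$ is already known explicitly, the proof is essentially a controlled bookkeeping of Taylor coefficients, once the prefactor $1/|A|$ and the square root $\sqrt S$ have been tamed. Throughout I would use $c^2 = A$, which gives $c_1^2 - c_2^2 = A_1$, $2c_1c_2 = A_2$ and $c_1^2 + c_2^2 = |A|$, whence $c_1^2 = \tfrac12(|A|+A_1)$ and $c_2^2 = \tfrac12(|A|-A_1)$. Because every building block of \eqref{eq_def_T} and \eqref{eq_def_S} is one of $\cosh c_1 t$, $\cos c_2 t$, $\cosh 2c_1 t$, $\cos 2c_2 t$ or $\Re(\cosh ct)$, both $T$ and $S$ are even in $t$, which already accounts for the parity noted in the Remark.

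First I would record the two algebraic facts that make all Taylor coefficients polynomial in $(a,b)$. Writing $\cosh ct = \sum_n A^n t^{2n}/(2n)!$ gives $\Re(\cosh ct) = \sum_n \Re(A^n)\,t^{2n}/(2n)!$, with polynomial coefficients. For the remaining blocks I expand $\cosh c_1 t \pm \cos c_2 t$ and their doubles and observe that the coefficient of $t^{2n}/(2n)!$ is $(c_1^2)^n \pm (-1)^n(c_2^2)^n = (c_1^2)^n \pm (-c_2^2)^n$; since $c_1^2$ and $-c_2^2$ are the roots of $X^2 - A_1 X - \tfrac14 A_2^2 = 0$, the symmetric combination is a polynomial in $A_1, A_2^2$, while the antisymmetric one is divisible by $c_1^2 + c_2^2 = |A|$, i.e.\ equals $|A|$ times such a polynomial. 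Consequently $T = |A|\,\tilde T$ with $\tilde T = 1 + \tfrac12 t^2 + \cdots$ a polynomial-coefficient even series.

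The crux is to show that $S$ also carries a clean factor $|A|^2$. Here the key identity, obtained from $\Re(\cosh ct) = \cosh c_1 t\,\cos c_2 t$ together with the duplication formulas $\cosh 2c_1 t = 2\cosh^2 c_1 t - 1$ and $\cos 2c_2 t = 2\cos^2 c_2 t - 1$, is
\[
8a\bigl(1 - \Re(\cosh ct)\bigr) + 2a\bigl(\cosh 2c_1 t + \cos 2c_2 t - 2\bigr) = 4a\,(\cosh c_1 t - \cos c_2 t)^2 .
\]
Splitting the last coefficient of \eqref{eq_def_S} as $2a + \tfrac14|A|^2$ and feeding the $2a$-part into this identity, every surviving summand of $S$ is visibly a multiple of $|A|^2$: the square on the right is $|A|^2$ times a polynomial series (because $\cosh c_1 t - \cos c_2 t$ is antisymmetric, hence divisible by $|A|$), while $\tfrac14(2-A_1)|A|(\cosh 2c_1 t - \cos 2c_2 t)$ and $\tfrac14|A|^2(\cosh 2c_1 t + \cos 2c_2 t - 2)$ each plainly contain $|A|^2$. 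This yields $S = |A|^2 G$ with $G = t^2 + \tfrac{1-a}{3}t^4 + \cdots$ a polynomial-coefficient even series beginning at $t^2$. I expect this factorisation — and in particular the completion-of-square identity above — to be the main obstacle, since it is exactly what forces the genuinely irrational quantity $|A|$ to cancel from the final expansion; a direct term-by-term expansion would instead have to rely on order-by-order cancellations that are far less transparent.

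With both factorisations in hand the remainder is routine. The prefactor cancels, giving $\|\ee^{-tM_{a,b}}\|^2 = \ee^{-t}\bigl(\tilde T + \sqrt G\,\bigr)$, and since $G = t^2\bigl(1 + \BigO(\langle A\rangle t^2)\bigr)$ the binomial series yields $\sqrt G = t\bigl(1 + \BigO(\langle A\rangle t^2)\bigr)$, a polynomial-coefficient series beginning with $t$. Collecting orders, the $t^1$ and $t^2$ contributions cancel (forced by $\tilde T = 1 + \tfrac12 t^2 + \cdots$ and $\sqrt G = t + \cdots$ against $\ee^{-t} = 1 - t + \tfrac12 t^2 - \cdots$), so $\|\ee^{-tM_{a,b}}\|^2 = 1 - \tfrac{a}{6}t^3 + \cdots$. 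Finally I would expand the outer square root $\|\ee^{-tM_{a,b}}\| = \bigl(1 + \BigO(t^3)\bigr)^{1/2}$ and push the computation to order $t^6$ to read off the stated coefficients. The tail is controlled by noting that each extra factor $t^2$ enters weighted by the frequency $|c|^2 = |A|$, so that the natural expansion variable is $\langle A\rangle t^2$ and the first omitted (odd) term is $\BigO(\langle A\rangle^3 t^7)$, exactly as claimed.
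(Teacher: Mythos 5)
Your proposal is correct and shares the paper's overall architecture: start from Theorem \ref{thm_2,4}, show that $T/|A|$ and $S/|A|^2$ are even power series in $t$ whose coefficients are polynomials in $a$ and $b^2$, then multiply by $\ee^{-t}$, expand the two square roots, and collect coefficients. Where you genuinely diverge is in the proof of the key factorization $S = |A|^2 G$. The paper (Proposition \ref{prop_developpement}) works coefficient by coefficient: it writes $(2k)!\,|A|^2\sigma_k$ in terms of $B_{\pm,k} = (c_1^2)^k \pm (-1)^k (c_2^2)^k$ and $\Re(A^k)$, expands both binomially in $A_1$ and $|A|^2$, and observes that the coefficients of $A_1^k$ (each equal to $2^{k-1}$) cancel between $8a\,\Re(A^k)$ and $8a\cdot 2^{2(k-1)}B_{+,k}$, which is what makes $|A|^2\sigma_k$ divisible by $|A|^2$. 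You instead complete the square: your identity
\[
8a\bigl(1-\Re(\cosh ct)\bigr) + 2a\bigl(\cosh 2c_1t + \cos 2c_2t - 2\bigr) = 4a\,(\cosh c_1t - \cos c_2t)^2
\]
is correct (it in fact recovers the intermediate form of $S$ that appears in the paper's own proof of Theorem \ref{thm_2,4}, before the duplication formulas were applied to reach \eqref{eq_def_S}), and it reduces the whole factorization to the single structural fact that the antisymmetric series $\cosh c_1t - \cos c_2t$ is divisible by $|A|$. This is cleaner and more conceptual: the divisibility is visible at the level of functions rather than emerging from term-by-term binomial cancellations, exactly the advantage you anticipated. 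Your packaging of $B_{\pm,k}$ as symmetric/antisymmetric functions of the roots of $X^2 - A_1X - \tfrac14 A_2^2$ is equivalent to the paper's explicit formulas $2^{-k}\bigl((A_1+|A|)^k \pm (A_1-|A|)^k\bigr)$. The only thing you leave undone is the actual extraction of the stated coefficients: the paper computes $\tau_0,\dots,\tau_3$ and $\sigma_1,\sigma_2,\sigma_3$ explicitly and carries the bookkeeping against $\ee^{-t}$ through order $t^6$, followed by the outer square root; since your structural claims are right and your intermediate checkpoint $\|\ee^{-tM_{a,b}}\|^2 = 1 - \tfrac{a}{6}t^3 + \cdots$ matches the paper, this is a deferral of routine computation rather than a gap.
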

Second, as $|b| \to \infty$ in such a way that $\langle a\rangle b^{-2} \to 0$, we have uniform estimates on the difference between $\ee^{(1-c_1)t}\|\ee^{-tM_{a,b}}\|^2$ and $1$ for all $t \geq 0$.
\begin{prop}\label{prop_large_b}
There exists some $C > 0$ such that, if $a > 0$, $b \neq 0$ and $\langle a \rangle b^{-2} < \frac{1}{C}$, then
\[
	\left|\ee^{(1-c_1)t}\|\ee^{-tM_{a,b}}\|^2-1\right| \leq C\langle a\rangle^2 b^{-4}.
\]
\end{prop}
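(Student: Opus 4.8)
The plan is to work directly from the closed expression in Theorem~\ref{thm_2,4}. Setting $F(t) := \ee^{(1-c_1)t}\|\ee^{-tM_{a,b}}\|^2$, that theorem gives
\[
	F(t) = \frac{\ee^{-c_1 t}}{|A|}\left(T + \sqrt{S}\right).
\]
Since $\ee^{-c_1 t}\cosh c_1 t = \tfrac12(1+\ee^{-2c_1 t})$ and $\ee^{-c_1 t}\sinh c_1 t = \tfrac12(1-\ee^{-2c_1 t})$, the goal is to show that $\tfrac{1}{|A|}\ee^{-c_1 t}T$ and $\tfrac{1}{|A|}\ee^{-c_1 t}\sqrt{S}$ equal these two half-sums, each up to an error within $\BigO(\langle a\rangle^2 b^{-4})$ uniformly in $t\geq 0$, so that the two main terms add to $1$. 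The preliminary ingredient is the asymptotics of the spectral data. Using $c_1^2 + c_2^2 = |A|$, $c_1^2 - c_2^2 = A_1$, and the exact identity $(b^2+4a+1)^2 - |A|^2 = 16a$, I would derive $1 - c_1^2 = \tfrac{8a}{(b^2+4a+1)+|A|}$, whence $|A| = b^2 + \BigO(\langle a\rangle)$, $c_2^2 = |A| - c_1^2 = b^2 + \BigO(\langle a\rangle)$, and $1 - c_1 = \BigO(\langle a\rangle b^{-2})$, the implied constants being absolute once $\langle a\rangle b^{-2}$ is small.

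For the $T$-part I would first pass to spectral variables. Since $|A| - A_1 + 2 = 2(c_2^2+1)$ and $|A| + A_1 - 2 = 2(c_1^2-1)$,
\[
	T = (c_2^2+1)\cosh c_1 t + (c_1^2-1)\cos c_2 t.
\]
Dividing by $|A|$ and multiplying by $\ee^{-c_1 t}$ produces $\tfrac{c_2^2+1}{|A|}\cdot\tfrac12(1+\ee^{-2c_1 t})$ plus $\tfrac{c_1^2-1}{|A|}\ee^{-c_1 t}\cos c_2 t$. Because $\tfrac{c_2^2+1}{|A|} = 1 + \tfrac{1-c_1^2}{|A|}$ and $\tfrac{c_1^2-1}{|A|} = -\tfrac{1-c_1^2}{|A|}$ are $1 + \BigO(\langle a\rangle b^{-4})$ and $\BigO(\langle a\rangle b^{-4})$, and since $\ee^{-c_1 t}\cosh c_1 t$ and $\ee^{-c_1 t}|\cos c_2 t|$ are bounded by $1$ for $t\geq 0$, this contribution is $\tfrac12(1+\ee^{-2c_1 t})$ up to $\BigO(\langle a\rangle b^{-4})$, comfortably inside the claimed bound.

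The substance is the $\sqrt{S}$-part. Substituting $\cosh 2c_1 t = 1 + 2\sinh^2 c_1 t$ and $\cos 2c_2 t = 1 - 2\sin^2 c_2 t$ into \eqref{eq_def_S} and collecting terms (using $|A|(1+c_2^2) - |A|^2 = \rho$ with $\rho := |A|(1-c_1^2)$), I would reduce $S$ to the transparent form
\[
	S = |A|^2 \sinh^2 c_1 t + \rho\,(\sinh^2 c_1 t + \sin^2 c_2 t) + 4a\,(\cosh c_1 t - \cos c_2 t)^2.
\]
Writing $u = \sinh c_1 t$, $w = \sqrt S/|A|$ and $\delta = w^2 - u^2 = \tfrac{1}{|A|^2}\big[\rho(\sinh^2 c_1 t + \sin^2 c_2 t) + 4a(\cosh c_1 t - \cos c_2 t)^2\big]\geq 0$, the identity $\sqrt{X}-\sqrt{Y} = (X-Y)/(\sqrt X + \sqrt Y)$ gives
\[
	\frac{1}{|A|}\ee^{-c_1 t}\sqrt{S} - \tfrac12(1-\ee^{-2c_1 t}) = \ee^{-c_1 t}\,\frac{\delta}{w+u},
\]
so the whole problem collapses to the uniform estimation of this single nonnegative quantity.

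This last step is where I expect the main obstacle to be, and the delicate region is the intermediate time-scale $t \sim 1/|b|$. There $\sin^2 c_2 t$ and $(\cosh c_1 t - \cos c_2 t)^2$ are of order $1$, so $\delta$ is of order $\langle a\rangle\,|A|^{-2}$, while $\sinh c_1 t$ is only of order $1/|b|$; bounding the denominator crudely by $2\sinh c_1 t$ is then far too lossy. The plan is to split the time axis: for large $c_1 t$ the prefactor $\ee^{-c_1 t}$ together with $w+u\geq 2\sinh c_1 t$ yields decay; for small $c_1 t$ one matches the Taylor behaviour already isolated in Proposition~\ref{prop_Taylor}; and in the intermediate band one must lower-bound $w+u$ by combining $w+u\geq 2\sinh c_1 t$ with $w+u\geq w\geq\sqrt{\delta}$ (from $w^2 = u^2 + \delta\geq\delta$), handling the $\sinh^2 c_1 t$ and the oscillatory $\sin^2 c_2 t$, $(\cosh-\cos)^2$ contributions to $\delta$ separately. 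It is the interplay between the smallness $\delta = \BigO(\langle a\rangle\,|A|^{-2})$ and this denominator that fixes the final power of $b$, and carrying the constants honestly through the intermediate band is where the real work lies; the boundedness of $\ee^{-c_1 t}\cosh c_1 t$ and the sign $\rho>0$ (which guarantees $S\geq |A|^2\sinh^2 c_1 t$, hence $w\geq u$) are the structural facts that keep every error of the intended order.
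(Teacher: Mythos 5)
Your algebraic reduction is correct and is, modulo notation, the same route the paper takes: the paper also starts from Theorem \ref{thm_2,4}, isolates the $T$-part, and reduces the $\sqrt{S}$-part to the identity $-x+\sqrt{x^2+r}=r/\bigl(x+\sqrt{x^2+r}\bigr)$ with $x=1-\ee^{-2c_1t}$, which is exactly your $\ee^{-c_1t}\,\delta/(w+u)$; your rewritten form of $S$ (with $\rho=|A|(1-c_1^2)$) agrees with \eqref{eq_def_S} after using $c_1^2+c_2^2=|A|$ and $c_1^2-c_2^2=A_1$. But your proof stops exactly where you say "the real work lies": no bound for $\ee^{-c_1t}\,\delta/(w+u)$ on the intermediate band is ever derived, only a plan to split the time axis and "carry the constants honestly." Since everything before that point is exact algebra, this missing estimate \emph{is} the proposition; as written, the proposal is not a proof.

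Moreover, the plan cannot be completed in the form you (and the statement) aim for, because the claimed bound fails at precisely the time scale you flagged. Take $t_*=\pi/|c_2|\approx\pi/|b|$: then $\sin c_2t_*=0$ and $\cos c_2t_*=-1$, so $\delta\approx 16a/|A|^2$ while $u=\sinh c_1t_*\approx\pi/|b|\gg\sqrt{\delta}$, whence $\ee^{-c_1t_*}\delta/(w+u)\approx\frac{8a}{\pi}|b|^{-3}$; since the $T$-part error is nonnegative there, one gets $\ee^{(1-c_1)t_*}\|\ee^{-t_*M_{a,b}}\|^2-1\geq \frac{8a}{\pi}|b|^{-3}(1+o(1))$, which exceeds $C\langle a\rangle^2 b^{-4}$ for fixed $a>0$ and $|b|$ large. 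No time-splitting can beat an exact identity, so the stated power of $b$ is unattainable. This is consistent with the paper itself: at this very step the paper only bounds $\sup_{x\geq 0}\bigl|-x+\sqrt{x^2+r}\bigr|=|r|^{1/2}$ and concludes $\bigl|\ee^{(1-c_1)t}\|\ee^{-tM_{a,b}}\|^2-1\bigr|=\BigO(\langle a\rangle b^{-2})$, a weaker power than the proposition announces — the mismatch is in the paper, not something your scheme could have repaired. What your setup does give in one line is that same weaker bound: from $w+u\geq w\geq\sqrt{\delta}$ and $\delta\leq 32a\cosh^2(c_1t)/|A|^2$ (using $\rho\leq 8a$), you get $\ee^{-c_1t}\delta/(w+u)\leq\ee^{-c_1t}\sqrt{\delta}\leq\sqrt{32a}/|A|=\BigO\bigl(\sqrt{\langle a\rangle}\,b^{-2}\bigr)$ uniformly in $t\geq 0$, which recovers (indeed slightly sharpens) what the paper actually proves.
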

  This measures in a sense how non-self-adjoint $M_{a,b}$ is: since $\frac{1}{2}(1-c_1)$ is the spectral abscissa $\min\{\Re \lambda \::\: \lambda \in \opnm{Spec} M_{a,b}\}$, if $M$ were self-adjoint one would have the equality $\|\ee^{-tM_{a,b}}\|^2 = \ee^{-(1-c_1)t}$ for all $t \geq 0$.
  
   Finally, we identify the exact value of $\lim\limits_{t \to \infty}\ee^{(1-c_1)t}\|\ee^{-tM_{a,b}}\|^2$ which is related to the norms of spectral projections of $M_{a,b}$.
\begin{prop}\label{prop_long_t}
For $a, b > 0$. There exists some $C > 0$ such that if
\[
	E(t) = \ee^{-2c_1t} + \langle a\rangle^2 b^{-4}\ee^{-c_1t} \leq \frac{1}{C},
\]
then
\[
	\left|\left( \frac{c_1^2+c_2^2}{c_2^2+1}\right)^{1/2}\ee^{\frac{t(1-c_1)}{2}}\|\ee^{-tM_{a,b}}\|-1\right| \leq CE(t).
\]
\end{prop}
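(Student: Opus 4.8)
The plan is to start from the exact formula of Theorem~\ref{thm_2,4} and reduce the statement to a bound on $|Q^2-1|$, where
\[
Q := \left(\frac{c_1^2+c_2^2}{c_2^2+1}\right)^{1/2}\ee^{t(1-c_1)/2}\|\ee^{-tM_{a,b}}\|.
\]
Since $Q\ge 0$ one has $|Q-1|\le |Q^2-1|$, so it suffices to control $Q^2$. Using $c_1^2+c_2^2=|A|$ together with $\|\ee^{-tM_{a,b}}\|^2=\frac{1}{|A|}\ee^{-t}(T+\sqrt S)$, the factors of $|A|$ cancel and, writing $\rho:=\ee^{-c_1t}$, I obtain the clean expression
\[
Q^2=\frac{\rho\,(T+\sqrt S)}{c_2^2+1}.
\]
Because $T$ and $S$ involve only $\cosh(kc_1t)$ and $\cos(kc_2t)$, I will rewrite every hyperbolic term through $\cosh(k c_1 t)=\tfrac12\rho^{-k}(1+\rho^{2k})$, turning $\rho T$ and $\rho^2 S$ into polynomials in $\rho$ whose coefficients are explicit functions of $c_1,c_2,|A|,a$.

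The algebraic heart of the proof is a set of exact identities coming from $c^2=A$. Taking real and imaginary parts of $c^2=A$ and using $|c|^2=|A|$ gives $c_1^2-c_2^2=A_1$, $2c_1c_2=A_2=-2b$ and $c_1^2+c_2^2=|A|$. From these I derive the two identities that make everything collapse:
\[
(c_2^2+1)(1-c_1^2)=4a,\qquad K:=\tfrac14|A|(c_2^2+1)+a=\tfrac14(c_2^2+1)^2,
\]
the second being equivalent to $\sqrt K=\tfrac12(c_2^2+1)$ and following from the first. Here $K$ is exactly the coefficient of $\rho^{-2}$ in $S$, i.e. the leading term of $\rho^2 S$. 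I also record the a priori bound $c_2^2+1\ge b^2$ (since $c_2^2=\tfrac12(|A|-A_1)\ge b^2+4a-1$ in the relevant regime $A_1<0$), so that the hypothesis $\langle a\rangle^2b^{-4}\le E(t)\le 1/C$ forces $a/b^2$ small and hence $c_1$ close to $1$.

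Next I expand. One finds $\rho T=\tfrac12(c_2^2+1)(1+\rho^2)+(c_1^2-1)\rho\cos c_2t$ and $\rho^2 S=K-4a\cos(c_2t)\,\rho+S_0\rho^2+\BigO(a\rho^3+K\rho^4)$ for an explicit $S_0$. The hypothesis $E(t)\le 1/C$ makes $u:=\rho^2S/K-1$ small — this is precisely what the two terms of $E(t)$ control, since $\tfrac{4a}{K}\rho\lesssim \langle a\rangle^2b^{-4}\rho$ and $\tfrac{|S_0|}{K}\rho^2\lesssim \rho^2$ — so I may write $\rho\sqrt S=\sqrt K\,(1+u)^{1/2}$ and Taylor-expand with remainder. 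The $\rho^0$ terms of $\rho(T+\sqrt S)$ then add up to $\tfrac12(c_2^2+1)+\sqrt K=c_2^2+1$ by the identity above, so after dividing by $c_2^2+1$ the constant term of $Q^2$ is exactly $1$. Collecting the $\rho^1$ term and using $c_1^2-1=-\tfrac{4a}{c_2^2+1}$ and $\tfrac{2a}{\sqrt K}=\tfrac{4a}{c_2^2+1}$, the first correction is $-\tfrac{8a\cos(c_2t)}{(c_2^2+1)^2}\rho=\BigO(\langle a\rangle^2b^{-4}\rho)$ by $c_2^2+1\ge b^2$; the higher terms are handled the same way and come out $\BigO(ab^{-4}\rho^2)+\BigO(\rho^4)$. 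Altogether $|Q^2-1|\le C\,E(t)$, whence $|Q-1|\le|Q^2-1|\le C E(t)$.

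The main obstacle is not any single estimate but the verification that the leading constant is exactly $1$: everything hinges on the algebraic miracle $\sqrt K=\tfrac12(c_2^2+1)$, equivalently $(c_2^2+1)(1-c_1^2)=4a$, which forces the $\BigO(1)$ part of $Q^2$ to cancel to the exact value $1$ rather than merely to something tending to $1$. Once this is in hand, the remaining difficulty is pure bookkeeping: keeping the square-root expansion's remainder under control (for which the smallness of $u$, guaranteed by $E(t)\le 1/C$, is essential) and checking that each surviving coefficient is dominated by one of the two terms of $E(t)$ via the uniform bound $c_2^2+1\ge b^2$.
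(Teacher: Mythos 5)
Your proposal is correct in substance and follows essentially the same route as the paper's proof: you start from Theorem \ref{thm_2,4}, expand $\ee^{-c_1t}T$ and $\ee^{-2c_1t}S$ in powers of $\rho=\ee^{-c_1t}$, and your two ``miracle'' identities, $4a=(c_2^2+1)(1-c_1^2)$ and $\sqrt K=\tfrac12(c_2^2+1)$, are exactly the content of the paper's Lemma \ref{lem_simp_1.3} (your $K$ equals $|A|^2R_0$ there, and $|A|+2-A_1=2(c_2^2+1)$ is what makes the $\rho^0$ terms sum to $c_2^2+1$). The only organizational difference is that you normalize by the limiting constant $R_1=(c_2^2+1)/|A|$ from the start and bound $|Q^2-1|$ via $|Q-1|\le|Q^2-1|$, whereas the paper first proves the estimate with the unsimplified constants $R_0,R_1$ and only afterwards simplifies them; your coefficient computations ($\rho T$, the constant term $K$, the $\rho^1$ coefficient $-8a\cos(c_2t)/(c_2^2+1)^2$) all check out.

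Two small repairs are needed, both harmless. First, your claim that the hypothesis yields $\langle a\rangle^2b^{-4}\le E(t)\le 1/C$, hence that $a/b^2$ is small and $c_1$ is close to $1$, is false: since $\ee^{-c_1t}\le 1$ one only gets $\langle a\rangle^2b^{-4}\ee^{-c_1t}\le E(t)$, and $E(t)$ can be arbitrarily small while $\langle a\rangle^2b^{-4}$ is huge (take $b$ small, $t$ large). Fortunately nothing in your estimates actually uses this remark: smallness of $\rho$ follows directly from $\rho^2\le E(t)\le 1/C$, and smallness of $u$ from $|u|\lesssim E(t)$. Second, and relatedly, you justify the genuinely needed bound $c_2^2+1\ge b^2$ only ``in the relevant regime $A_1<0$'', a restriction the false remark was presumably meant to license; but the bound is unconditional, since if $A_1\ge 0$ then $b^2\le 1-4a<1\le c_2^2+1$. (Also, in your final bookkeeping the term $\tfrac12\rho^2$ coming from $\rho T/(c_2^2+1)$ should appear as a plain $\BigO(\rho^2)$, not $\BigO(ab^{-4}\rho^2)+\BigO(\rho^4)$; it is still dominated by $E(t)\ge\rho^2$, so the conclusion stands.) With these lines corrected, your argument is complete.
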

\begin{remarque}
We note that according to the previous Proposition, we have
$$  \lim\limits_{t\to +\infty} \ee^{\frac{t(1-c_1)}{2}}\|\ee^{-tM_{a,b}}\|=\sqrt{\frac{c_2^2+1}{c_1^2+c_2^2}} =:\sqrt{R_1},$$
and this last constant coincides with the norm of any spectral projector $ \Pi_j $ associated with $ \lambda_j  \in \Sigma(M_{a,b})$
$$ \sqrt{R_1}=\Vert \Pi_1\Vert=\Vert\Pi_2\Vert=\Vert \Pi_3\Vert= \Vert \Pi_4\Vert.$$
These norms can be calculated as $$\Vert \Pi_j\Vert=\displaystyle \frac{\Vert v_j\Vert\Vert \omega_j\Vert}{\langle v_j, \omega_j\rangle}=\frac{a+\vert \lambda_j^2\vert}{\vert a-\lambda_j^2\vert}, \,\forall j=1,\dots ,4$$
where $M_{a,b}\,v_j=\lambda_j v_j$ and $M_{a,b}^*\,\omega_j=\overline{\lambda}_j\omega_j$. 
\end{remarque}
We conclude this part with a brief review of the literature related to the analysis of quadratic operators, more specifically of the Kramers-Fokker-Planck (KFP) type. In recent years, several works have been focused on this operator with diversified approaches. Numerous authors have been interested in proving maximal (or global) estimates to deduce the compactness of the resolvent of the (KFP) operator's in order to address the question of returning to equilibrium.
F. H\'erau and F. Nier in the article \cite{herau2004isotropic} put the links between the (KFP) operator with a confining potential and the associated Witten Laplacian operator. Then, in the book of B. ~ Helffer and F. ~ Nier \cite{nier2005hypoelliptic}, these works have been completed and explained in a general way, and we refer more specifically to the section $ 5.5 $ for a specific study of the operator of (KFP) with a quadratic potential where they calculated the exact spectrum of the quadratic operator, where the calculations are based on the book of H. ~ Risken \cite{risken1998fokker}.
More recently, M. ~ Ben Said {\it et al.} in \cite{said2018quaternionic} have studied global estimates for model operators of (KFP) with polynomial potentials of degree less than or equal to $ 2 $, then M. ~ Ben Said in \cite{said2018global} provided accurate global subelliptic estimates for (KFP) operators on a more general case involving a certain class of polynomials of degree greater than $ 2 $.

\textbf{Plan of the article:} This article is organized as follows. In section 2, we will prove Theorem \ref{thm q1}. Then, in section 3 we will determine the explicit form of the matrix exponential $ \ee^{- tM_{a, b}} $ by applying the Lagrange interpolation method and we will thus give the proof of Theorem \ref{thm_2,4}. Finally, section 4 is dedicated to the proof of Propositions \ref{prop_Taylor}, \ref{prop_large_b} and \ref{prop_long_t}.
\section{Spectrum of the operator $ P_ {a, b} $}
\subsection{Preliminary.}
To determine the spectrum of operators, acting on $ L^2 (\mathbb{R}^n) $ and decomposable into annihilation and creation operators $ A_j = \partial_{x_{j}} + x_j $ and $ A_{j}^{*} = - \partial_{x_{j}} + x_j $ as follows :
  $$P=\frac{1}{2}\sum_{j,k=1}^{n}\, m_{jk}A_{k}^{*} A_{j}\,,$$
    where $M = (m_{jk} )_{j,k=1}^{n} \in M_{n\times n} (\mathbb{C})$,   
    it is equivalent to determine the spectrum of the matrix $ M $ by applying the following result which gives the link between the spectrum of the operator and its associated matrix :
    \begin{cor}[Corollary 2.3 in \cite{aleman2014singular}]\label{cor q1}
    Let $ \{\lambda_{j} \}_{j = 1}^{n} $ be the eigenvalues of the matrix $ M $, repeated by their multiplicity. Let $ G $ such that $ GMG^{- 1} $ is in Jordan's normal form, thus having diagonal entries $ \lambda_{1}, \dots, \lambda_{n} $. So
    $$\{\mathcal{B}^{*}((Gz)^{\alpha})\}_{\alpha \in  \mathbb{N}^{n}} $$
    form a system of eigenfunctions of the operator $ P $ associated with generalized eigenvalues
    $$\lambda_{\alpha}=\sum_{j=1}^{n}\,\lambda_{j}\alpha_{j},\quad \forall \alpha\in \Bbb N^{n}\,, $$
    where $ \mathcal{B} $ denotes the Bargmann transform
which is defined as follows:
$$\mathcal{B}f(z)=\pi^{-3n/4}\, \int_{\mathbb{R}^{n}}\,f(x)\,\ee^{\sqrt{2}xz-x^{2}/2-z^{2}/2}\,dx \,.$$
 \end{cor}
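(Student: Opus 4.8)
The plan is to transport the whole problem to the Bargmann--Fock side, where $P$ becomes a first-order differential operator (a linear vector field) whose generalized eigenfunctions are visibly monomials. Since the Bargmann transform $\mathcal{B}$ is unitary from $L^2(\mathbb{R}^n)$ onto the Fock space $\mathcal{F}$ of entire functions, we have $\mathcal{B}^* = \mathcal{B}^{-1}$, and it suffices to diagonalize $\widetilde{P} := \mathcal{B}P\mathcal{B}^{-1}$ and pull the eigenfunctions back. First I would establish the two intertwining identities
\[
\mathcal{B}A_j\mathcal{B}^{-1} = \sqrt{2}\,\partial_{z_j}, \qquad \mathcal{B}A_j^*\mathcal{B}^{-1} = \sqrt{2}\,z_j,
\]
the second being multiplication by $\sqrt{2}\,z_j$. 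Both follow from a single integration by parts against the kernel $\ee^{K}$, $K = \sqrt{2}\,x\cdot z - x^2/2 - z^2/2$, using $\partial_{x_j}\ee^{K} = (\sqrt{2}\,z_j - x_j)\ee^{K}$ and $\partial_{z_j}\ee^{K} = (\sqrt{2}\,x_j - z_j)\ee^{K}$; the overall normalization constant plays no role here.

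Substituting these into $P = \frac12\sum_{j,k}m_{jk}A_k^*A_j$ and noting that the factors of $\sqrt{2}$ combine with the $\tfrac12$ to leave
\[
\widetilde{P} = \mathcal{B}P\mathcal{B}^{-1} = \sum_{j,k=1}^n m_{jk}\, z_k\,\partial_{z_j},
\]
which is a derivation on polynomials acting on the coordinate functions exactly by $M$: indeed $\widetilde{P}(z_i) = \sum_{j,k} m_{jk}\, z_k\,\partial_{z_j}z_i = (Mz)_i$. Passing to the Jordan coordinates $w = Gz$, linearity gives $\widetilde{P}(w_i) = \sum_k G_{ik}(Mz)_k = (GMz)_i = (JGz)_i = (Jw)_i$, where $J = GMG^{-1}$ is the Jordan form with diagonal entries $\lambda_1,\dots,\lambda_n$.

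Next I would apply the Leibniz rule for the vector field $\widetilde{P}$ to the monomials $w^\alpha = (Gz)^\alpha$. When $M$ is diagonalizable, $\widetilde{P}(w_i) = \lambda_i w_i$, so
\[
\widetilde{P}(w^\alpha) = \sum_i \alpha_i\, w_i^{-1}w^\alpha\,\widetilde{P}(w_i) = \Big(\sum_{j=1}^n \lambda_j \alpha_j\Big) w^\alpha = \lambda_\alpha\, w^\alpha,
\]
so each $w^\alpha$ is an eigenfunction with eigenvalue $\lambda_\alpha$. In general the superdiagonal entries of $J$ contribute extra terms $w^{\alpha - e_i + e_{i+1}}$ of the same total degree; since $\widetilde{P}$ preserves each finite-dimensional space of homogeneous polynomials of degree $d = |\alpha|$, in which $\{w^\alpha\}_{|\alpha|=d}$ is a basis, one orders the multi-indices compatibly with the Jordan blocks to make $\widetilde{P}$ upper triangular with diagonal entries $\lambda_\alpha$. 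This exhibits the $w^\alpha$ as a generalized eigenbasis with generalized eigenvalues $\lambda_\alpha = \sum_j \lambda_j \alpha_j$; completeness holds because $G$ is invertible, so $\{(Gz)^\alpha\}$ is again a basis of the polynomials, which are dense in $\mathcal{F}$. Pulling back via $\mathcal{B}^* = \mathcal{B}^{-1}$ and using $P\mathcal{B}^* = \mathcal{B}^*\widetilde{P}$ then yields that $\{\mathcal{B}^*((Gz)^\alpha)\}_{\alpha\in\mathbb{N}^n}$ is a system of (generalized) eigenfunctions of $P$ with generalized eigenvalues $\lambda_\alpha$.

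The hard part will be the non-diagonalizable case: making precise the ordering of multi-indices that triangularizes $\widetilde{P}$ on each homogeneous component, and checking that the resulting generalized-eigenfunction structure on $\mathcal{F}$ transfers faithfully to $P$ on $L^2$ (in particular that $\mathcal{B}^*$ carries the polynomial monomials into the domain of the closed realization of $P$ and intertwines the generalized spectral decomposition). The diagonalizable case, which already covers the application to $M_{a,b}$ whenever its four eigenvalues are distinct, is entirely elementary once the two intertwining identities are in hand.
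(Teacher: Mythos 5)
The paper itself contains no proof of this corollary: it is imported verbatim from \cite[Corollary 2.3]{aleman2014singular}, whose proof proceeds through the Bargmann/Fock model following Sj\"ostrand, and your proposal correctly reconstructs essentially that argument — the intertwinings $\mathcal{B}A_j\mathcal{B}^{-1}=\sqrt{2}\,\partial_{z_j}$ and $\mathcal{B}A_j^*\mathcal{B}^{-1}=\sqrt{2}\,z_j$ check out, and your observation that the normal ordering (creation factors to the left) is what leaves the pure vector field $\sum_{j,k}m_{jk}z_k\partial_{z_j}$ with no zeroth-order (trace) term is precisely the point that makes the statement clean. The one step you flag as open, triangularization in the non-diagonalizable case, closes exactly as you guess: within each homogeneous degree $|\alpha|=d$ order the monomials by the weight $\sum_j j\alpha_j$; the superdiagonal of $J$ sends $w^\alpha$ to monomials $w^{\alpha-e_i+e_{i+1}}$ of strictly larger weight and, since $\lambda_i=\lambda_{i+1}$ within a Jordan block, of the same $\lambda_\alpha$, so $\widetilde{P}$ is upper triangular on that basis with diagonal entries $\lambda_\alpha$. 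The remaining domain point is also mild: $\mathcal{B}^*$ carries polynomials to Hermite-type functions in $\mathscr{S}(\Bbb R^n)$, which lie in the domain of the closed realization of $P$, so the intertwining transfers the generalized eigenfunctions as claimed; for the application in this paper (where $a>0$, $b\neq 0$ make the four eigenvalues of $M_{a,b}$ distinct, cf.\ Remark \ref{rem-distinct}) your elementary diagonalizable case already suffices.
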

 We note that A.~Aleman and J.~Viola in the article \cite{aleman2014singular} have shown the previous Corollary by following essentially the work of J.~Sj\"ostrand in \cite[Theorem 3.5]{sjostrand1974parametrices}. (see also \cite[Lemma 4.1]{hitrik2011resolvent} and \cite{hitrik2009spectra}).
 \subsection{Proof of Theorem \ref{thm q1}.}
Note that our operator is decomposable into annihilation and creation operators in \eqref{expression op-1}. According to Corollary \ref{cor q1}, one can determine the spectrum of $ P_{a, b} $ as functions of the elements of the spectrum of its associated matrix $ M_{a, b} $. First, we are interested in looking for the spectrum of the matrix $ M_{a, b} $. 
 \begin{lem}\label{lem q1}
 Let $ a> 0 $ and $ b \in \mathbb{R} $. The spectrum of the matrix $ M_{a, b} $ is constituted by
 \begin{align}
&\lambda_1=-\frac{1}{2} \sqrt{-b^2-2 \ii b-4 a+1}-\frac{\ii b}{2}+\frac{1}{2},
&& \lambda_2=\frac{1}{2} \sqrt{-b^2-2 \ii b-4 a+1}-\frac{\ii b}{2}+\frac{1}{2}\label{2}\\
&\lambda_3=-\frac{1}{2} \sqrt{-b^2+2 \ii b-4 a+1}+\frac{\ii b}{2}+\frac{1}{2}, &&
\lambda_4=\frac{1}{2} \sqrt{-b^2+2 \ii b-4 a+1}+\frac{\ii b}{2}+\frac{1}{2}.\label{3} 
\end{align}
 \end{lem}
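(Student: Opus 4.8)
The plan is to compute the characteristic polynomial of $M_{a,b}$ directly, exploiting its $2\times 2$ block structure, and then to factor it into two quadratics whose roots are exactly the four claimed values.

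First I would observe that, writing $I$ for the $2\times 2$ identity and $J = \begin{pmatrix} 0 & 1 \\ -1 & 0\end{pmatrix}$, the matrix takes the block form
\[
M_{a,b} = \begin{pmatrix} I + bJ & \sqrt{a}\,I \\ -\sqrt{a}\,I & 0 \end{pmatrix},
\qquad
M_{a,b} - \lambda I_4 = \begin{pmatrix} (1-\lambda)I + bJ & \sqrt{a}\,I \\ -\sqrt{a}\,I & -\lambda I \end{pmatrix}.
\]
The two lower blocks $-\sqrt{a}\,I$ and $-\lambda I$ are scalar multiples of the identity and hence commute, so the standard identity for the determinant of a block matrix with commuting lower blocks applies and gives
\[
\det(M_{a,b}-\lambda I_4) = \det\big( ((1-\lambda)I + bJ)(-\lambda I) - (\sqrt{a}\,I)(-\sqrt{a}\,I)\big).
\]

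Next I would simplify the argument to $(\lambda^2 - \lambda + a)I - \lambda b\,J$ and compute its $2\times 2$ determinant using $\det(\alpha I - \beta J) = \alpha^2 + \beta^2$, obtaining
\[
\det(M_{a,b}-\lambda I_4) = (\lambda^2 - \lambda + a)^2 + b^2\lambda^2.
\]
The key algebraic step is then to recognize this quartic as a sum of two squares and factor it over $\mathbb{C}$:
\[
(\lambda^2 - \lambda + a)^2 + (b\lambda)^2 = \big(\lambda^2 - (1-\ii b)\lambda + a\big)\big(\lambda^2 - (1+\ii b)\lambda + a\big).
\]

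Finally, I would apply the quadratic formula to each factor. The roots of $\lambda^2 - (1-\ii b)\lambda + a$ are $\tfrac12(1 - \ii b) \pm \tfrac12\sqrt{(1-\ii b)^2 - 4a}$, and since $(1-\ii b)^2 - 4a = -b^2 - 2\ii b - 4a + 1$ these are precisely $\lambda_1$ and $\lambda_2$; the roots of the conjugate factor, whose discriminant is $-b^2 + 2\ii b - 4a + 1$, are $\lambda_3$ and $\lambda_4$. The choice of branch of the complex square root is immaterial, since the $\pm$ exhausts both roots of each quadratic. The main point requiring care is really just the recognition of the sum-of-squares factorization (which is what produces the stated closed form); the block-determinant reduction can, if preferred, be replaced by a routine cofactor expansion of the $4\times 4$ determinant along the last two rows, after which the remaining computation and factorization are elementary.
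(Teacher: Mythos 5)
Your proof is correct, and it takes a genuinely different route from the paper's. The paper expands the $4\times 4$ determinant by cofactors and then solves the resulting quartic by Ferrari's method: the shift $\lambda = z + \frac12$ to kill the cubic term, a resolvent cubic whose root $y_0 = a - \frac14$ must be spotted by inspection, and finally a factorization into two quadratics with discriminants $\Delta_\pm = -b^2 \mp 2\ii b - 4a + 1$. You instead read the characteristic polynomial $(\lambda^2-\lambda+a)^2 + b^2\lambda^2$ directly off the block structure via the commuting-blocks determinant identity, after which the sum of two squares factors over $\mathbb{C}$ for free into $\bigl(\lambda^2-(1-\ii b)\lambda+a\bigr)\bigl(\lambda^2-(1+\ii b)\lambda+a\bigr)$. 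The two arguments meet at the same place: undoing the paper's shift turns its quadratic factors into exactly yours, since $2y_0 - p = -b^2$ gives $\sqrt{2y_0-p} = \pm\ii b$. What your route buys is brevity and a structural explanation of why the quartic splits so cleanly: $M_{a,b}$ commutes with $\mathrm{diag}(J,J)$, so it is the realification of the complex $2\times 2$ matrix $\begin{pmatrix} 1-\ii b & \sqrt a \\ -\sqrt a & 0 \end{pmatrix}$, and your two factors are just its characteristic polynomial and the conjugate polynomial; Ferrari's method needs no such structure but costs length and the ad hoc resolvent root. One detail to make airtight: the identity $\det\begin{pmatrix} A & B \\ C & D \end{pmatrix} = \det(AD-BC)$ for commuting $C,D$ is often stated with $D$ invertible, and here $D = -\lambda I$ degenerates at $\lambda = 0$; this is harmless because both sides are polynomials in $\lambda$ (so the identity extends by continuity), or simply because $\lambda = 0$ is never an eigenvalue, as $\det M_{a,b} = a^2 > 0$.
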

 \begin{proof}
 Let $ a> 0 $ and $ b \in \mathbb{R} $. To calculate the spectrum of the matrix $ M_ {a, b} $, it is necessary to calculate the roots of its characteristic polynomial $ \mathcal {P}_{M_{a,b}} $
 \begin{align*}
\mathcal{P}_{M_{a,b}}(\lambda)&:=\begin{vmatrix}
1-\lambda & b & \sqrt{a} & 0 \\
-b & 1-\lambda & 0 &\sqrt{a}\\
-\sqrt{a} & 0 & -\lambda & 0\\
0 &-\sqrt{a} & 0 &-\lambda
\end{vmatrix}\\&= -\sqrt{a}\begin{vmatrix}
1-\lambda & \sqrt{a} & 0 \\
-b & 0 & \sqrt{a}\\
-\sqrt{a} & -\lambda & 0
\end{vmatrix} -\lambda \begin{vmatrix}
1-\lambda & b &\sqrt{a}\\
-b & 1-\lambda & 0\\
-\sqrt{a} & 0 & -\lambda
\end{vmatrix}\\
&=\lambda^{4} -2\lambda^{3}+ (2a+b^{2}+1)\lambda^{2}-2a\lambda +a^{2}.
\end{align*} 
 Now we are trying to solve
 \begin{align}\label{eq-p-1}
\mathcal{P}_{M_{a,b}}(\lambda)=\lambda^{4} -2\lambda^{3}+ (2a+b^{2}+1)\lambda^{2}-2a\lambda +a^{2}=0\,.
\end{align}
Equation \eqref{eq-p-1} is equivalent, by the change of variables
\begin{align}\label{eq-changement}
\lambda =z+\frac{1}{2}\,,
\end{align}
 to an equation without a term of degree-$ 3 $ term
 \begin{align}\label{eq-reecriture}
z^4+p\,z^2+q\,z+r=0\,,
\end{align}
where the coefficients $ p, q $ and $ r \in \mathbb{R} $ are defined as follows:
\begin{align}\label{p-q-r}
\begin{cases}
&p=\displaystyle \frac{1}{2}(2b^2+4a-1)\,,\\
&q=b^2\,,\\
&r=\displaystyle \frac{1}{16}(4b^2+16a^2-8a+1)\,.
\end{cases}
\end{align}
The principle of this method, which is called the Ferrari method (see \cite[pages 106-107]{hymers1858treatise} for more details on this method), consists in trying to factorize the first member of the equation \eqref{eq-reecriture} in the form of the product of two second-degree polynomials, in order to be able to reduce to the resolution of two second-degree equations. We notice that
\begin{align*}
z^4=(z^2+y)^2-2y\,z^2-y^2\,.
\end{align*}
The first member of equation \eqref{eq-reecriture} is then written
\begin{align*}
z^4+p\,z^2+q\,z+r&=(z^2+y)^2-2\,y\,z^2-y^2+p\,z^2+q\,z+r\\
&=(z^2+y)^2-\left( (2y-p)z^2-q\,z+y^2-r\right)\,.
\end{align*}
We will now try to determine $ y $ so that the expression in parentheses is written in the form of a square in order to be able to use the algebraic identity $\theta^2-\rho^2=(\theta -\rho)(\theta +\rho)$. The expression
$$ (2y-p) z ^ 2-qz + y ^ 2-r $$
may be considered as a second degree polynomial in $ z $. It can be put in the form of a square if its discriminant is zero. Let us calculate its discriminant:
$$\Delta=-8y^3+4py^2+8ry-4pr+q^2\,. $$
We must therefore choose $ y $ which cancels the discriminant, which amounts to solving the third degree equation in the unknown $ y $ :
$\Delta=-8y^3+4py^2+8ry-4pr+q^2=0\,.$ 
Using the equalities defined in the system \eqref{p-q-r}, we calculate the constant coefficient of the equation $ \Delta = 0 $ which is
\begin{align*}
-4pr+q^2&=-\frac{1}{8}(2b^2+4a-1)(4b^2+16a^2-8a+1)+b^4\\
&=-\frac{1}{8}\left( 2b^2+4(a-\frac{1}{4})\right)\left(4b^2+16(a-\frac{1}{4})^2\right)+b^4\\
&=-8\left(a-\frac{1}{4} \right)^3-4b^2\left(a-\frac{1}{4}\right)^2-2b^2\left( a-\frac{1}{4}\right)\\
&=\left(a-\frac{1}{4} \right)\left( -8a^2-4ab^2+4a-b^2-\frac{1}{2} \right),
\end{align*}
we notice that $ a-1/4 $ divides the constant coefficient of the equation $ \Delta = 0 $. Therefore
we verify that $ y_0 = a-1/4 $ is a solution of the previous equation because
\begin{align*}
-8\left(a-\frac{1}{4}\right)^3&+2\left(a-\frac{1}{4}\right)^2(4a+2b^2-1)
+\frac{1}{2}\left(a-\frac{1}{4}\right)(16a^2-8a+1+4b^2)\\
&-\frac{1}{8}(4a+2b^2-1)(16a^2-8a+4b^2+1)+b^4\\
&\qquad=0.
\end{align*}

Now, taking into account that $ y_0 $ cancels the discriminant ($ \Delta = 0 $), the first member of the equation \eqref{eq-reecriture} is written
\begin{align*}
z^4+p\,z^2+q\,z+r&=(z^2+y_0)^2-\left( (2y_0-p)z^2-q\,z+y_0^2-r\right)\\
&=(z^2+y_0)^2-(2y_0-p)\left(z^2-\displaystyle \frac{q}{2(2y_0-p)}\right)^2\\
&=\left(z^2+z\sqrt{2y_0-p}+y_0-\displaystyle\frac{q}{2\sqrt{2y_0-p}}\right)\\
&\times\left(z^2-z\sqrt{2y_0-p}+y_0+\displaystyle\frac{q}{2\sqrt{2y_0-p}}\right)\,,
\end{align*}
where $ \sqrt{2y_0-p} $ denotes one of the square roots of $ 2y_0-p $. The equation \eqref{eq-reecriture} is therefore equivalent to
\begin{align*}
z^2+z\sqrt{2y_0-p}+y_0-\displaystyle\frac{q}{2\sqrt{2y_0-p}}=0\,,
\end{align*}
or
\begin{align*}
z^2-z\sqrt{2y_0-p}+y_0+\displaystyle\frac{q}{2\sqrt{2y_0-p}}=0\,.
\end{align*}
The discriminants $ \Delta_+ $ and $ \Delta_- $ associated with each of the preceding equations are defined as follows:
\begin{align*}
\Delta_+&=2y_0-p-4\left(y_0-\displaystyle\frac{q}{2\sqrt{2y_0-p}}\right)=-b^2-4a-2\ii b +1\,,\\
\Delta_-&=2y_0-p-4\left(y_0+\displaystyle\frac{q}{2\sqrt{2y_0-p}}\right)=-b^2-4a+2\ii b +1\,.
\end{align*}
The solutions of the equation \eqref{eq-reecriture} are given by
\begin{align*}
z_1&=\displaystyle\frac{-\sqrt{2y_0+p}-\sqrt{\Delta_+}}{2}=\displaystyle \frac{1}{2}(-\ii b -\sqrt{A})\,,&&
z_2=\displaystyle\frac{-\sqrt{2y_0+p}+\sqrt{\Delta_+}}{2}=\displaystyle \frac{1}{2}(-\ii b +\sqrt{A})\,,\\
z_3&=\displaystyle\frac{\sqrt{2y_0+p}-\sqrt{\Delta_-}}{2}=\displaystyle \frac{1}{2}(\ii b -\overline{\sqrt{A}})\,,&&
z_4=\displaystyle\frac{\sqrt{2y_0+p}+\sqrt{\Delta_-}}{2}=\displaystyle \frac{1}{2}(\ii b +\overline{\sqrt{A}})\,,
\end{align*}
where $ A = -b^2-4a-2 \ii b + 1 $ is the auxiliary quantity which will be useful in the following and $ \overline{A} $ denotes the complex conjugate of $ A $.
From where the roots of the initial equation $ \mathcal{P}_{M_{a, b}} (\lambda) = 0 $ which characterizes the eigenvalues of the matrix $ M_{a, b} $, are given by
\begin{align*}
\lambda_1&=z_1+\displaystyle \frac{1}{2}=\displaystyle \frac{1}{2}(-\ii b -\sqrt{A}+1)&&
\lambda_2=z_2+\displaystyle \frac{1}{2}=\displaystyle \frac{1}{2}(-\ii b +\sqrt{A}+1)\\
\lambda_3&=z_3+\displaystyle \frac{1}{2}=\displaystyle \frac{1}{2}(\ii b -\overline{\sqrt{A}}+1)&&
\lambda_4=z_4+\displaystyle \frac{1}{2}=\displaystyle \frac{1}{2}(\ii b +\overline{\sqrt{A}}+1)\,.
\end{align*}
 \end{proof}
 \begin{remarque}\label{rem-symetrie}
 Note that the spectrum of the matrix $ M_{a, b} $ is independent of the sign of $ b $. This shows property $ 3) $ of Theorem \ref{thm q1}, which is physically justified by the symmetry of the system with respect to the direction of the magnetic field. In other words, this symmetry is given by
 $$ \lambda_1(a,b)=\mathcal{S}(\lambda_3(a,b))=\lambda_3(a,-b) \, \text{ and } \, \lambda_2(a,b)=\mathcal{S}(\lambda_4(a,b))=\lambda_4(a,-b)\,, $$
 where $ \mathcal{S} $ acts on functions by reflection in the second variable. We note that this symmetry on the set of $\lambda_j$ coincides with the complex conjugate.
 \end{remarque}
 Now, one is able to give the proof of Theorem \ref{thm q1}.
 \begin{proof}[Proof of Theorem \ref{thm q1}.]
 According to Lemma \ref{lem q1}, the spectrum of the matrix $ M_{a, b} $ is given by
 $$ \Sigma (M_{a,b}):=\{\lambda_1,\lambda_2, \lambda_3, \lambda_4\}, $$
 with $ \lambda_{j} $, $ j = 1, .., $ 4 are defined in \eqref{2} - \eqref{3}. Applying Corollary \ref{cor q1} to our operator $ P_{a, b} $, we obtain the spectrum of the $ P_{a, b} $ operator as follows:
 \begin{align*}
\Sigma (P_{a,b})=\{\alpha_{k}=\sum_{j=1}^{4} \, \lambda_j \, k_j, \, \forall k\in  \mathbb{N}^{4}\}.
\end{align*}
Now, we check the properties $ 1)$ and $ 2) $ of Theorem \ref{thm q1}. First, we remember that
\begin{align}\label{expression-auxiliaire1}
\lambda_1&=\displaystyle \frac{1}{2}(-\ii( b +c_2)+1-c_1), 
&&\lambda_2=\displaystyle \frac{1}{2}(-\ii (b -c_2)+1+c_1)\,,\\
\lambda_3&=\displaystyle \frac{1}{2}(\ii ( b+c_2)+1-c_1)\,,\label{expression-auxiliaire2}
&&\lambda_4=\displaystyle \frac{1}{2}(\ii (b-c_2)+1+c_1)\,.
\end{align}
where $ c = c_1 + \ii c_2 = \sqrt{A} $ and $ A = -b^2-4a-2 \ii b + 1 $ are defined in \eqref{eq_def_A} - \eqref{eq_def_c} respectively. Therefore, to calculate the asymptotics of eigenvalues when $ b \to + \infty $, it is enough to calculate the asymptotics of $ c_1 $ and $ c_2 $.
We recall that $ c = c_1 + \ii c_2 = \sqrt {A} $ is a square root of $ A = A_1 + \ii A_2 $ then
$$(c_1+\ii c_2)^2=A_1+\ii A_2\,. $$
The goal is to look for $ c_1 $ and $ c_2 $. Identifying the real part and the imaginary part in the equation above,
\begin{equation}
\begin{cases}
c_1^2-c_2^2=A_1=-b^2-4a+1\\
2c_1c_2=A_2=-2 b
\end{cases}
\end{equation}
and as $ (c_1 + \ii c_2)^2 = A $, we have the equality of absolute values
$$\vert (c_1+\ii c_2)\vert^2=\vert A_1+\ii A_2\vert \Longleftrightarrow c_1^2+c_2^2=\sqrt{A_1^2+A_2^2}=\vert A\vert\,. $$
So, we have
\begin{equation}
(c_1+\ii c_2)^2= A_1+\ii A_2 \Longleftrightarrow 
\begin{cases}\label{sys 1}
c_1^2-c_2^2=A_1\\
2\,c_1\,c_2=A_2\\
c_1^2+c_2^2=\vert A\vert
\end{cases}
\end{equation}
Solving the system of equations on the right \eqref{sys 1}, we get
\begin{equation}
\begin{cases}\label{sys 2}
c_1=\pm\sqrt{\frac{\vert A\vert +A_1}{2}}\\
c_2=\pm\sqrt{\frac{\vert A\vert -A_1}{2}}\\
2\,c_1\,c_2=A_2
\end{cases}
\end{equation}
and since $ A_2 = -2  b \leq 0 $ when $ b \geq 0 $, the real numbers $ c_1 $ and $ c_2 $ are of opposite sign. Without loss of generality we will treat the case $ b \geq 0 $, assuming that $ c_1 \geq 0 $ and therefore $ c_2 \leq 0 $.

To calculate the asymptotics of $ c_1 $ and $ c_2 $ when $ b \to + \infty $, we must first find the asymptotic of $ \vert A \vert $
\begin{align*}
\vert A\vert^2 =A_1^2+A_2^2&=(-b^2-4a+1)^2+4b^2\\
&=b^4 +b^2(8\,a+2)+(4\,a-1)^2\\
&=b^4 \left(1+\frac{2(4a+1)}{b^2}+\frac{(4a-1)^2}{b^4}\right)
\end{align*}
Then, using the Taylor-Young formula for $ \sqrt{1 + x} $ when $ x = \langle a\rangle b^{- 2} $ sufficiently small,
we obtain
\begin{align*}
\vert A\vert &=b^2 \sqrt{1+\frac{2(4a+1)}{b^2}+\frac{(4a-1)^2}{b^4}}\\
&=b^2 \left(1+\frac{(4\,a+1)}{b^2}+\frac{(4\,a-1)^2}{2b^4}-\frac{1}{8}\left(\frac{2(4a+1)}{b^2}+\frac{(4a-1)^2}{b^4}\right)^2 +\mathcal{O}(\langle a\rangle^2 b^{-6}) \right)\\
&=b^2\left( 1+\frac{(4\,a+1)}{b^2}+\frac{(4\,a-1)^2 - (4\,a+1)^2}{2b^4}+\mathcal{O}(\langle a\rangle^2 b^{-6} ) \right)\\
&=b^2+4\,a+1-\frac{8\,a}{b^2}+\mathcal{O}(\langle a\rangle^2 b^{-4} )\,.
\end{align*}
By replacing in the asymptotic $ \vert A \vert $ in equalities of system \eqref{sys 2}, we obtain
\begin{align*}
c_1=\sqrt{\frac{\vert A\vert +A_1}{2}}&=\sqrt{\frac{b^2+4\,a+1-\frac{8\,a}{b^2}+\mathcal{O}(\langle a\rangle^2 b^{-4} )-b^2+1-4\,a}{2}}\\
&=\sqrt{1-\frac{4\,a}{b^2}+\mathcal{O}(\langle a\rangle^2 b^{-4} )}\,.
\end{align*}
Then, using Taylor's formula of the function  $ x\to \sqrt{1-x} $, we get
\begin{align*}
c_1=1-\frac{2\,a}{b^2} +\mathcal{O}(\langle a\rangle^2 b^{-4} )\,.
\end{align*}
Similarly, we obtain
\begin{align*}
c_2=-\sqrt{b^2+4\,a-\frac{4\,a}{b^2}+\mathcal{O}(\langle a\rangle^2 b^{-4} )}&=-b\,\sqrt{1+\frac{4\,a}{b^2}+\mathcal{O}(\langle a\rangle^2 b^{-4} )}\\
&=-b\left(1+\frac{2\,a}{b^2}+\mathcal{O}(\langle a\rangle^2 b^{-4} )\right)    \\
&=-b-\frac{2\,a}{b} +\mathcal{O}(\langle a\rangle^2 b^{-3} )\,.
\end{align*}
Using the asymptotics of $ c_1 $ and $ c_2 $ when $ b \to + \infty $, we have
\begin{align}
1-c_1=\frac{2\,a}{b^2}+\mathcal{O}(\langle a\rangle^2 b^{-4} ) \, &\text{ and }\, 1+c_1=2-\frac{2a}{b^2}+\mathcal{O}(\langle a\rangle^2 b^{-4} )\,,\label{eg 1}\\
b-c_2=2\,b+\frac{2\,a}{b}+\mathcal{O}(\langle a\rangle^2 b^{-3} )\,&\text{ and }\, b+c_2=-\frac{2\,a}{b}+\mathcal{O}(\langle a\rangle^2 b^{-3} )\,.\label{eg 2}
\end{align}
This completes the proof of property $1)$ of Theorem
  \ref{thm q1}.
  
  Now, we will show property $ 2) $. Let $ a> 0 $ and $ b \neq 0 $, from expressions \eqref{expression-auxiliaire1}-\eqref{expression-auxiliaire2} of eigenvalues of $ M_{a, b} $ as a function of auxiliary quantities $ c_1 $ and $ c_2 $, we have that the imaginary parts of the eigenvalues are written as follows:
  $$\pm (c_2+b) \text{ and } \pm (b-c_2)\,.  $$
  Under the assumption that $ b \neq 0 $ there are two cases to treat.
  
  \textbf{$ \bullet $ Case $ b> 0 $:} In this case, according to the third equation of the system \eqref{sys 2} we have
  $$2c_1\,c_2=A_1=-2b\,.$$
  Without loss of generality, we can assume that $ c_1 \geq 0 $ and therefore $ c_2 \leq 0 $.
  So we have $ b-c_2> b> 0 $, so $ \Im (\lambda_2), \Im (\lambda_4) \neq 0 $.
  It remains to show that $ b + c_2 \neq 0 $ when $ b> 0 $, for this we will argue by contradiction. \\
    Suppose that $ b + c_2 = 0 $. Using the third equation of the system \eqref{sys 2}, we get
    \begin{align*}
   c_2+b=0\Longleftrightarrow c_2-c_1\,c_2=0
   &\Longleftrightarrow c_2(1-c_1)=0\\
   &\Longleftrightarrow c_2=0\, \text{ or }\, c_1=1\,. 
   \end{align*}
   Since $ c_2 = -b <0 $, we have $ c_2 \neq 0 $. Hence $ c_1 =1 $ , which implies that $ \lambda_1=\lambda_3 = 0 $. Now according to the first equation of system \eqref{sys 2}, we have
   $$c_1^2-c_2^2=A_2=-b^2-4a+1\Longleftrightarrow 1-b^2=-b^2-4a+1\Longleftrightarrow a=0\,, $$
   which is impossible as $ a> 0 $. Therefore, $ c_2 + b \neq 0 $, then $ \Im (\lambda_1), \Im (\lambda_3) \neq 0 $. This completes the demonstration of property $ 2) $ when $ b> 0 $.
    \end{proof}
\begin{remarque}\label{rem-distinct}
We note that if $ a> 0 $ and $ b \neq 0 $, then all the eigenvalues of $ M_{a, b} $ are distinct. Indeed, according to property $ 2) $ of Theorem \ref{thm q1} all the eigenvalues of $ M_{a, b} $ are non real {\it i.e.} $ \Im (\lambda_j) \neq 0 $, for all $ j = 1,2,3,4. $ We deduce that
$$ \lambda_1\neq \overline{\lambda}_1=\lambda_3 \,\text{ and } \,\lambda_2\neq \overline{\lambda}_2=\lambda_4.  $$
What remains is to show that $ \lambda_1 \neq \lambda_2 $ and $ \lambda_1 \neq \lambda_4 $.
We argue by contradiction. Suppose that $ \lambda_1 = \lambda_2 $. According to the equalities given in \eqref{expression-auxiliaire1} we have
$$ b+c_2=b-c_2 \,\text{ and }\, 1-c_1=1+c_1,$$
which implies that $ c_1 = c_2 = 0 $. Now, according to the second equation of the system \eqref{sys 1} we have
$ c_1 c_2 = b $ therefore $ b = 0$,  which is impossible because $ b $ was assumed to be non-zero. Hence $ \lambda_1 \neq \lambda_2 $. Similarly we can show that $ \lambda_1 \neq \lambda_4 $.
\end{remarque}   

   \section{Computation of the exponential norm of $ M_{a, b} $}
\subsection{Computation of the matrix exponential $ \ee^{- tM_{a, b}} $}
\subsubsection{Preliminary. }
The goal of this part is to introduce a method which consists in calculating the exponential of a matrix which one will use it afterwards. (see \cite[Chapter 10]{serre2001matrices} for more details on this subject).

The following proposition gives us a method to calculate the matrix exponential.
   \begin{prop}\label{prop exp}
If $P\in \mathrm{GL}_{n}(\mathbb{C})$ and $A\in  M_{n}(\mathbb{C})$, then $\ee^{P^{-1} A P}=P^{-1}\ee^A P$.
\end{prop}
We can use Proposition \ref{prop exp}, if the matrix is diagonalizable and we know the eigenvalues. This requires calculating the change-of-basis matrix, which cannot always be easily explained, especially in cases where the matrix has parameters.

There exists a simple method which is based on the interpolation of Lagrange, the only condition to be able to apply it is to be able to diagonalize the matrix and identify the eigenvalues.

Let $ A \in M_n (\Bbb C) $. By the Cayley-Hamilton Theorem and the series expansion, there exists a polynomial $Q \in \Bbb{C}[X]$ such that $Q(A) = \ee^{A}$. But for a diagonal matrix with distinct eigenvalues, we can be more explicit. Indeed, if $ Q \in \mathbb{C} [X] $ and $ A = P  D  P^{-1} $ with $ D = \mathrm{Diag} (\lambda_1, \dots, \lambda_n) $ then
$$Q(A)=P^{-1}Q(D)P.$$
It is enough to find $ Q $ such that
 $$ Q(D)=\mathrm{Diag} (Q(\lambda_1),...,Q(\lambda_n))=\mathrm{Diag}(\ee^{\lambda_1},...,\ee^{ \lambda_n})=\exp (D)\,.$$
 This is done very well by Lagrange interpolation if the eigenvalues $ \lambda_j $ of $ A $ are distinct
\begin{equation}\label{linterp}
  Q=\sum^{n}_{j=1}\,\ee^{\lambda_j} \,P_j \text{ with } P_j=\prod_{k=1,..,n, j\neq k} \frac{X-\lambda_k}{\lambda_j-\lambda_k }\,,
\end{equation}
  Finally, we deduce the expression of the exponential of $ A $ as follows:
   $$ Q(A)=P^{-1}Q(D)P=P^{-1}\,\ee^D\,P=\ee^A\,, $$
   as we wanted.
   \subsubsection{Application of Lagrange interpolation method.}
   We remember that the spectrum of the matrix $ M_{a, b} $ is given by
   $$\mathrm{Spec}(M_{a,b})=\{\lambda_1,\lambda_2,\lambda_3,\lambda_4\}\,, $$
   where the $ \lambda_j $ are defined in \eqref{2} - \eqref{3}, for all $j = 1,2,3,4 $. For $ a> 0 $ and $ b \neq 0 $, we denote $ P $ the change-of-basis matrix from the canonical basis of $ \mathbb{R}^4 $ to the basis of eigenvectors $ \mathcal{B} = \{v_1, v_2, v_3, v_4 \} $. The existence of $ P $ is justified by the fact that the eigenvalues $ \lambda_j $ are distinct (see Remark \ref{rem-distinct}) with the eigenvectors $ v_j $ defined as follows:
    \begin{align*}
  v_j =\left( \frac{\lambda_{j}^2 -\lambda_j +a}{b\sqrt{a}},\, -\frac{\lambda_j}{\sqrt{a}},\,-\frac{\lambda_j^2 -\lambda_j+a}{b\lambda_j},\, 1\right),  \,\forall j=1,..,4\,.
 \end{align*}
 To determine the matrix exponential $ \ee^{- tM_{a, b}} $, we set $ A_t = -tM_{a, b} $. According to \eqref{linterp}, there exists a polynomial $ H \in \mathbb{C} [X] $ such that
 $$\ee^{A_t}=H(A_t)\,,$$
with
 $$ H(X)=\sum_{j=1}^{4}\,\ee^{-\lambda_j t}\,P_j(X) $$
 where $$P_j(X)=\prod_{k=1
 k\neq j}^{4}\,\left(\frac{X+\lambda_k t}{-\lambda_j t +\lambda_k t}\right)\,, \forall j=1,2,3,4\,.$$
 After replacing $ X $ by $ -t  M_{a, b} $ in the polynomials
$ P_j $ for all $ j = 1, \dots, 4 $, we can see that $ P_j (-t  M_{a, b}) $ does not depend on $ t $. Using the symmetry defined in Remark \ref{rem-symetrie}, we obtain the exact expression of the matrix exponential of $ A_t $ as follows:
\begin{align}
\ee^{A_t}&=H(-tM_{a,b})\notag\\
&=2\,\left(\Re(\alpha)\,M_{a,b}^{3}+\Re (\beta)\,M_{a,b}^{2}+\Re(\gamma)\,M_{a,b}+\Re (\delta)I_{4}\right)\,,\label{forme1}
\end{align}                                               
where $ I_4 $ is the identity matrix of order $ 4 $, and the coefficients $ \alpha, \beta, \gamma, \delta \in \mathbb{C} $ are given by
\begin{align*}
\alpha&=-\left(X_1+X_2 \right)\,,\\
\beta&=\frac{1}{2}\left((c+3+ib)\,X_1+(-c+3+ib)\,X_2 \right)\,,\\
\gamma&=-\left((\vert \lambda_2\vert^2 +\lambda_3\,(c_1+1))\,X_1+(\vert \lambda_1\vert^2 +\lambda_4\,(-c_1+1))\,X_2 \right)\,,\\
\delta&=\frac{1}{4}\left(\vert \lambda_2\vert^2\,\lambda_3\, X_1+\vert \lambda_1\vert^2\,\lambda_4\,X_2 \right)\,,
\end{align*}
where $ c = c_1 + \ii c_2 = \sqrt {A} $ are the auxiliary quantities defined in \eqref{eq_def_A} and \eqref{eq_def_c},
and $ X_1 $ and $ X_2 $ take the following form :
\begin{align}
X_1&:=\frac{\ee^{-t\lambda_1}}{(\lambda_1-\lambda_2)(\lambda_1-\lambda_3)(\lambda_1-\lambda_4)}=\frac{\ee^{-t\lambda_1}}{\ii c(c_2+b)(c_1+\ii b)}\,,\label{X1}\\
X_2&:=\frac{\ee^{-t\lambda_2}}{(\lambda_2-\lambda_1)(\lambda_2-\lambda_3)(\lambda_2-\lambda_4)}=\frac{\ee^{-t\lambda_2}}{-\ii c(-c_2+b)(-c_1+\ii b)}\,.\label{X2}
\end{align}
We notice that $ X_2 $ is the symmetry of $ X_1 $ by the transformation $ \mathcal{\sigma} $
$$ \mathcal{\sigma} (f(c))=f(-c) $$
with $c=c_1+\ii c_2$, in other words, $ X_2 = \mathcal{\sigma} (X_1). $ In view of this symmetry, the expression of $ \ee^{- tM_{a, b}} $ is independent of the choice of $ \sqrt{A} $.
\begin{remarque}
We note that the expression \eqref{forme1} of the matrix exponential $ \ee^{- t M_{a, b}} $ involves the real parts of the coefficients $ \alpha, \beta, \gamma $ and $ \delta \in \mathbb{C} $ because the matrix $ \ee^{- t  M_{a, b}} $ is real. We could write the matrix $ \ee^{A_t} $ including terms $ X_3 $ and $ X_4 $ which are the complex conjugates of $ X_1 $ and $ X_2 $ respectively, since $ \Re (z) = \Re (\bar{z}) $. So, the complex conjugate symmetry helps us to simplify the expression of the matrix exponential and explains the presence of real parts in \eqref{forme1}.
\end{remarque}
We search for simple matrices which we can write the matrices $ M_{a, b}, M_{a, b}^2 $ and $ M_{a, b}^3 $ are linear combinations. For this we define
\begin{align}
&H_{vv} = \begin{pmatrix}
1&0&0&0\\
0&1&0&0\\
0&0&0&0\\
0&0&0&0
\end{pmatrix} &&H_{xx} =\begin{pmatrix}
0&0&0&0\\
0&0&0&0\\
0&0&1&0\\
0&0&0&1
\end{pmatrix} &&&
J=\begin{pmatrix}
0&0&-1&0\\
0&0&0&-1\\
1&0&0&0\\
0&1&0&0
\end{pmatrix}\label{matrix 1}\\
&J_J =\begin{pmatrix}
0&0&0&1\\
0&0&-1&0\\
0&-1&0&0\\
1&0&0&0
\end{pmatrix}&& J_{vv}=\begin{pmatrix}
0&-1&0&0\\
1&0&0&0\\
0&0&0&0\\
0&0&0&0
\end{pmatrix} &&&J_{xx}=\begin{pmatrix}
0&0&0&0\\
0&0&0&0\\
0&0&0&-1\\
0&0&1&0
\end{pmatrix}\label{matrix 2}
\end{align}
with which we can write
\begin{align}
M_{a,b}=H_{vv}-b\,J_{vv}-\sqrt{a}\,J\,,\label{exp1}
\end{align}
\begin{align}
&M_{a,b}^2=(-b^2+1)\,H_{vv}-2b\,J_{vv}-\sqrt{a}\,J+\sqrt{a}b\,J_{J}-a\,I_{4}\,,\label{exp2}
\end{align}
\begin{align}
M_{a,b}^3&=(-3b^2-a+1)\,H_{vv}+(b^3+(2a-3)b)\,J_{vv}+(\sqrt{a}b^2+\sqrt{a}(a-1))\,J\label{exp3}\\
&+2\sqrt{a}b\,J_{J}+a\,b\,J_{xx}-a\,I_4\,.\notag
\end{align} 
We note that the choice of these matrices is based on the study of the case of the quadratic operator (KFP) without magnetic field in \cite{said2018quaternionic}, where the authors chose $ H_{vv}, J_{xx}, J $ and $ H_{xx} $, and by adding the matrices $ J_J $ and $ J_{vv} $ due to the magnetic field.

Using the equalities \eqref{exp1} - \eqref{exp3}, the matrix exponential takes the following form:
\begin{align}\label{ecriture exp}
\ee^{-tM_{a,b}}=g\,H_{vv}+d\,J+e\,J_{J}+f\,J_{vv}+h\,J_{xx}+k\,(H_{xx}+H_{vv})\,,
\end{align}
where the coefficients $ g, d, e, f, h $ and $ k \in \mathbb{R} $ are given by :
\begin{align}
g&=2b(b\Re(L)-\Im(L))\,,\label{g}\\
d&=2b\sqrt{a}\Im(L)\,,\label{d}\\
e&=-2b\sqrt{a}\Re(L)\,,\label{e}\\
f&=2b(\Re(L)+b\Im(L))-2ba\Re(O)\,,\label{f}\\
h&=-2ab\Re(O)\,,\label{h} \\
k&=2ab\Im(O)\,,\label{k}
\end{align}
where 
\begin{align}\label{def-L_O}
 L=\lambda_1 X_1 +\lambda_2 X_2 \text{ and } O=X_1+X_2\,,
\end{align} 
and $ X_1 $ and $ X_2 $ are defined in \eqref{X1} and \eqref{X2} respectively.
\subsection{Proof of Theorem \ref{thm_2,4}}
\subsubsection{Intermediate case. }
In this part, we seek to find the eigenvalues for a specific class of real matrices, which are written as a linear combination of matrices $ H_{vv} - H_{xx}, J_{J}, $ and $$K= [J,H_{vv}]=\begin{pmatrix}
0&0&1&0\\
0&0&0&1\\
1&0&0&0\\
0&1&0&0
\end{pmatrix}$$ where $ H_{vv}, H_{xx}, J $ and $ J_J $ are defined in \eqref{matrix 1} and \eqref{matrix 2}.
\begin{prop}\label{prop-intermediaire}
Let $A\in M_4 (\Bbb R)$ be such that 
$$A=\beta_1 (H_{vv}-H_{xx})+\beta_2 K+\beta_3 J_{J}, $$
where $\beta_j\in \mathbb{C}$ for all $j=1,2,3$. Then the eigenvalues of the matrix $ A $ are of the following form:\begin{align*}
\alpha_1=-\sqrt{ \beta_{1}^2+ \beta_2^2+\beta_3^2}\,\text{ and } \,
\alpha_2=\sqrt{ \beta_{1}^2+ \beta_2^2+\beta_3^2}
\end{align*}
\end{prop}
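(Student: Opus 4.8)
The plan is to bypass any direct eigenvalue computation of the $4\times 4$ matrix $A$ and instead exploit the Clifford-like algebraic structure of the three generating matrices. The key observation I would establish first is that, setting $D := H_{vv} - H_{xx} = \mathrm{Diag}(1,1,-1,-1)$, the three real matrices $D$, $K$ and $J_J$ are involutions, i.e.
\begin{equation*}
D^2 = K^2 = J_J^2 = I_4 ,
\end{equation*}
which is immediate for the diagonal matrix $D$ and a short direct check for $K$ and $J_J$ from their explicit forms given in the statement and in \eqref{matrix 2}.

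Next I would verify that these three matrices pairwise anticommute:
\begin{equation*}
DK + KD = DJ_J + J_J D = KJ_J + J_J K = 0 .
\end{equation*}
This is the computational heart of the argument and the step I expect to be the main obstacle, though it is only mildly tedious: each product permutes and sign-flips the standard basis vectors $e_1,\dots,e_4$ in a transparent way (here $D$ acts diagonally by $\pm 1$, while $K$ and $J_J$ each exchange the $\{1,2\}$- and $\{3,4\}$-index blocks, so pre- and post-composition with $D$ differ by a global sign), and one checks the three relations by simply evaluating both sides on $e_1,\dots,e_4$.

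With the involution and anticommutation relations in hand, the conclusion is purely algebraic and holds over $\Bbb C$ regardless of whether the $\beta_j$ are real. Expanding
\begin{equation*}
A^2 = (\beta_1 D + \beta_2 K + \beta_3 J_J)^2 ,
\end{equation*}
the three squared terms contribute $(\beta_1^2 + \beta_2^2 + \beta_3^2)\,I_4$, while each cross term $\beta_i\beta_j$ ($i\neq j$) is paired with an anticommutator and hence vanishes. Thus
\begin{equation*}
A^2 = (\beta_1^2 + \beta_2^2 + \beta_3^2)\,I_4 .
\end{equation*}
Consequently the minimal polynomial of $A$ divides $X^2 - (\beta_1^2+\beta_2^2+\beta_3^2)$, so every eigenvalue $\alpha$ of $A$ satisfies $\alpha^2 = \beta_1^2+\beta_2^2+\beta_3^2$; that is, $\alpha \in \{\alpha_1,\alpha_2\}$ with $\alpha_{1,2}=\mp\sqrt{\beta_1^2+\beta_2^2+\beta_3^2}$, as claimed. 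Should one also want the multiplicities, it suffices to note that $\mathrm{Tr}(A)=0$ (the diagonal of $A$ is $\beta_1,\beta_1,-\beta_1,-\beta_1$), so when $\beta_1^2+\beta_2^2+\beta_3^2\neq 0$ the two values $\alpha_1$ and $\alpha_2$ each occur with multiplicity $2$.
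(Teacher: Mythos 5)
Your proof is correct, and it takes a genuinely different route from the paper's. The paper proceeds by brute force: it expands the $4\times 4$ determinant of $A - X I_4$ by cofactors, obtains the characteristic polynomial $X^4 - 2(\beta_1^2+\beta_2^2+\beta_3^2)X^2 + (\beta_1^2+\beta_2^2+\beta_3^2)^2$, and factors it as the perfect square $\left(X^2-(\beta_1^2+\beta_2^2+\beta_3^2)\right)^2$, which yields both the eigenvalues and their multiplicities at once. You instead identify the structural reason behind this Pythagorean formula: $D = H_{vv}-H_{xx}$, $K$ and $J_J$ are pairwise anticommuting involutions (a Clifford-type relation), so $A^2 = (\beta_1^2+\beta_2^2+\beta_3^2)I_4$ follows by expanding the square, the cross terms vanishing as anticommutators; the minimal polynomial then confines the spectrum to $\pm\sqrt{\beta_1^2+\beta_2^2+\beta_3^2}$, and the trace-zero observation recovers the multiplicities (and, when the sum is nonzero, you even get diagonalizability for free, which the characteristic polynomial alone does not give). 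Your verification steps are all sound: the involution and anticommutation identities check out on the standard basis, and the scalar coefficients $\beta_j$ being complex causes no difficulty since scalars commute with matrices — indeed your remark that the argument works over $\C$ is apt, given that the hypothesis $A \in M_4(\R)$ with $\beta_j \in \C$ in the statement is slightly inconsistent. The trade-off: the paper's computation is self-contained but opaque and error-prone, while your argument is shorter, explains \emph{why} the spectrum has this form, makes the extension in the paper's subsequent remark (adding $\beta_4 I_4$) equally transparent, and generalizes immediately to any linear combination of pairwise anticommuting involutions in any dimension.
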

\begin{proof}
We seek to find the spectrum of the matrix $ A $, and we will therefore find the roots of its characteristic polynomial $ \mathcal{P}_A $
\begin{align*}
\mathcal{P}_A (X):&=\begin{vmatrix}
\beta_1-X&0&\beta_3&\beta_2\\
0&\beta_1-X&-\beta_2&\beta_3\\
\beta_3&-\beta_2&-\beta_1-X&0\\
\beta_2&\beta_3&0&-\beta_1-X
\end{vmatrix} \\
&=(\beta_1-X)\begin{vmatrix}
\beta_1-X&-\beta_2&\beta_3\\
-\beta_2&-\beta_1-X&0\\
\beta_3&0&-\beta_1-X
\end{vmatrix}+\beta_3\begin{vmatrix}
0&\beta_1-X&\beta_3\\
\beta_3&-\beta_2&0\\
\beta_2&\beta_3&-\beta_1-X
\end{vmatrix}\\
&\qquad \qquad \qquad-\beta_2\begin{vmatrix}
0&\beta_1-X&-\beta_2\\
\beta_3&-\beta_2&-\beta_1-X\\
\beta_2&\beta_3&0
\end{vmatrix}\\
&=X^4-2(\beta_1^2+\beta_2^2+\beta_3^2)\,X^2+(\beta_1^2+\beta_2^2+\beta_3^2)^2.
\end{align*}
Now we solve the following equation:
$$ \mathcal{P}_A(X)=X^4-2(\beta_1^2+\beta_2^2+\beta_3^2)\,X^2+(\beta_1^2+\beta_2^2+\beta_3^2)^2=0.$$
Notice that the polynomial $ \mathcal{P}_A (X) $ is a square which can be factorized as follows:
\begin{align*}
\mathcal{P}_A(X)&=(X^2-\left(\beta_1^2+\beta_2^2+\beta_3^2)\right)^2\\
&=\left(X-\sqrt{\beta_1^2+\beta_2^2+\beta_3^2}\right)^2\,\left(X+\sqrt{\beta_1^2+\beta_2^2+\beta_3^2}\right)^2.
\end{align*}
Then the polynomial $ \mathcal{P}_A (X) $ admits two double roots $ \alpha_1 $ and $ \alpha_2 $
\begin{align}\label{spectre_A}
\alpha_1=-\sqrt{ \beta_{1}^2+ \beta_2^2+\beta_3^2}\,\text{ and } \,
\alpha_2=\sqrt{ \beta_{1}^2+ \beta_2^2+\beta_3^2},
\end{align}
which constitute the spectrum of $ A $.
\end{proof}
\begin{remarque}\label{remark crucial}
Let $ B $ be the matrix
$$ B=\beta_1\, (H_{vv}-H_{xx})+\beta_2 \,K+\beta_3\,J_J +\beta_4\,I_4.$$
Then the spectrum of this matrix is made up of two eigenvalues of multiplicity $2$ \begin{align*}
\gamma_1=-\sqrt{ \beta_{1}^2+ \beta_2^2+\beta_3^2}+\beta_4\,\text{ and } \,
\gamma_2=\sqrt{ \beta_{1}^2+ \beta_2^2+\beta_3^2}+\beta_4.
\end{align*}
Indeed, the spectrum of a matrix satisfies the following property:
$$ \mathrm{Spec}(B)=\mathrm{Spec}(A+\beta_4\,I_4)=\mathrm{Spec}(A)+\beta_4\,.$$
Then, according to Proposition \ref{prop-intermediaire} 
$$ \mathrm{Spec}(A)=\{\alpha_1, \alpha_2\}\,,$$ where $\alpha_1$ and $\alpha_2$ are defined in \eqref{spectre_A}. Therefore, the spectrum of $ B $ is
$\mathrm{Spec}(B)=\{\alpha_1+\beta_4, \alpha_2+\beta_4\}.$ 
\end{remarque}

\subsubsection{Computation of the norm of the matrix $ \ee^{-tM_ {a, b}} $. }
In this section, we will describe the matrix exponential $ \ee^{- tM_ {a, b}} $ in terms of the parameters $ a $ and $ b $ for any $ t \geq 0 $.
\begin{prop}\label{prop norme}
Let $ a> 0 $ and $ b \in \mathbb{R} $.
Then
 $$\Vert \ee^{-tM_{a,b}}\Vert=\sqrt{\left(\sqrt{P_1^2+P_2^2+(P_3/2)^2} +P_4/2\right)}\,, $$
 where the $ P_j $ are
  \begin{align}
 P_1&=-4\,b^2\,\sqrt{a}\,\vert L\vert^{2}\,,\label{p1}\\
 P_2&=b\,P_1-8\,b^2\,a\,\sqrt{a}\,\Im(O\,\bar{L})\,, \label{p2}\\
 P_3&=4\,b^2\,(b^2+1)\vert L\vert^{2}+8\,a\,b^2\,(b\,\Im(O\,\bar{L})-\Re(O\,\bar{L})),\label{p3}\\
 P_4&=P_3+8\,a\,b^2\,(\vert L\vert^2 +a\,\vert O\vert^2 )\,.\label{p4}
 \end{align}
 where $ L $ and $ O $ are defined in the equality \eqref{def-L_O}.
 \end{prop}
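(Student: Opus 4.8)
The plan is to reduce the computation of the operator norm to an eigenvalue problem that falls under Remark \ref{remark crucial}. Since $N := \ee^{-tM_{a,b}}$ is a real matrix, its operator norm induced by the Euclidean norm on $\mathbb{C}^4$ is its largest singular value, so
\[
	\Vert N\Vert^2 = \lambda_{\max}(N^{T}N),
\]
the largest eigenvalue of the symmetric positive matrix $N^{T}N$ (one could equally use $NN^{T}$). Thus it suffices to compute $N^{T}N$ and bring it into the form treated in Remark \ref{remark crucial}.

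First I would insert the representation \eqref{ecriture exp}, namely $N = g\,H_{vv} + d\,J + e\,J_J + f\,J_{vv} + h\,J_{xx} + k\,(H_{xx}+H_{vv})$, into $N^{T}N$ and expand. Here I would record the transpose rules for the building blocks of \eqref{matrix 1}--\eqref{matrix 2}: the matrices $H_{vv}, H_{xx}, J_J$ (together with $K$) are symmetric, while $J, J_{vv}, J_{xx}$ are antisymmetric. Grouping the expansion into symmetric pairs $X_i^{T}X_j + X_j^{T}X_i$, I would then use the multiplication table of these matrices — most transparently by writing each as a $2\times 2$ block matrix built from $I_2$ and the rotation $R = \bigl(\begin{smallmatrix}0 & -1\\ 1 & 0\end{smallmatrix}\bigr)$ — to verify that every symmetric combination lands in the four-dimensional space spanned by $H_{vv}, H_{xx}, J_J, K$.

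The key structural point, and the reason these particular matrices were selected, is that this subspace is stable under the symmetrized products occurring in $N^{T}N$; once this is checked, $N^{T}N$ can be written as
\[
	N^{T}N = \beta_1\,(H_{vv}-H_{xx}) + \beta_2\,K + \beta_3\,J_J + \beta_4\,I_4,
\]
with $\beta_1,\beta_2,\beta_3,\beta_4$ explicit quadratic expressions in $g,d,e,f,h,k$. Applying Remark \ref{remark crucial}, the spectrum of $N^{T}N$ then consists of the two double eigenvalues $\beta_4 \pm \sqrt{\beta_1^2+\beta_2^2+\beta_3^2}$, whence
\[
	\Vert N\Vert^2 = \lambda_{\max}(N^{T}N) = \sqrt{\beta_1^2+\beta_2^2+\beta_3^2} + \beta_4.
\]

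Finally I would substitute the values \eqref{g}--\eqref{k} of $g,d,e,f,h,k$ in terms of $L$ and $O$ from \eqref{def-L_O} and simplify, so that the quadratic quantities $\beta_1^2+\beta_2^2+\beta_3^2$ and $\beta_4$ reduce to expressions in $\vert L\vert^2$, $\vert O\vert^2$, $\Re(O\bar L)$ and $\Im(O\bar L)$. Matching these against the definitions \eqref{p1}--\eqref{p4} should yield $\beta_1^2+\beta_2^2+\beta_3^2 = P_1^2 + P_2^2 + (P_3/2)^2$ and $\beta_4 = P_4/2$, which gives the stated formula. I expect the main obstacle to be twofold: establishing the stability of the span $\{H_{vv},H_{xx},J_J,K\}$ under the relevant symmetrized products (the algebraic heart of the argument, where all off-subspace contributions must cancel), and then carrying out the bookkeeping that matches the resulting coefficients, with their various powers of $a$, $\sqrt a$ and $b$, to the precise combinations defining $P_1,\dots,P_4$.
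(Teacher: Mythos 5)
Your proposal follows essentially the same route as the paper: the paper likewise reduces $\Vert \ee^{-tM_{a,b}}\Vert^2$ to the largest eigenvalue of the Gram matrix (using $\ee^{-tM_{a,b}}\ee^{-tM_{a,b}^*}$ rather than your $N^TN$, which is equivalent), expands it via \eqref{ecriture exp} into the span of $H_{vv}-H_{xx}$, $K$, $J_J$ and $I_4$, applies Remark \ref{remark crucial}, and then substitutes \eqref{g}--\eqref{k} to obtain \eqref{p1}--\eqref{p4}. The coefficient identifications you anticipate are exactly the paper's $P_1=dg+e(f-h)$, $P_2=eg+2ek-df-hd$, $P_3=g^2+2kg+f^2-h^2$, $P_4=P_3+2(k^2+d^2+e^2)$, so your argument is correct and matches the paper's proof.
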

 \begin{proof}
Let $ a> 0 $ and $ b \in \mathbb{R} $. According to section 4.2, the matrix $ \ee^{- tM_{a, b}} $ can be written
 $$ \ee^{-tM_{a,b}}=g\,H_{vv}+d\,J+e\,J_{J}+f\,J_{vv}+h\,J_{xx}+k\,(H_{xx}+H_{vv})\,, $$
 where the coefficients $ g, d, e, f, h $ and $ k $ are defined in the equalities \eqref{g}--\eqref{k}. We recall that the norm of the matrix exponential can be calculated as follows:
 \begin{align} \label{norme 2}
\Vert \ee^{-tM_{a,b}}\Vert = \sqrt{\rho (\ee^{-tM_{a,b}}\,\ee^{-tM^{*}_{a,b}})}\,,
\end{align}
where $ \rho (M) $ denotes the largest eigenvalue of $ M $ in absolute value, for $ M \in \mathbb{M}_{n} (\mathbb{C}) $.
It remains for us to calculate the matrix $ \ee^{- tM_{a, b}}  \ee^{- tM^{*}_{a, b}} $. Using equality \eqref{ecriture exp}, we get
\begin{align*}
\ee^{-tM_{a,b}}\,\ee^{-tM^{*}_{a,b}}&=\frac{1}{2}(g^2+2kg+f^2-h^2)(H_{vv}-H_{xx})\\
&+(dg+e(f-h))\,K
+(eg+2ek-df-hd)J_{J}\\
&+\frac{1}{2}(g^2+2\,kg+f^2-h^2+2(k^2+d^2+e^2))(H_{vv}+H_{xx}).
\end{align*}
According to the preceding equality, we obtain that the matrix $ \ee^{- tM_{a, b}} \ee^{- tM_{a, b}^*} $ belongs to the class of matrices introduced in the previous section. According to the intermediate case and Remark \ref{remark crucial}, we have that the greatest eigenvalue is
\begin{align}\label{rho}
\rho (\ee^{-tM_{a,b}}\,\ee^{-tM_{a,b}^*})= \sqrt{\left(\sqrt{P_1^2+P_2^2+(P_3/2)^2} +P_4/2\right)}\,,
\end{align}
where the $ P_j $ are
\begin{align*}
P_1&=dg+e(f-h)\,,\\
P_2&=eg+2ek-df-hd\,,\\
P_3&=g^2+2kg+f^2-h^2\,,\\
P_4&=g^2+2kg+f^2-h^2+2(k^2+d^2+e^2)\,.
\end{align*}
To complete the proof, we use the equalities \eqref{g}--\eqref{k} to obtain the equalities \eqref{p1}--\eqref{p4}\,.
 \end{proof}
Now, we will give the proof of Theorem \ref{thm_2,4}.
\begin{proof}[Proof of Theorem \ref{thm_2,4}]
According to the previous Proposition, we have
\begin{align}\label{formula-recall}
\Vert \ee^{-tM_{a,b}}\Vert=\sqrt{\left(\sqrt{P_1^2+P_2^2+(P_3/2)^2} +P_4/2\right)}\,.
\end{align}
To complete the proof of Theorem \ref{thm_2,4}, we must express the coefficients $ P_1, P_2, P_3 $ and $ P_4 $ in terms of the auxiliary quantities $ A $ and $ c $ defined in \eqref{eq_def_A}-\eqref{eq_def_c}. As the coefficients indicated just before are calculated as functions of $ O $ and $ L $, we are first interested in explicitly calculating these quantities.
We notice that
$$(\lambda_1 - \lambda_3)(\lambda_1 - \lambda_4) = -2\ii b\lambda_1 \,\text{ and }\, \lambda_1\,\lambda_2=a\,. $$
This allows us to write
\begin{align*}
	X_1 := \frac{\ee^{-t\lambda_1}}{(\lambda_1-\lambda_2)(\lambda_1-\lambda_3)(\lambda_1-\lambda_4)}
	= -\frac{\ee^{-t\lambda_1}}{2\ii bc\lambda_1}
	= -\frac{\lambda_2\,\ee^{-t\lambda_1}}{2\ii abc},
\end{align*}
where $ c = \sqrt{A} $ is the auxiliary quantity defined in \eqref{eq_def_c}.
This leads us to simplify the expressions of $ O $ and $ L $ as follows:
\begin{align*}
	O &= X_1 + X_2 = \frac{1}{2\ii abc}(\lambda_1\ee^{-t\lambda_2} - \lambda_2\ee^{-t\lambda_1}),\\
	L &= \lambda_1 X_1 + \lambda_2 X_2 = \frac{1}{2\ii bc}(\ee^{-t\lambda_2} - \ee^{-t\lambda_1}).
\end{align*}
We introduce temporary notations similar to $ \cosh $ and $ \sinh $
\begin{align}\label{def_SH-CH}
\CH = \frac{1}{2}(\ee^{-t\lambda_1} + \ee^{-t\lambda_2}), \quad \SH = \frac{1}{2}(\ee^{-t\lambda_1}-\ee^{-t\lambda_2}),
\end{align}
with which we write
\[
	O = -\frac{1}{2\ii abc}(c\CH + (1-\ii b)\SH), \quad L = -\frac{1}{\ii bc}\SH.
\]
This allows us to express the quantity $ O  \overline{L} $ as follows
$$O\overline{L}= \frac{1}{2ab^2|A|}\left(c\CH\overline{\SH} + (1-\ii b)|\SH|^2\right).$$
Now, we are able to explain the coefficients that determine the norm of the matrix exponential. We start by calculating $ P_1 $:
\[
	P_1 = -4\,b^2\,\sqrt{a}\,\vert L\vert^{2}=-\frac{4\sqrt{a}}{|A|}|\SH|^2.
\]
Then, we can calculate $ P_2 $:
\begin{align*}
	P_2 &= bP_1 - 8b^2a^{3/2} \Im(O\overline{L})
	\\ &= -\frac{4b\sqrt{a}}{|A|}|\SH|^2 - \frac{8b^2a^{3/2}}{2ab^2|A|}\Im\left(c \CH \overline{\SH} + (1-\ii b)|\SH|^2\right)
	\\ &= -\frac{4\sqrt{a}}{|A|}\Im(c\CH\overline{\SH}).
\end{align*}
Then, we calculate $ P_3 $ as follows:
\begin{align*}
	P_3 &= 4b^2(b^2+1)|L|^2 + 8ab^2(b\Im(O\overline{L}) - \Re(O\overline{L}))
	\\ &= 4b^2(b^2+1)\frac{1}{b^2|A|}|\SH|^2 + \frac{8ab^2}{2ab^2|A|}(b\Im(c\CH\overline{\SH} + (1-\ii b)|\SH|^2)\\& - \Re(c\CH\overline{\SH} + (1-\ii b)|\SH|^2)
	\\ &= \frac{4}{|A|}(b^2+1)|\SH|^2 + \frac{4}{|A|}(b\Im c\CH\overline{\SH} - b^2|\SH|^2 - \Re c\CH\overline{\SH} - |\SH|^2)
	\\ &= \frac{4}{|A|}(b\Im (c\CH\overline{\SH}) - \Re (c\CH\overline{\SH})).
\end{align*}
Finally, we analyze $P_4$, for which we need to study
\begin{align*}
	8a^2b^2 |O|^2 &= \frac{2}{|A|}|c\CH + (1-\ii b)\SH|^2
	\\ &= \frac{2}{|A|}\left(|A|\,|\CH|^2 + 2\Re(c\CH(1+\ii b)\overline{\SH}) + (1+b^2)|\SH|^2\right)
	\\ &= 2|\CH|^2 + \frac{4}{|A|}(\Re (c\CH\overline{\SH}) - b\Im (c\CH\overline{\SH})) + \frac{2}{|A|}(1+b^2)|\SH|^2
	\\ &= 2|\CH|^2 + \frac{2}{|A|}(1+b^2)|\SH|^2 - P_3.
\end{align*}
We can conclude that
\begin{align*}
	P_4 &= P_3 + 8ab^2(|L|^2 + a|O|^2)
	\\ &= P_3 + \frac{8a}{|A|}|\SH|^2 + 2|\CH|^2 + \frac{2}{|A|}(1+b^2)|\SH|^2 - P_3
	\\ &= \frac{2}{|A|}(1+b^2+4a)|\SH|^2 + 2|\CH|^2.
\end{align*}
Inserting into formula \eqref{formula-recall}, we obtain
\begin{align}\label{formule_temporaire}
	\|\ee^{-tM_{a,b}}\|^2 &= |\CH|^2 + \frac{1}{|A|}(1+b^2+4a)|\SH|^2 \notag\\&+ \frac{1}{|A|}\sqrt{16a(|\SH|^4 + (\Im (c\CH\overline{\SH}))^2) + 4(b\Im (c\CH\overline{\SH}) - \Re (c\CH\overline{\SH}))^2}.
\end{align}
To end the demonstration, using the expressions given in \eqref{def_SH-CH}, we get
\begin{align}
\CH\overline{\SH}&=\frac{\ee^{-t}}{2}\,(\sinh (c_1t)-\ii \,\sin (c_2t)),\\
\Im (c\CH\overline{\SH})&=\frac{\ee^{-t}}{2}\,(c_2\sinh (c_1t)-c_1\sin (c_2t)),\\
\Re (c\CH\overline{\SH})&=\frac{\ee^{-t}}{2}\,(c_1\sinh (c_1t)+c_2\sin (c_2t)),\\
\vert \SH\vert^2&=\frac{\ee^{-t}}{2} (\cosh (c_1t)-\cos (c_2t)),\\
\vert \CH\vert^2&=\frac{\ee^{-t}}{2} (\cosh (c_1t)+\cos (c_2t)).
\end{align}
Finally, we simplify the expression of the exponential norm in the following form
$$\|\ee^{-tM_{a,b}}\|^2 = \frac{1}{|A|}\ee^{-t}\left(T + \sqrt{S}\right).$$
Using the expression of the norm obtained in \eqref{formule_temporaire}, the previous equalities and $ 1 + b^2 + 4a = 2-A_1 $, we have
\begin{align*}
T&:=\ee^{t}\,\vert A\vert\,(\vert\CH|^2 + \frac{1}{|A|}(1+b^2+4a)|\SH|^2)\\
&=\frac{1}{2}\left((|A| -A_1+2)\cosh(c_1 t) + (|A|+A_1-2)\cos(c_2 t)\right),
\end{align*}
where $ A_1 = \Re (A) $ is defined in \eqref{eq_def_A}. Similarly, we have
\begin{align*}
S&:=\ee^{2t}\left(16a(|\SH|^4 + (\Im (c\CH\overline{\SH}))^2) + 4(b\Im (c\CH\overline{\SH}) - \Re (c\CH\overline{\SH}))^2\right)\\
&=4a(\cosh(c_1t)-\cos (c_2 t))^2+(4ac_2^2+(bc_2-c_1)^2)\sinh^2(c_1t)\\
&+(4ac_1^2+(bc_1+c_2)^2)\sin^2(c_2t)\\
&+(-8ac_1c_2-2(bc_1+c_2)(bc_2-c_1))\sin(tc_2)\sinh (c_1t).
\end{align*}
In order to complete the proof of the theorem, we will calculate each coefficient of the previous equality. We start by calculating
\begin{align*}
4ac_2^2+(bc_2-c_1)^2&=(4a+b^2)c_2^2+c_1^2-2bc_1c_2\\
&=(4a+b^2)(\frac{\vert A\vert -A_1}{2})+\frac{\vert A\vert +A_1}{2}+2b^2\\
&=(1+b^2+4a)\frac{\vert A\vert}{2}+(1-4a-b^2)\frac{A_1}{2}+2b^2\\&=(1+b^2+4a)\frac{\vert A\vert}{2}+\frac{A_1^2}{2}+\frac{A_2^2}{2}\\
&=(2-A_1)\frac{\vert A\vert}{2}+\frac{\vert A\vert^2}{2},
\end{align*}
we used the equations of the system \eqref{sys 2} and the fact that $1+b^2+4a=2-A_1$, $A_2=-2b$ and $A_1=1-b^2-4a$. Similarly, we can calculate
\begin{align*}
4ac_1^2+(bc_1+c_2)^2=(2-A_1)\frac{\vert A\vert}{2}-\frac{\vert A\vert^2}{2}.
\end{align*} 
The last coefficient takes the form
\begin{align*}
-8ac_1c_2-2(bc_1+c_2)(bc_2-c_1)&=2c_1c_2(-4a-b^2+1)+2b(c_1^2-c^2_2)\\
&=2c_1c_2\,A_1+2b\,A_1\\
&=(2c_1c_2+2b)\,A_1\\&=0,
\end{align*}
where we also used the equations of the system \eqref{sys 2}.
Consequently, we can rewrite $ S $ in the following form:
\begin{align*}
S &= 4a(\cosh (c_1t) - \cos (c_2 t))^2 + \frac{1}{2}(2-A_1)|A|(\sinh^2 (c_1 t) + \sin^2 (c_2 t)) 
	\\ & \qquad + \frac{1}{2}|A|^2(\sinh^2 (c_1 t) - \sin^2 (c_2 t))
\end{align*}
Then, by developing the functions $\cosh^2(c_1t), \cos^2(c_2t), \sinh^2(c_1t)$ and $\sin^2(c_2t)$ in terms of
$\cosh (2c_1t)$ and $\cos(2c_2t)$, we obtain
\begin{align*}
S&= 8a(1-\Re(\cosh ct)) + \frac{1}{4}R|A|(\cosh 2c_1t - \cos 2c_2t)
	\\ & \qquad + (2a + \frac{1}{4}|A|^2)(\cosh 2c_1 t + \cos 2c_2t - 2),
\end{align*}
where $ \Re (\cosh ct) = \cosh (c_1t) \cos (c_2t) $ and we recall that $ c = c_1 + \ii c_2$ defined in \eqref{eq_def_c}. This completes the proof of Theorem \ref{thm_2,4}.
\end{proof}
\begin{remarque}
We note that the Lie algebra which we used to decompose the matrix exponential $ \ee^{- tM_{a, b}} $ coincides with the Lie algebra that we found for the operator $ P_{a, b} $ (see \cite[Section 3.1]{zk2} for more details), in the sense that the operator $ P_{a, b} $ is considered as a vector field polynomial on a stratified and type-2 Lie algebra. Indeed, let
$$ \mathcal{G}_1=\mathrm{Vect}\{H_{vv}, H_{xx}, J_{vv}, J_{xx}\}\, \text{ and }\, \mathcal{G}_2=\mathrm{Vect}\{J,J_J\}\,,$$
where the matrices mentioned above are defined in \eqref{matrix 1} and \eqref{matrix 2}, and define the subalgebra
$\mathcal{G}_3=\mathrm{Vect}\{K,K_J\}\,,$
with $$K=[J,H_{vv}]=\begin{pmatrix}
0&0&1&0\\
0&0&0&1\\
1&0&0&0\\
0&1&0&0
\end{pmatrix} \,\text{ and }\,K_J=[J_J, H_{vv}]=\begin{pmatrix}
0&0&0&-1\\
0&0&1&0\\
0&-1&0&0\\
1&0&0&0
\end{pmatrix}.$$
These matrices generate the Lie algebra $ \mathcal{G} = \mathcal{G}_1 \oplus \mathcal{G}_2 \oplus \mathcal{G}_3 $ of dimension $ 8 $ used in \cite{zk2}.
\end{remarque}
\section{Precise estimates in several regimes of the exponential norm}
\subsection{Asymptotic expansion as $t \to 0^+$}
In this part, we will find a complete asymptotic expansion for the exponential norm of $ -tM_{a, b} $ when $ t \to 0^+ $. We will expand the quantities $ T $ and $ S $ when $ t \to 0^+ $ which define the norm of the matrix $ \ee^{- tM_{a, b}} $.
\begin{prop}
\label{prop_developpement}
The quantities $T$ and $S$ in \eqref{eq_def_T} and \eqref{eq_def_S} can be expressed as
\[
	T = \sum_{k = 0}^\infty |A|\tau_k t^{2k}, \quad S = \sum_{k=1}^\infty |A|^2\sigma_k t^{2k},
\]
where each $\tau_k$ and $\sigma_k$ can be expressed as a polynomial in $A_1 = 1-b^2-4a$ and $|A|^2 = (1-b^2-4a)^2 + 4b^2$ (and therefore as a polynomial in $a$ and $b^2$).
\end{prop}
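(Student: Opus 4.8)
The plan is to expand both $T$ and $S$ as Taylor series in $t$; since $\cosh$ and $\cos$ are even functions, only even powers $t^{2k}$ occur automatically, so the real content is to show that the coefficients are polynomials in $A_1$ and $|A|^2$ with the stated explicit factors $|A|$ and $|A|^2$ pulled out. The coefficient of $t^{2k}$ in $\cosh(c_1 t)$ is $c_1^{2k}/(2k)!$ and in $\cos(c_2 t)$ is $(-1)^k c_2^{2k}/(2k)!$, so everything reduces to understanding $c_1^{2k}$ and $(-1)^k c_2^{2k}$. Setting $\alpha = c_1^2$ and $\beta = -c_2^2$, the relations $c_1^2 - c_2^2 = A_1$ and $c_1^2 + c_2^2 = |A|$ from \eqref{sys 1} give $\alpha + \beta = A_1$, $\alpha - \beta = |A|$, and $\alpha\beta = -c_1^2 c_2^2 = -\tfrac14(|A|^2 - A_1^2) = -b^2$; thus $\alpha,\beta$ are the two roots of $X^2 - A_1 X - b^2 = 0$.

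The key observation is then a power-sum dichotomy. Define $p_k = \alpha^k + \beta^k$ and $q_k = (\alpha^k - \beta^k)/(\alpha - \beta)$. Both satisfy the recurrence $x_k = A_1 x_{k-1} + b^2 x_{k-2}$, with $p_0 = 2,\ p_1 = A_1$ and $q_0 = 0,\ q_1 = 1$; hence $p_k$ and $q_k$ are polynomials in $A_1$ and $b^2 = \tfrac14(|A|^2 - A_1^2)$, that is, in $A_1$ and $|A|^2$. Since $(-1)^k c_2^{2k} = (-c_2^2)^k = \beta^k$, we get $c_1^{2k} + (-1)^k c_2^{2k} = p_k$ and $c_1^{2k} - (-1)^k c_2^{2k} = \alpha^k - \beta^k = |A|\,q_k$, so that
\begin{align*}
\cosh(c_1 t) + \cos(c_2 t) = \sum_{k\geq 0}\frac{p_k}{(2k)!}t^{2k}, \qquad \cosh(c_1 t) - \cos(c_2 t) = |A|\sum_{k\geq 1}\frac{q_k}{(2k)!}t^{2k}.
\end{align*}
The cancellation visible here---the symmetric combination is a polynomial in $|A|^2$, while the antisymmetric one carries exactly one explicit factor $|A| = \alpha - \beta$---is the heart of the matter.

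For $T$ I would rewrite \eqref{eq_def_T} as $T = \tfrac12|A|(\cosh c_1 t + \cos c_2 t) + \tfrac12(2 - A_1)(\cosh c_1 t - \cos c_2 t)$ and substitute; factoring out $|A|$ gives $\tau_k = \tfrac{1}{(2k)!}\bigl(\tfrac12 p_k + \tfrac12(2-A_1)q_k\bigr)$, a polynomial in $A_1$ and $|A|^2$ (with $\tau_0 = 1$, recovering the Remark). For $S$ I would work not from \eqref{eq_def_S} but from the equivalent intermediate form established in the proof of Theorem \ref{thm_2,4},
\begin{align*}
S &= 4a(\cosh c_1 t - \cos c_2 t)^2 + \tfrac12(2-A_1)|A|(\sinh^2 c_1 t + \sin^2 c_2 t)\\
&\qquad + \tfrac12|A|^2(\sinh^2 c_1 t - \sin^2 c_2 t),
\end{align*}
in which a factor $|A|^2$ is manifest in every term. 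The first term equals $4a|A|^2\bigl(\sum_{k\geq 1}\tfrac{q_k}{(2k)!}t^{2k}\bigr)^2$; using $\sinh^2 c_1 t + \sin^2 c_2 t = \tfrac12(\cosh 2c_1 t - \cos 2c_2 t)$ and $\sinh^2 c_1 t - \sin^2 c_2 t = \tfrac12(\cosh 2c_1 t + \cos 2c_2 t - 2)$ together with the doubled-argument expansions (which merely replace $c_j^{2k}$ by $4^k c_j^{2k}$) shows that the second and third terms equal $|A|^2$ times $t^2$-series with coefficients $\tfrac14(2-A_1)4^k q_k/(2k)!$ and $\tfrac14\,4^k p_k/(2k)!$ respectively. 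Since $a = \tfrac{1}{16}(4 - 4A_1 - |A|^2 + A_1^2)$ is itself a polynomial in $A_1$ and $|A|^2$, each $\sigma_k$ is such a polynomial; the constant term vanishes because $q_0 = 0$, the $-2$ removes the $k=0$ contribution in the third term, and the first term starts at $t^4$, giving $S = \sum_{k\geq 1}|A|^2\sigma_k t^{2k}$.

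The one real subtlety, and the step I would treat most carefully, is the parity bookkeeping: one must check that every occurrence of $|A|$ enters with the correct parity, so that after removing the explicit prefactors $|A|$ (for $T$) and $|A|^2$ (for $S$) no residual odd power of $|A| = \sqrt{|A|^2}$ survives. Using the intermediate form of $S$ is what makes this transparent, since there the factor $|A|^2$ sits visibly in front of each summand; arguing directly from \eqref{eq_def_S} would instead require proving that $8a(1 - \Re(\cosh ct))$ and the $\cosh 2c_1 t + \cos 2c_2 t$ terms combine into something divisible by $|A|^2$, which is considerably less immediate.
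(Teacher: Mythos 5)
Your proof is correct, but it takes a genuinely different route from the paper's where it matters, namely in the treatment of $S$. (For $T$ the two arguments essentially coincide: your $\tfrac{1}{(2k)!}\bigl(\tfrac12 p_k + \tfrac12(2-A_1)q_k\bigr)$ is the paper's $\tfrac{1}{2(2k)!}\bigl(B_{+,k} + (2-A_1)B_{-,k}/|A|\bigr)$ in different notation.) The paper argues directly from the final form \eqref{eq_def_S}: it expands $B_{\pm,k} = (c_1^2)^k \pm (-1)^k(c_2^2)^k = 2^{-k}\bigl((A_1+|A|)^k \pm (A_1-|A|)^k\bigr)$ and $\Re(A^k)$ by the binomial formula, and then must verify a genuine cancellation --- in the contribution $8a\bigl(2^{2(k-1)}B_{+,k} - \Re(A^k)\bigr)$ coming from $8a(1-\Re(\cosh ct))$, the coefficients of $A_1^k$ (both equal to $2^{k-1}$) cancel, and only this makes that piece divisible by $|A|^2$. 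You sidestep exactly this step in two ways. First, you expand the intermediate expression for $S$ appearing in the proof of Theorem \ref{thm_2,4} (before the double-angle identities are applied), in which $|A|^2$ visibly factors out of each of the three summands; this is legitimate, since the paper derives \eqref{eq_def_S} from that expression. Second, in place of binomial expansions you observe that $\alpha = c_1^2$ and $\beta = -c_2^2$ are the roots of $X^2 - A_1X - b^2$, so that $p_k = \alpha^k + \beta^k$ and $q_k = (\alpha^k - \beta^k)/(\alpha - \beta)$ satisfy the recurrence $x_k = A_1x_{k-1} + b^2x_{k-2}$ and are therefore automatically polynomials in $A_1$ and $b^2 = \tfrac14(|A|^2 - A_1^2)$; this disposes of the $|A|$-parity bookkeeping in one stroke (in the paper's notation $p_k = B_{+,k}$ and $|A|q_k = B_{-,k}$). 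The trade-off: your argument makes the divisibility by $|A|^2$ structural rather than the outcome of a computed cancellation, which is cleaner and less error-prone; the paper's explicit closed formulas for $B_{\pm,k}$ and $\Re(A^k)$, though heavier here, are reused verbatim in the proof of Proposition \ref{prop_Taylor} to evaluate $\tau_k$ and $\sigma_k$ for $k \leq 3$, values you would instead obtain by iterating your recurrence (equally easy, but a separate small computation).
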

\begin{proof}
To express the quantities $T$ and $S$, we introduce the notation
\begin{align*}
	B_{+,k} &= (c_1^2)^k + (-1)^k(c_2^2)^k = 2^{-k}\left((A_1 + |A|)^k + (A_1 - |A|)^k\right) \\
	B_{-,k} &= (c_1^2)^k - (-1)^k(c_2^2)^k = 2^{-k}\left((A_1 + |A|)^k - (A_1 - |A|)^k\right).
\end{align*}
We start by expressing $ T $ using the series expansion of the functions $ \cosh (c_1t) $ and $ \cos (c_2t) $:
\begin{align*}
T&=\frac{1}{2}\left((|A| -A_1+2)\cosh(c_1 t) + (|A|+A_1-2)\cos(c_2 t)\right)\\
&=\frac{1}{2}(|A| -A_1+2)\sum_{k=0}^{+\infty}\frac{(c_1t)^{2k}}{(2k)!}+\frac{1}{2}(|A| +A_1-2)\sum_{k=0}^{+\infty} (-1)^k\,\frac{(c_1t)^{2k}}{(2k)!}\\
&=\frac{1}{2}\sum_{k=0}^{+\infty}(\vert A\vert B_{+,k}+(2-A_1)B_{-,k})\frac{t^{2k}}{(2k)!}.
\end{align*}
By comparison, we then obtain
\[
	(2k)!|A|\tau_k = \frac{1}{2}\left(|A|B_{+,k} + (2-A_1)B_{-,k}\right)\quad\forall k\in \mathbb{N}.
\]
Similarly we can express the coefficients of $ S $ for all $ k \in \mathbb{N}^* $:
\begin{align*}
	(2k)!|A|^2\sigma_k &= -8a\Re(A^k) + 2^{2(k-1)}\left((2-A_1)|A|B_{-, k} + (|A|^2 + 8a)B_{+, k}\right)
	\\ &= 8a(2^{2(k-1)}B_{+, k} - \Re(A^k)) + 2^{2(k-1)}\left((2-A_1)|A|B_{-, k} + |A|^2 B_{+, k}\right),
\end{align*}
and $\sigma_0=0$ because $1-\Re(\cosh 0) = \cosh 0 - \cos 0 = \cosh 0 + \cos 0 - 2 = 0$.

Now, we will calculate $ \Re (A^k), B_{+, k} $ and $ B_{-, k} $, for all $ k \in \Bbb N $. 
By the binomial formula,
\begin{align*}
\Re (A^k)=&\Re ((A_1+\ii A_2)^k)\\
=&\Re \left( \sum_{j=0}^{k}\binom{k}{j}A_1^{k-j}(\ii A_2)^j\right)\\
=&\sum_{j=0}^{k}\binom{k}{j}A_1^{k-j}\Re ((\ii A_2)^j)\\
=&\sum_{j=0}^{\lfloor k/2\rfloor} \binom{k}{2j} (-1)^j A_1^{k-2j} A_2^{2j}
\end{align*}
Similarly, we can express $ B_{+, k} $ and $ B_{-, k} $
\begin{align*}
B_{+, k} &= 2^{1-k}\sum_{j = 0}^{\lfloor k/2\rfloor} \binom{k}{2j}A_1^{k-2j}|A|^{2j}, \\
	B_{-,k} &= 2^{1-k}\sum_{j = 0}^{\lfloor (k-1)/2\rfloor} \binom{k}{2j+1}A_1^{k-2j-1}|A|^{2j+1}.
\end{align*}
Every term in $|A|\tau_k$ is divisible by $|A|$ leaving a polynomial in $A_1$ and $|A|^2$. As for $|A|^2\sigma_k$, the latter terms with $|A|B_{-, k}$ and $|A|^2B_{+,k}$ are divisible by $|A|^2$ leaving polynomials in $A_1$ and $|A|^2$. To complete the proof we must study the first terms of $ \vert A \vert^2 \sigma_k $. Since $A_2^2 = |A|^2 - A_1^2$, we can write
\begin{align*}
	\Re(A^k) &= \sum_{j=0}^{\lfloor k/2 \rfloor} \binom{k}{2j}(-1)^j A_1^{k-2j}(|A|^2 - A_1^2)^{j}
	\\ &= \sum_{j=0}^{\lfloor k/2 \rfloor}\sum_{\ell = 0}^{j} \binom{k}{2j}\binom{j}{\ell}(-1)^\ell A_1^{k-2j}(A_1^2)^{j-\ell }(|A|^2)^\ell.
\end{align*}
The coefficient of $A_1^k$, given by the terms where $\ell = 0$, is
\[
	\sum_{j = 0}^{\lfloor k/2\rfloor} \binom{k}{2j} = \frac{1}{2}\left((1+1)^k - (1-1)^k\right) = 2^{k-1},
\]
which coincides exactly with the coefficient of $A_1^k$ in $2^{2(k-1)}B_{+, k}$. These terms cancel, allowing us to conclude that each $|A|^2\sigma_k$ is indeed divisible by $|A|^2$ leaving a polynomial in $A_1$ and $|A|^2$, completing the proof of the proposition.
\end{proof}
\begin{remarque}\label{rem-generale}
Since $ \vert A_1 \vert $ is less than or equal to $ \vert A \vert $ and $$a=(1-b^2-A_1)/4\leq \langle A\rangle/4, $$ it is obvious that for any $N \in \Bbb{N}$ and $R > 0$, there exists $C = C(N, R) > 0$ such that when $\langle A\rangle t^2 \leq R$,
\[
	\left|T - \sum_{k = 0}^N |A|\tau_k t^{2k}\right| \leq C\langle	A\rangle^{N+2} |t|^{2N+2}
\]
and
\[
 \left|S - \sum_{k=1}^N | A|^2\sigma_k t^{2k}\right|\leq  C \langle A\rangle^{N+3} |t|^{2N+2}.
\]
 Since one can compute that $\sigma_1 = \tau_0 = 1$, we obtain (when $\langle A\rangle t^2$ is bounded)
\begin{align*}
	\ee^{t}\|\ee^{-tM_{a,b}}\|^2 & = 1 + \sum_{k = 1}^N \tau_k t^{2k} + \BigO(\langle A\rangle^{N+1}t^{2N+2}) 
	\\ & \qquad + t\sqrt{1 + \sum_{k=1}^{N-1} \sigma_{k+1}t^{2k} + \BigO(\langle A\rangle^{N+1}t^{2N})}.
\end{align*}
For $\langle A\rangle t^2$ sufficiently small, we can expand the square root in a Taylor series and multiply by the Taylor series for $\ee^{-t}$ to obtain a complete asymptotic expansion for $\|\ee^{-tM_{a,b}}\|^2$ (and therefore for $\|\ee^{-tM_{a,b}}\|$ if one takes another square root).
\end{remarque}
To end this part, we will give the demonstration of Proposition \ref{prop_Taylor}.
\begin{proof}[Proof of Proposition \ref{prop_Taylor}]
We notice that in Proposition \ref{prop_Taylor}, we seek the asymptotic development of the norm $ \| \ee^{- tM_{a, b}} \| $ up to the order $ 6 $. For that and according to the Remark \ref{rem-generale} it suffices to calculate the terms $ \tau_k $ and $ \sigma_k $ to the order $ 3 $.
We will start by calculating the coefficients $ B_{+, k} $ and $ B_{-, k} $ following the equalities in Proposition \ref{prop_developpement}. By simple calculations we can show that
\begin{align*}
	\{B_{+, k}\}_{k=0}^3 &= \{2, A_1, \frac{1}{2}(A_1^2 + |A|^2), \frac{1}{4}(A_1^3 + 3A_1|A|^2)\}
	\\
	\{B_{-, k}\}_{k=0}^3 &= \{0, |A|, A_1|A|, \frac{1}{4}(3A_1^2|A| + |A|^3)\}.
\end{align*}
Using the values of $ B_{+, k} $ and $ B_{-, k} $ with $ k = 0, 1, 2, 3$, we can calculate
\begin{align*}
	\tau_0 &= \frac{1}{2|A|}(2|A| + 0) = 1, \\
	\tau_1 &= \frac{1}{4|A|}(|A|A_1 + (2-A_1)|A|) = \frac{1}{2}, \\
	\tau_2 &= \frac{1}{48|A|}\left(\frac{1}{2}(A_1^2 + |A|^2) + (2-A_1)A_1|A|\right) = \frac{1}{24}(1-4a)
\end{align*}
And $ \tau_3 $ is 
\begin{align*}
	\tau_3 &=\frac{1}{6!}\frac{1}{2\vert A\vert}(\vert A\vert B_{+,3}+(2-A_1)B_{-,3})\\
	&=\frac{1}{5760}(2A_1(\vert A\vert^2-A_1^2)+6A_1^2+2\vert A\vert^2)\\
	&=\frac{1}{5760}(8A_1b^2+6A_1^2+2A_1^2+2A_2^2)\\
	&=\frac{1}{720}(A_1(b^2+A_1)+b^2)\\
	&=\frac{1}{720}(1-4a(2-4a-b^2)).
\end{align*}
We continue by calculating
\begin{align*}
\sigma_1 &= \frac{1}{2|A|^2}\left(-8a A_1 + (2-A_1)|A|^2 + (|A|^2 + 8a)A_1\right) = 1
\end{align*}
and $ \sigma_2$
\begin{align*}
\sigma_2 &=\frac{1}{24\vert A\vert^2}\left(8a\Re (A^2)+4((2-A_1)\vert A\vert B_{-,2}+(\vert A\vert^2+8a)B_{+,k})\right)\\
&=\frac{1}{24\vert A\vert^2}\left(8a(-A_1^2+A_2^2+2A_1^2+2\vert A\vert^2)\right)\\
&+\frac{1}{24\vert A\vert^2}\,4\left( (2-A_1)A_1\vert A\vert^2+\frac{1}{2}(A_1^2+\vert A\vert^2) \right)\\
&=\frac{1}{24}\left( 24a+4(2A_1+\frac{1}{2}(\vert A\vert^2-A_1^2)) \right)\\
&=\frac{1}{24}\left(24a+8(1-4a) \right)\\
&=\frac{1}{3}(1-a).
\end{align*}
(Even the computation for $ \sigma_3= (4a^2 - 17a + 4 + ab^2)/90 $  becomes somewhat long.)
According to Remark \ref{rem-generale}, we can expand $ \Vert \ee^{- tM_{a, b}} \Vert $ when $ t \to 0^+ $ to order $ 6 $
\begin{align*}
	\|\ee^{-tM_{a,b}}\|^2 &=\ee^{-t}\left(  1 + \tau_1t^2 + \tau_2 t^4 + \tau_3 t^6 + \BigO(\langle A\rangle^4 \,t^8) + t\sqrt{1 + \sigma_2 t^2 + \sigma_3 t^4 + \BigO(\langle A\rangle^4\,t^6)}\right)
	\\ &=\ee^{-t}\left( 1 + t + \tau_1 t^2 + \frac{1}{2}\sigma_2 t^3 + \tau_2 t^4 + \left(\frac{1}{2}\sigma_3 - \frac{1}{8}\sigma_2^2\right)t^5 +  \tau_3 t^6 + \BigO(\langle A\rangle^4\, t^7)\right).
\end{align*}
We multiply by the Taylor series for $ \ee^{- t} $ to order $ 6 $. The coefficient of $ t ^ 0 $ in $ \| \ee^{- tM_{a, b}} \|^2 $ is therefore $ 1 $, while the coefficient of $ t $ is $ 1-1 = 0 $. The coefficient of $ t ^ 2 $ is given by
\[
	\frac{1}{2}\tau_1 - 1 + \frac{1}{2} = 0,
\]
and the coefficient of $t^3$ is
\[
	\frac{1}{2}\sigma_2 - \tau_1 + \frac{1}{2} - \frac{1}{6} = \frac{1}{6}(1-a) - \frac{1}{6} = -\frac{a}{6}.
\]
The coefficient of $ t ^ 4 $ is
\[
\frac{1}{24}+\tau_2+\frac{1}{2}\tau_1-\frac{\sigma_2}{2}-\frac{1}{6}=\frac{1}{12}+\frac{1}{4}-\frac{1}{3}+\frac{a}{6}-\frac{a}{6}=0.
\] 
Next, the coefficient of $ t ^ 5 $ is
\begin{align*}
&\frac{\sigma_3}{2}-\frac{\sigma_2^2}{8}-\frac{1}{120}-\tau_2+\frac{1}{24}+\frac{\sigma_2}{4}-\frac{\tau_1}{6}\\
&=\left( \frac{4}{180}-\frac{1}{72}\right)\,a^2+\frac{1}{180}\,ab^2+\left( -\frac{17}{180}+\frac{2}{72}+\frac{4}{24}-\frac{1}{12}\right)\,a+\frac{4}{180}-\frac{1}{72}-\frac{1}{120}\\
&=\frac{1}{120}\,a^2+\frac{1}{180}\,ab^2+\frac{1}{60}\,a.
\end{align*}
Finally, the coefficient of $ t ^ 6 $ is
\begin{align*}
&\tau_3+\frac{1}{720}-(\frac{\sigma_3}{2}-\frac{\sigma_2^2}{8})-\frac{1}{120}+\frac{\tau_2}{2}+\frac{\tau_1}{24}-\frac{\sigma_2}{12}\\
&=\left(\frac{16}{720}-\frac{4}{180}+\frac{1}{72} \right)\,a^2
+\left( \frac{4}{720}-\frac{1}{180}\right)\,ab^2\\&+\left( -\frac{8}{720}+\frac{17}{180}-\frac{4}{48}\right)\,a+\left(\frac{2}{720}-\frac{4}{180}+\frac{1}{72}-\frac{1}{120}-\frac{1}{36}+\frac{1}{24} \right)\\
&=\frac{1}{72}\,a^2.
\end{align*}Therefore
\[
	\|\ee^{-tM_{a,b}}\|^2 = 1 - \frac{a}{6}t^3 + \left(\frac{1}{120}\,a^2+\frac{1}{180}\,ab^2+\frac{1}{60} \,a\right)t^5 +\frac{a^2}{72}\,t^6+ \BigO(\langle A\rangle^4 t^7).
\]
and taking a square root, using the Taylor expansion $ \sqrt{1 + x} = 1 + x / 2 -x^2/8 + \BigO (x^3) $, we end the proof of Proposition \ref{prop_Taylor}.
\end{proof}
\subsection{Asymptotics as $ b $ tends to infinity.}
If $ M_ {a, b} $ were self-adjoint, we would have the equality $ \| \ee^{- tM_{a,b}} \|^2 = \ee^{- (1- c_1) t} $ for every $ t \geq 0 $. To study the extent to which this norm differs with the self-adjoint case, we compare $\|\ee^{-tM_{a,b}}\| $ with $\ee^{-(1-c_1)t/2}=\ee^{-t\Re\lambda_1}$, where $\lambda_1 = \frac{1}{2}(1-\ii b - c_1 - \ii c_2)$ is an eigenvalue of $ M_ {a, b} $ with minimal real part. In particular, we will study the regime when $ \vert b \vert \to + \infty $, corresponding to a large magnetic field.
\begin{proof}[Proof of Proposition \ref{prop_large_b}]
According to Theorem \ref{thm_2,4}, we have 
$$\|\ee^{-tM_{a,b}}\|^2 = \frac{1}{|A|}\ee^{-t}\left(T + \sqrt{S}\right),$$
where $ T $ and $ S $ are defined in \eqref{eq_def_S} and \eqref{eq_def_T}. Multiplying the previous equality by $ \ee^{2t \Re \lambda_1} $, we get
\begin{align*}
	\ee^{2t\Re \lambda_1}\|\ee^{-tM_{a,b}}\|^2 &= \ee^{2t\Re\lambda_1}\frac{\ee^{-t}}{|A|}(T + \sqrt{S})
	\\ &= \frac{\ee^{-c_1 t}}{|A|}\,T + \sqrt{\frac{\ee^{-2c_1 t}}{|A|^2}\,S}.
\end{align*}
We consider the regime when $ b \to + \infty $. We recall that
$$ T=\frac{1}{2}\left((|A| -A_1+2)\cosh(c_1 t) + (|A|+A_1-2)\cos(c_2 t)\right),$$
We expand out
\begin{equation}\label{eq_T_expand}
	\frac{\ee^{-c_1t}}{|A|}T = \frac{1}{4}\left(1 + \frac{2-A_1}{|A|}\right)(1+\ee^{-2c_1 t}) + \frac{1}{2}\left(1 - \frac{2-A_1}{|A|}\right)\ee^{-c_1 t}\cos c_2 t
\end{equation}
Note that
\[
	|A|^2 = (1-4a-b^2)^2 + 4b^2 = b^4\left(1+2(1+4a)b^{-2} + \BigO(\langle a\rangle^2 b^{-4})\right),
\]
so the Taylor expansion of the square root function gives (for $\langle a\rangle b^{-2}$ sufficiently small)
\[
	|A| = b^2\left(1+(1+4a)b^{-2} + \BigO(\langle a\rangle^2 b^{-4})\right).
\]
Along with the geometric series, we obtain
\begin{equation}\label{eq_2-A1AA}
	\frac{2-A_1}{|A|} = \frac{1 + (1+4a)b^{-2}}{1+(1+4a)b^{-2} + \BigO(\langle a\rangle^2 b^{-4})} = 1 + \BigO(\langle a\rangle^2 b^{-4}).
\end{equation}
As $$ \Re \cosh (ct) = \cosh (c_1t) \cos (c_2t) ,$$
we have then
\begin{align*}
	S &= 4a - 8a\cosh c_1t \cos c_2t + \frac{1}{4}\left(|A|^2 - (2-A_1)|A| + 8a\right)\cos 2c_2t 
	\\ & \qquad + \frac{1}{4}(|A|^2 + |A|(2-A_1)+ 8a)\cosh 2c_1t - \frac{1}{2}|A|^2
\end{align*}
We can write $ | A |^{- 2} \ee^{- 2c_1t} S $ in the following form:
\begin{align*}
	\frac{\ee^{-2c_1 t}}{|A|^2}S &= \frac{4a}{|A|^2}\left(\ee^{-2c_1 t} - (\ee^{-c_1 t} + \ee^{-3c_1 t})\cos c_2t\right) + 
	\\ & \qquad + \frac{1}{4}\left(1-\frac{2-A_1}{|A|} + \frac{8a}{|A|^2}\right)\ee^{-2c_1 t}\cos 2c_2 t
	\\ & \qquad + \frac{1}{8}\left(1 + \frac{2-A_1}{|A|} + \frac{8a}{|A|^2}\right)(1 + \ee^{-4c_1 t}) - \frac{1}{2}\ee^{-2c_1 t}.
\end{align*}
We replace the last $\frac{1}{2}\ee^{-2c_1 t}$ with
\[
	\frac{1}{2}\ee^{-2c_1 t} = \frac{1}{4}\left(1 + \frac{2-A_1}{|A|} + \frac{8a}{|A|^2} + 1-\frac{2-A_1}{|A|} - \frac{8a}{|A|^2}\right)\ee^{-2c_1 t}
\]
and we recombine terms to obtain
\begin{equation}\label{eq_S_expand}
\begin{aligned}
	\frac{\ee^{-2c_1 t}}{|A|^2}S &=\frac{4a}{|A|^2}\left(2\ee^{-2c_1 t} - (\ee^{-c_1 t} + \ee^{-3c_1 t})\cos c_2t\right) + 
	\\ & \qquad + \frac{1}{4}\left(1-\frac{2-A_1}{|A|} + \frac{8a}{|A|^2}\right)\ee^{-2c_1 t}(\cos 2c_2 t - 1)
	\\ & \qquad + \frac{1}{8}\left(1 + \frac{2-A_1}{|A|} + \frac{8a}{|A|^2}\right)(1 - \ee^{-2c_1 t})^2.
\end{aligned}
\end{equation}
To obtain a time-independent estimate, we use \eqref{eq_T_expand}, \eqref{eq_2-A1AA}, and \eqref{eq_S_expand} to obtain
\begin{align*}
	\ee^{(1-c_1)t}\|\ee^{-tM_{a,b}}\|^2 - 1 &= \frac{\ee^{-c_1 t}}{|A|} T-1 + \sqrt{\frac{\ee^{-2c_1 t}}{|A|^2}S} 
	\\ &= \frac{1}{2}(1+\BigO(\langle a\rangle^2 b^{-4}))(\ee^{-2c_1 t}-1) + \BigO(\langle a\rangle^2 b^{-4}) 
	\\ & \qquad + \frac{1}{2}(1+\BigO(\langle a\rangle^2 b^{-4}))\sqrt{(1-\ee^{-2c_1 t})^2 + \BigO(\langle a\rangle^{2}b^{-4})}, 
\end{align*}
we note that we replaced $ 1 $ by
$$1=\frac{1}{2} \left(1+\frac{2-A_1}{|A|}+1-\frac{2-A_1}{|A|}\right) $$ in the previous equality.

If we set $x = 1-\ee^{-2c_1t} \in [0, 1)$, we have shown that
\[
	\ee^{(1-c_1)t}\|\ee^{-tM_{a,b}}\|^2 - 1 = \frac{1}{2}(-x + \sqrt{x^2 + \BigO(\langle a \rangle^2 b^{-4})}) + \BigO(\langle a \rangle^2b^{-4}).
\]
But for $r \in \Bbb{R} \backslash \{0\}$, the absolute value of the function
\[
	  \varphi (x)=-x + \sqrt{x^2 + r} = \frac{r}{x + \sqrt{x^2 + r}}
\]
is decreasing when $ x \geq 0 $ if $ r \geq 0 $ or $ x \geq (-r)^{1/2} $ if $ r <0 $. The maximum of $ \vert \varphi (x) \vert $ is obtained when $ r \geq 0 $ if $ x = 0 $ and if $ r <0 $, the maximum is reached when $ x = \vert r \vert^{1/2} $. In either case the maximum is $ \vert r \vert^{1/2} $, and we have shown
\[
	\left|\ee^{(1-c_1)t}\|\ee^{-tM_{a,b}}\|^2 - 1\right| = \BigO(\langle a \rangle b^{-2}).
\]
The lower bound $0 \leq \ee^{(1-c_1)t}\|\ee^{-tM_{a,b}}\|^2 - 1$ is trivial from testing on an eigenvector of $M_{a,b}$ with eigenvalue $\lambda_1 = \frac{1}{2}(1-\ii b - c_1 - \ii c_2)$.
This completes the proof of the Proposition \ref{prop_large_b}.
\end{proof}
\subsection{Asymptotics in long time.}
In this part, we calculate the exact value of $\lim\limits_{t\to +\infty}\, \ee^{(1-c_1)t}\,\Vert \ee^{-tM_{a,b}}\Vert^2$, which measures in a sense the norm of the spectral projector of $ M_{a, b} $ associated with the eigenvalue $ \lambda_1 $, where $\lambda_1 = \frac{1}{2}(1-\ii b - c_1 - \ii c_2)$ is one of the eigenvalues of $M$ with minimal real part.
\begin{proof}[Proof of Proposition \ref{prop_long_t}]
Using equality \eqref{eq_S_expand}, we get
\begin{align*}
	\ee^{-2c_1t}S &= \frac{1}{8}(|A|^2 + 8a + |A|(2-A_1))(\ee^{-4c_1t} + 1) - 4a(\ee^{-3c_1t} + \ee^{-c_1t})\cos c_2t 
	\\ & \qquad+ (4a - \frac{1}{2}|A|^2 + \frac{1}{4}(|A|^2 + 8a - |A|(2-A_1))\cos 2c_2 t)\ee^{-2c_1t}.
\end{align*}
We seek to obtain a precise estimate when $ t \to + \infty $, using
\[
	\frac{\ee^{-2c_1t}}{|A|^2}S = \frac{1}{8}\left(1 + \frac{2-A_1}{|A|^2} + \frac{8a}{|A|^2}\right) - \frac{1}{2} \ee^{-2c_1 t} + \BigO(\langle a\rangle^2 b^{-4}\ee^{-c_1t}).
\] 
Similarly, we can show
\[
	\frac{\ee^{-c_1 t}}{|A|}T = \frac{1}{4}\left(1 + \frac{2-A_1}{|A|}\right) + \frac{1}{2}\ee^{-2c_1t} + \BigO(\langle a\rangle^2 b^{-4}\ee^{-c_1 t}).
\]
By defining
\[
	R_0 = \frac{1}{8}\left(1 + \frac{2-A_1}{|A|} + \frac{8a}{|A|^2}\right)
\]
and by noting that $ R_0 = \frac {1}{4} (1+ \BigO (\langle a \rangle ^ 2 b ^ {- 4}) $, we get
\begin{align*}
	\ee^{2t\Re \lambda_1 }\|\ee^{-tM_{a,b}}\|^2 &= \frac{\ee^{-c_1 t}}{|A|} T + \sqrt{\frac{\ee^{-2c_1 t}}{|A|^2}S} 
	\\ &= \frac{1}{4}\left(1 + \frac{2-A_1}{|A|}\right) + \frac{1}{2}\ee^{-2c_1 t} + \BigO(\langle a\rangle^2 b^{-4}\ee^{-c_1 t})
	\\ & \qquad + \sqrt{R_0}\sqrt{1 - \frac{1}{2R_0}\ee^{-2c_1 t} + \BigO(\langle a\rangle^2 b^{-4}\ee^{-c_1 t})}\\
	&=R_1+\left( \frac{1}{2}-\frac{1}{4\sqrt{R_0}}\right)\ee^{-2c_1t}+\BigO(\langle a\rangle^2 b^{-4}\ee^{-c_1 t})	,
\end{align*}
where $ R_1 $ is defined by
$$R_1=\frac{1}{4}\left(1 + \frac{2-A_1}{|A|}\right) +\sqrt{R_0}. $$
By the Taylor expansion of the square root function, we obtain that there is $ C> 0 $ such that if
$$E(t)=\ee^{-2c_1t}+\langle a\rangle^2\,b^{-4}\ee^{-c_1t}\leq \displaystyle \frac{1}{C}, $$
 then
\begin{align}
	\left|\frac{1}{\sqrt{R_1}}\ee^{t\Re \lambda_1}\|\ee^{-tM_{a,b}}\|-1\right| \leq CE(t).\label{equality_tlong.3}
\end{align}
Now, we try to simplify the expressions of $ R_0 $ and $ R_1 $. For this we need the following lemma:
\begin{lem}\label{lem_simp_1.3}
Let $ a> 0 $ and $ b \neq 0 $.
\begin{align}
\vert A\vert +2-A_1&=2(c_2^2+1),\label{simp_1.3}\\
4a&=(1+c_2^2)(1-c_1^2),\label{simp_2.3}\\
\vert \lambda_1\vert^2&=(1+c_2^2) (1-c_1)^2 /4\label{simp_3.3}.
\end{align}
\end{lem}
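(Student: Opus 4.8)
The plan is to derive all three identities directly from the three relations satisfied by $c_1$ and $c_2$ recorded in system \eqref{sys 1}, namely
\[
	c_1^2 - c_2^2 = A_1, \qquad 2c_1c_2 = A_2 = -2b, \qquad c_1^2 + c_2^2 = |A|,
\]
together with the expression $\lambda_1 = \frac{1}{2}\bigl((1-c_1) - \ii(b+c_2)\bigr)$ from \eqref{expression-auxiliaire1}. Each of the three formulas is purely algebraic, so I would simply treat them one at a time; no analysis is involved. I would also remark at the outset that since replacing $c$ by $-c$ changes $c_1 \mapsto -c_1$, $c_2 \mapsto -c_2$ and leaves every combination appearing below unchanged, the identities are independent of the chosen square root of $A$, consistent with the earlier discussion.

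For \eqref{simp_1.3}, I would subtract the first relation from the third to get $|A| - A_1 = (c_1^2 + c_2^2) - (c_1^2 - c_2^2) = 2c_2^2$, whence $|A| + 2 - A_1 = 2(c_2^2 + 1)$ immediately. For \eqref{simp_2.3}, the key is to eliminate $a$ and $b$ in favour of $c_1, c_2$: from the middle relation $b = -c_1c_2$, so $b^2 = c_1^2c_2^2$, and since $A_1 = 1 - b^2 - 4a$ by \eqref{eq_def_A}, I can write
\[
	4a = 1 - b^2 - A_1 = 1 - c_1^2c_2^2 - (c_1^2 - c_2^2).
\]
It then suffices to expand the target product $(1+c_2^2)(1-c_1^2) = 1 - c_1^2 + c_2^2 - c_1^2c_2^2$ and observe that the two sides agree.

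For \eqref{simp_3.3}, I would start from $|\lambda_1|^2 = \frac{1}{4}\bigl((1-c_1)^2 + (b+c_2)^2\bigr)$, read off from the real and imaginary parts of $\lambda_1$ above. The one step that is not completely mechanical — and hence the only point requiring care — is again the use of $b = -c_1c_2$, which gives the factorization $b + c_2 = c_2(1-c_1)$, so that $(b+c_2)^2 = c_2^2(1-c_1)^2$ and therefore
\[
	|\lambda_1|^2 = \frac{1}{4}\bigl((1-c_1)^2 + c_2^2(1-c_1)^2\bigr) = \frac{1}{4}(1-c_1)^2(1+c_2^2),
\]
as claimed. I do not expect any genuine obstacle: the entire lemma is bookkeeping with system \eqref{sys 1}, the only subtlety being to keep track of the sign relation $b = -c_1c_2$ (equivalently the WLOG sign choice $c_1 \geq 0$, $c_2 \leq 0$ fixed in the proof of Theorem \ref{thm q1}) when handling the cross term in \eqref{simp_3.3}.
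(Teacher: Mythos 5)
Your proposal is correct and follows essentially the same route as the paper: all three identities are obtained by the same substitutions from system \eqref{sys 1} ($|A|-A_1 = 2c_2^2$ for the first, $b^2 = c_1^2c_2^2$ for the second, and the factorization $b+c_2 = c_2(1-c_1)$ for the third). As a minor aside, your final expression $\frac{1}{4}(1-c_1)^2(1+c_2^2)$ for \eqref{simp_3.3} is the correct one, matching the lemma statement, whereas the paper's last displayed line contains a typo reading $(1-c_1^2)$ in place of $(1-c_1)^2$.
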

\begin{proof}
We start by showing  equality \eqref{simp_1.3}. Using the system of equations \eqref{sys 1},
\begin{align*}
\vert A\vert +2-A1=c_1^2+c_2^2+2-c_1^2+c_2^2=2(1+c_2^2).
\end{align*}
Then, we pass to show equality \eqref{simp_2.3}. Using the system of equations \eqref {sys 1} and the definition of $ A_1 = -b ^ 2-4a + 1 $, we obtain
\begin{align*}
4a=-b^2-A_1+1&=-c_1^2c_2^2-c_1^2+c_2^2+1\\
&=c_2^2(1-c_1^2)+1-c_1^2\\
&=(1-c_1^2)(1+c_2^2).
\end{align*}
Finally, we will show equality \eqref{simp_3.3}. Using the definition of $ \lambda_1 $ as a function of $ c_1, c_2 $ and $ b $ and replacing $ b $ by $ -c_1c_2 $, we have
\begin{align*}
\vert \lambda_1\vert^2&=\frac{1}{4}\left( (1-c_1)^2+(b+c_2)^2 \right)\\
&=\frac{1}{4}\left( (1-c_1)^2+(-c_1c_2+c_2)^2 \right)\\
&=\frac{1}{4}\left( (1-c_1)^2+c_2^2(1-c_1)^2 \right)\\
&=\frac{1}{4}(1+c_2^2)(1-c_1^2).
\end{align*}
This completes the demonstration of Lemma \ref{lem_simp_1.3}.
\end{proof}
We return to the proof of Proposition \ref{prop_long_t}. Using the equations in Lemma \ref{lem_simp_1.3}, $ R_0 $ takes the following form:
\begin{align*}
R_0 &= \frac{1}{8}\left(1 + \frac{2-A_1}{|A|} + \frac{8a}{|A|^2}\right)\\
&= \frac{1}{8\vert A\vert }\left(\vert A\vert +2-A_1 + \frac{8a}{|A|}\right)\\
&=\frac{(1+c_2^2)^2}{4\vert A\vert^2}.
\end{align*}
So $ \sqrt{R_0} = \displaystyle \frac{1 + c_2 ^ 2}{2 \vert A \vert} $. We end this proof by simplifying the expression of $ R_1 $ as follows:
\begin{align*}
R_1&=\frac{1}{4}\left(1 + \frac{2-A_1}{|A|}\right) +\sqrt{R_0}\\
&=\frac{1}{4\vert A\vert }(\vert A\vert+2-A_1)+\frac{c_2^2+1}{2\vert A\vert}\\
&=\frac{c_2^2+1}{\vert A\vert}\\
&=\frac{c_2^2+1}{c_1^2+c_2^2}.
\end{align*}
Inserting the simplified form of $ R_1 $ in \eqref{equality_tlong.3} completes the demonstration of Proposition \ref{prop_long_t}.
\end{proof}
\appendix
\section{Periodicity of $ \ee ^ {t \Re \lambda_1} \ee^{- t M_{a, 0}} $ when $ b = 0 $ and $ a> 1/4 $}
   When the magnetic field is zero and when $a > 1/4$, we notice that $ \ee^{t \Re \lambda_1} \ee^{- t M_{a, 0}} $ is periodic, as can be guessed from Figure \ref{b0_2} (which displays $\log \|\ee^{- t M_{a, 0}}\|$ for $a = 24 > 1/4$). Considering a similar graph for $ a = 0.2 < 1/4 $, the phenomenon of periodicity has disappeared. The aim of this section is to confirm this numerical observation and to express the period as a function of electrical parameter $a$, explaining where the hypothesis $ a> 1/4 $ comes into play.
   \begin{figure}[!h]
\begin{minipage}[c]{1\linewidth}
   \centering
      \includegraphics[width=.9\linewidth]{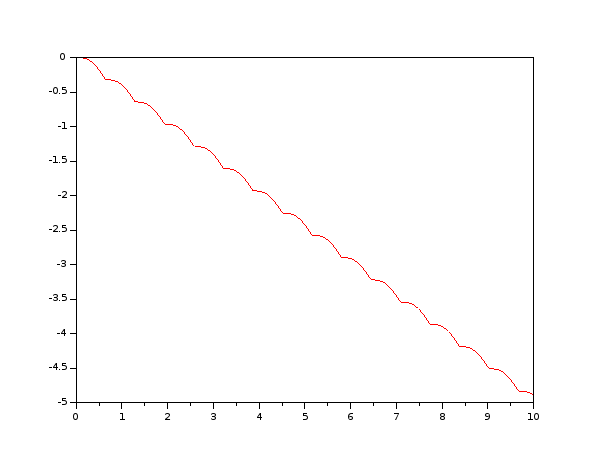} 
   \caption{Time behavior of $\log (\Vert \ee^{-t\,M_{a,0}} \Vert)$ with $ b = 0 $ and $ a = 24> 1/4 $}
   \label{b0_2}
   \end{minipage} \hfill
   \begin{minipage}[c]{1\linewidth}
     \includegraphics[width=.9\linewidth]{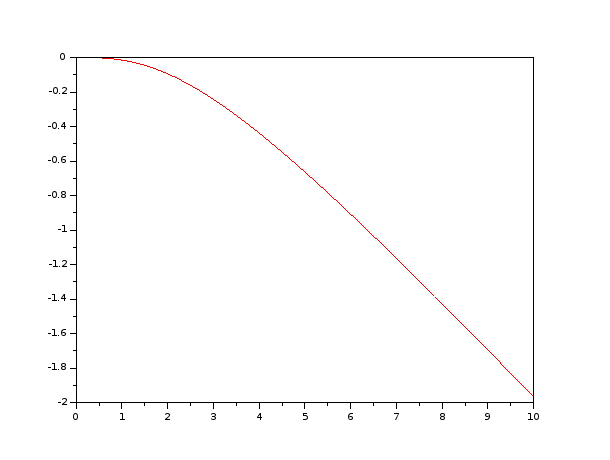} 
      \caption{Time behavior of $\log (\Vert \ee^{-t\,M_{a,0}} \Vert)$ with $b=0$ and $a=0.2<1/4$}
   \end{minipage} 
    \end{figure} 
    \begin{prop}
    Let $ a> 1/4 $. Then $ \ee^{t \Re \lambda_1} \ee^{- t M_{a, 0}} $ is periodic with period $T=\displaystyle \frac{4\pi}{\sqrt{4\,a-1}}$.
    \end{prop}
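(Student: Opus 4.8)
The plan is to exploit the special block structure that $M_{a,b}$ acquires when $b = 0$. Setting $b = 0$ in \eqref{matrice-1}, I observe that $M_{a,0}$ couples only the index pairs $(1,3)$ and $(2,4)$: after reordering the canonical basis of $\R^4$ as $(e_1,e_3,e_2,e_4)$, the matrix becomes block-diagonal with two identical $2\times 2$ blocks
\[
	N = \begin{pmatrix} 1 & \sqrt a \\ -\sqrt a & 0 \end{pmatrix}.
\]
Since the matrix exponential respects this block-diagonal decomposition, it suffices to prove that $\ee^{t\Re\lambda_1}\ee^{-tN}$ is periodic with the stated period, as $\ee^{-tM_{a,0}}$ is then block-diagonal with both diagonal blocks equal to $\ee^{-tN}$.

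First I would record that when $b = 0$ the quantity $A = 1-4a$ is real, so $c_1 = 0$ and $c_2 = \sqrt{4a-1}$ for $a > 1/4$; in particular $\Re\lambda_1 = \tfrac12$ by \eqref{expression-auxiliaire1}. Next, writing $N = \tfrac12 I_2 + \tilde N$ with $\tilde N = \begin{pmatrix} 1/2 & \sqrt a \\ -\sqrt a & -1/2\end{pmatrix}$, I note that $\tilde N$ is traceless, so by Cayley--Hamilton $\tilde N^2 = -\det(\tilde N)\,I_2 = (\tfrac14 - a)I_2 = -\omega^2 I_2$, where $\omega = \tfrac12\sqrt{4a-1} = c_2/2$. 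The hypothesis $a > 1/4$ is precisely what makes $\omega$ a nonzero real number, so that $\tilde N$ plays the role of a purely imaginary scalar. From $\tilde N^2 = -\omega^2 I_2$ one obtains the closed form $\ee^{-t\tilde N} = \cos(\omega t)I_2 - \tfrac{\sin(\omega t)}{\omega}\tilde N$, verified by checking that both sides solve the linear ODE $X' = -\tilde N X$ with $X(0) = I_2$.

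Combining these, since $\tfrac12 I_2$ and $\tilde N$ commute we have $\ee^{-tN} = \ee^{-t/2}\ee^{-t\tilde N}$, and therefore
\[
	\ee^{t\Re\lambda_1}\ee^{-tN} = \ee^{t/2}\ee^{-t/2}\ee^{-t\tilde N} = \cos(\omega t)I_2 - \frac{\sin(\omega t)}{\omega}\tilde N,
\]
whose entries are trigonometric functions of angular frequency $\omega$. This is manifestly periodic with period $2\pi/\omega = 4\pi/\sqrt{4a-1}$. Moreover, since $\tilde N$ is not a scalar multiple of $I_2$ (its off-diagonal entries $\pm\sqrt a$ are nonzero), the matrices $I_2$ and $\tilde N$ are linearly independent, so equality of the function at times $t$ and $t+T'$ forces $\cos(\omega(t+T'))=\cos(\omega t)$ and $\sin(\omega(t+T'))=\sin(\omega t)$ for all $t$, i.e.\ $\omega T'\in 2\pi\Z$; hence the fundamental period is exactly $4\pi/\sqrt{4a-1}$, as claimed.

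I do not expect a genuine obstacle here: once the block-diagonal reduction is recognized, everything follows from the elementary fact that a $2\times 2$ matrix with $\tilde N^2 = -\omega^2 I_2$ exponentiates to a rotation-type matrix. The only point requiring care is confirming the role of $a > 1/4$, which also explains the numerical observation: for $a < 1/4$ one instead has $\tilde N^2 = \mu^2 I_2$ with $\mu = \tfrac12\sqrt{1-4a} > 0$, so $\ee^{-t\tilde N}$ involves $\cosh(\mu t)$ and $\sinh(\mu t)$ and is not periodic, while for $a = 1/4$ the matrix $\tilde N$ is nilpotent and $\ee^{-t\tilde N} = I_2 - t\tilde N$ grows linearly in $t$. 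This pinpoints exactly where the hypothesis enters.
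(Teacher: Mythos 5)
Your proof is correct, and it takes a genuinely different route from the paper's. The paper diagonalizes $M_{a,0}$ outright: it records the eigenvalues $\lambda_{1,2}=\frac{1}{2}(1\mp\sqrt{1-4a})$, each of multiplicity two, exhibits an explicit eigenvector matrix $P$ with $M_{a,0}=PDP^{-1}$, $D=\mathrm{Diag}(\lambda_1,\lambda_1,\lambda_2,\lambda_2)$, reduces periodicity (via the group property of the commuting family $\ee^{t/2}\ee^{-tM_{a,0}}$) to finding $t_0>0$ with $\ee^{-t_0D}=\ee^{-t_0/2}I_4$, and solves this scalar condition to get $t_0=4k\pi/\sqrt{4a-1}$, $k\in\Z$. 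You never diagonalize: the permutation $(e_1,e_3,e_2,e_4)$ splits $M_{a,0}$ into two copies of the real $2\times 2$ block $N$, and Cayley--Hamilton applied to the traceless part $\tilde{N}$ yields the closed rotation-type formula $\ee^{t/2}\ee^{-tN}=\cos(\omega t)I_2-\omega^{-1}\sin(\omega t)\tilde{N}$ with $\omega=\frac{1}{2}\sqrt{4a-1}$. Each approach buys something. Yours stays over $\R$, needs no eigenvectors and no inversion of $P$, pins down the \emph{fundamental} period explicitly (by linear independence of $I_2$ and $\tilde{N}$ --- the paper obtains this only implicitly, since its equivalence characterizes all times at which the flow equals $I_4$), and isolates exactly where $a>1/4$ enters through the trichotomy $\tilde{N}^2=-\omega^2I_2$ (periodic), $\tilde{N}^2=0$ (linear growth at $a=1/4$), $\tilde{N}^2=\mu^2I_2$ (hyperbolic growth for $a<1/4$), which also explains the paper's numerical observation that periodicity disappears for $a<1/4$. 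The paper's route, for its part, recycles the eigenvalue computation of Lemma \ref{lem q1} specialized to $b=0$ and the machinery (Proposition \ref{prop exp}) already set up for the rest of the article, so the argument there is a one-line scalar computation once the change of basis is written down.
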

    \begin{proof}
    We recall the matrix $ M_{a,0} $ associated with the Kramers-Fokker-Planck operator when $ b = 0 $:
    $$ M_{a,0}=\begin{pmatrix}
1&0&\sqrt{a}&0\\
0&1&0&\sqrt{a}\\
-\sqrt{a}&0&0&0\\
0&-\sqrt{a}&0&0
\end{pmatrix} \,.$$
The eigenvalues of $ M_{a, 0} $ are $ \lambda_1 $ and $ \lambda_2 $, each with multiplicity $ 2 $, explicitly given by
$$\lambda_1=\frac{1}{2}(-\sqrt{1-4\,a}+1) \text{ and } \lambda_2=\frac{1}{2}(\sqrt{1-4\,a}+1)\,. $$
The matrix $ M_ {a, 0} $ is diagonalizable, and we let $ P $ denote the change-of-basis matrix from the canonical basis to the basis of eigenvectors,
$$ P=\begin{pmatrix}
0&-\frac{\lambda_1}{\sqrt{a}}&0&-\frac{\lambda_2}{\sqrt{a}}\\
-\frac{\lambda_1}{\sqrt{a}}&0&-\frac{\lambda_2}{\sqrt{a}}&0\\
0&1&0&1\\
1&0&1&0
\end{pmatrix}\,.$$
The matrix $ M_{a, 0} $ can be written as
\begin{align}
M_{a,0}=P\,D\,P^{-1} \text{ with } D= \mathrm{Diag}(\lambda_1,\lambda_1,\lambda_2,\lambda_2)\,.
\end{align}
To show that $ \ee^{t \Re \lambda_1} \ee^{- t  M_{a, 0}} $ is periodic, we must show that there exists $ t_{0}> 0 $ such that
$$\ee^{t_0/2}\ee^{-t_0\,M_{a,0}}=I_4\,, $$
because $\Re \lambda_1 =1/2$, according to Proposition \ref{prop exp}, we have that
$$ \ee^{-t\,M_{a,0}}= P\,\ee^{-t\,D}\,P^{-1}\,,$$
therefore, the question amounts to showing the periodicity of the matrix $ \ee^{ t / 2} \ee^{- t D} $ in the sense introduced above. That is to say we are looking for a real $ t_0> 0 $ such that
\begin{align*}
\ee^{-t_0\,D}=\ee^{-t_0/2}I_4\,,
\end{align*}
and using the fact that $$\ee^{-t_0\,D}=\mathrm{Diag} (\ee^{-t_0\lambda_1}, \ee^{-t_0\lambda_2},\ee^{-t_0\lambda_1},\ee^{-t_0\lambda_2})\,,$$
we observe that
\begin{align*}
\ee^{-t_0\,D}=\ee^{-t_0/2}I_4
&\Longleftrightarrow \ee^{\frac{t_0}{2}(i\,\sqrt{4\,a-1} -1)}=\ee^{-\frac{t_0}{2}}\,,\\
&\Longleftrightarrow \frac{t_0}{2}\,\sqrt{4\,a -1}=2\,k\,\pi\text{ with } k\in \mathbb{Z}\,,\\
&\Longleftrightarrow t_0=\frac{4\,k\,\pi}{\sqrt{4\,a-1}}\,.
\end{align*}
In particular, we take $ k = 1 $, we get $T=\frac{4\,\pi}{\sqrt{4\,a-1}}$, this shows the periodicity of $ \ee^{- t_0 / 2} \ee^{- t M_{a, 0}} $ when $ a> 1/4 $ with period $ T $.
    \end{proof}
    \section{Numerical illustrations of main results}
\subsection{Spectral abscissa of $ M_ {a, b} $.}
In the table below we calculate numerically, using Scilab, the values of the spectral abscissa of the matrix $ M_{a, b} $ which exactly equals $ \Re \lambda_1 = (1-c_1) / 2 $ in comparing those with the values of $ a / b^2 $ by setting $ a = 14 $ and taking several values of $ b $.
\begin{center}
\begin{tabular}{|l|l|l|}
\hline  b & Spectral abscissa  & $a/b^2$\\
\hline  5& 0.221337 & 0.56  \\
\hline 10 & 0.0992201 &0.14\\
\hline 100 & 0.0013940 & 0.0014\\
\hline 200 & 0.0003496 & 0.00035\\
\hline 800 & 0.0000219 & 0.0000219\\
\hline
\end{tabular}
\end{center}
We observe in the previous table that when $ b $ increases, the spectral gap of the operator $ P_{a, b} $, which coincides with the spectral abscissa of the matrix $ M_ {a, b} $, approaches $ a / b ^ 2 $. In particular this confirms the asymptotics when $ b \to + \infty $ of $  \Re \lambda_1 $. Physically, this represents the rate of return to the equilibrium, and our result therefore serves to quantify the influence of a large magnetic field slowing down the rate of return to the equilibrium.

 \subsection{Regime when $ t \to 0^+ $.}
 In this part, we will give some numerical illustrations in small time of the norm of $ \Vert \ee^{- tM_{a, b}} \Vert $. This is the rate of return to the equilibrium for $ \ee^{- tP_{a, b}} $ in a regime where $ \Vert \ee^{- tM_{a, b}}  \Vert $ is significantly greater than $ \ee^{- t \Re \lambda_1} $. By observing Figures \ref{f1} and \ref {f2}, where we draw the exact exponential norm and its associated approximations given in Proposition \ref{prop_Taylor} with a polynomial error of order $ t^7 $.
We note that when we increase the magnetic field, the error increases, which is to be expected because our approximations are taken in the regime where $ \langle A \rangle t^2 $ is sufficiently small. Then, in Figures \ref{f3} and \ref{f4}, we draw the exact norm with the polynomial given in Proposition \ref{prop_Taylor} of order $ 6 $. We observe that when we increase the magnetic field the error increases (see Figure \ref{f4}).
 \begin{figure}[!h]      
   \begin{minipage}[c]{1\linewidth}
   \centering
  \includegraphics[width=.95\linewidth]{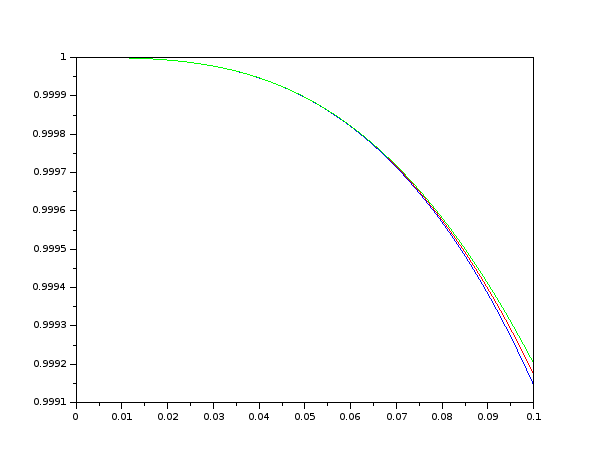} 
 \caption{Comparison between the behavior in time of the norm $ \Vert \ee^{- tM_{a, b}} \Vert $ and their approximations in small time of the errors ($ - \langle A \rangle^3 \, t^7 $ and $ \langle A \rangle^3 \, t^7) $ with $ b = 2 $ from left to right.}
   \label{f1}
   \end{minipage}      
   \begin{minipage}[c]{1\linewidth}
     \includegraphics[width=.95\linewidth]{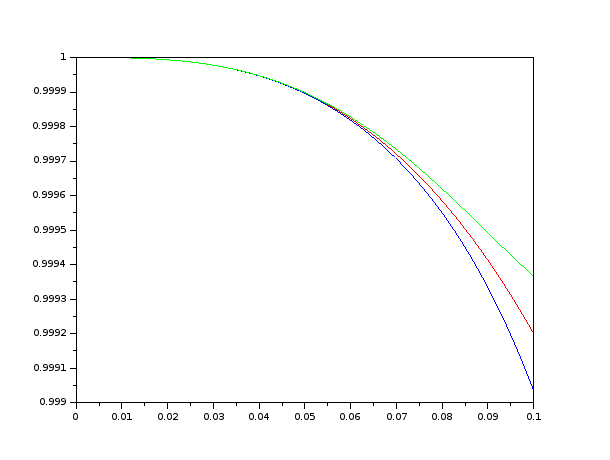}
       \caption{Comparison between the behavior in time of the norm $ \Vert \ee^{- tM_{a, b}} \Vert $ and their approximations in small time of the errors ($ - \langle A \rangle^3 \, t^7 $ and $ \langle A \rangle^3 \, t^7) $ with $ b = 10 $ from left to right.}
      \label{f2} 
      \end{minipage}      

    \end{figure} 
    
\begin{figure}[!h]
\begin{minipage}[c]{1\linewidth}
  \centering
    \includegraphics[width=1\linewidth]{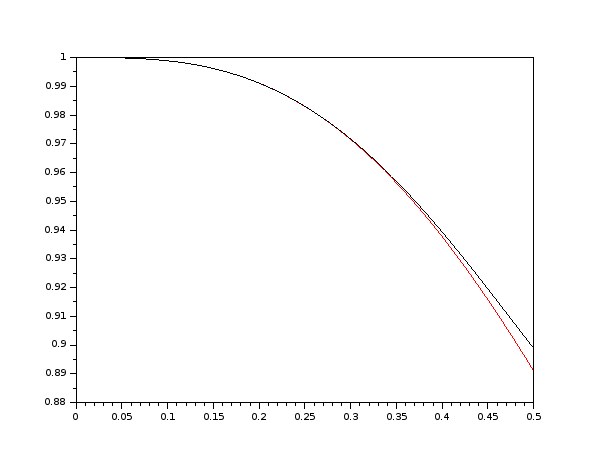} 
 \caption{Comparison between the behavior in time of the norm $ \Vert \ee^{- t  M_{a, b}} \Vert $ and its limited development when $ t \to 0 $ to the order $ 6 $ when $ b = 2$.}
   \label{f3}
   \end{minipage}      
   \begin{minipage}[c]{1\linewidth}
   \centering
   \includegraphics[width=1\linewidth]{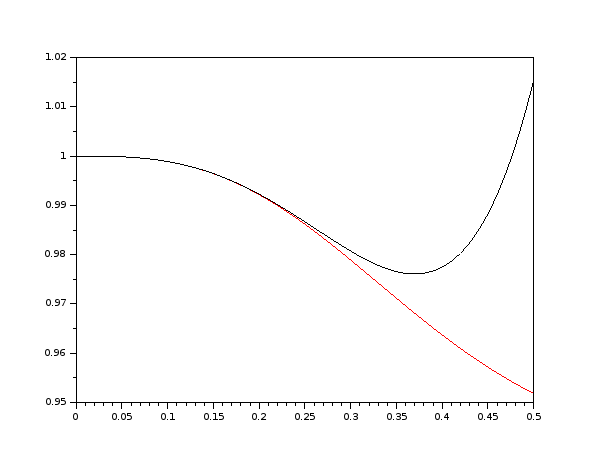} 
 \caption{Comparison between the behavior in time of the norm $ \Vert \ee^{- t  M_{a, b}} \Vert $ and its limited development when $ t \to 0 $ to the order $ 6 $ when $b=10$.}
   \label{f4}
   \end{minipage}
    \end{figure} 
   \subsection{Regime when $ b \to + \infty $.}
   In this part, we will give numerical illustrations which illustrate the uniform estimates obtained in Proposition \ref{prop_large_b}. Above all, we compare the deviations
   $$ \Vert \ee^{-tP_{a,b}}-\Pi_0\Vert=\Vert \ee^{-tM_{a,b}}\Vert,$$
with the approximations given in Proposition \ref{prop_large_b}. We observe numerically in Figures \ref{f5} and \ref{f6} that when we increase the magnetic field the approximation becomes more precise. In examining the figures, it seems that when we increase the magnetic field the norm of the matrix exponential more closely resembles the self-adjoint prediction which equals $ \ee^{- t \Re \lambda_1} $. In addition, we compare on a logarithmic scale in Figure \ref{f9} the exponential norm when $ b = 0 $ and $ b \neq 0 $. We observe that the non-self-adjoint character seems to disappear when $ b $ increases.   
   
   To study the return to the equilibrium, we represent in Figure \ref{f10} the norm $ \Vert \ee^{- tP_{a, b}} - \Pi_0 \Vert $ by taking several values of the magnetic parameter $ b $ between $ 20 $ and $ 100 $. We notice that when $ b = 20 $ the return to equilibrium appears at a time between $ 200 $ and $ 300 $, while when $ b = 100 $ on a time scale is equal to $ 500 $, showing that the return to the equilibrium is weaker. Consequently, we see that the magnetic field slows the return to equilibrium.
    \begin{figure}[!h]    
   \begin{minipage}[c]{1\linewidth}
   \centering{
   \includegraphics[width=.9\linewidth]{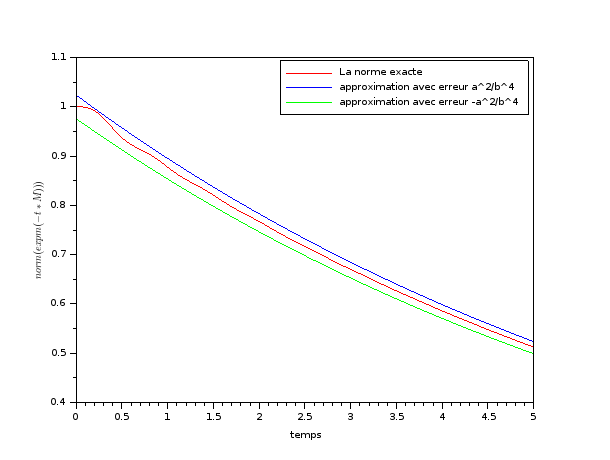} 
	}
 \caption{Comparison between the behavior in time of the norm $ \Vert \ee^{- t M_{a, b}} \Vert $ and its approximations associated with Proposition \ref{prop_large_b} when $ b = 8 $.}
   \label{f5}
 \end{minipage}      
   \begin{minipage}[c]{1\linewidth}
      \centering{ 
      \includegraphics[width=.9\linewidth]{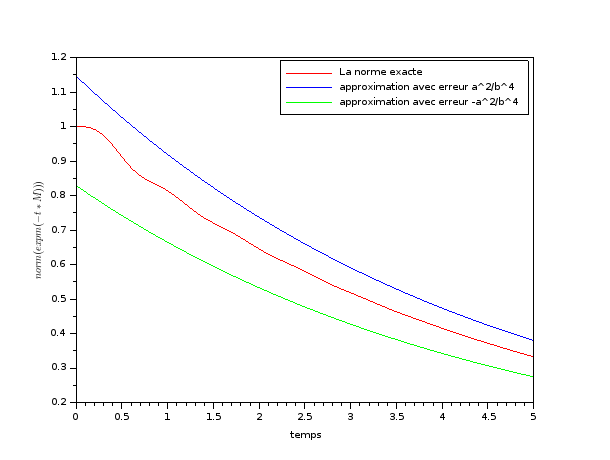}
      }
       \caption{Comparison between the behavior in time of the norm $ \Vert \ee^{- t M_{a, b}} \Vert $ and its approximations associated with Proposition \ref{prop_large_b} when $ b = 5 $.}
      \label{f6}
\end{minipage}      
 
    \end{figure} 
    
     \begin{figure}[!h]
   \begin{minipage}[c]{1\linewidth}
   \centering{
   \includegraphics[width=.9\linewidth]{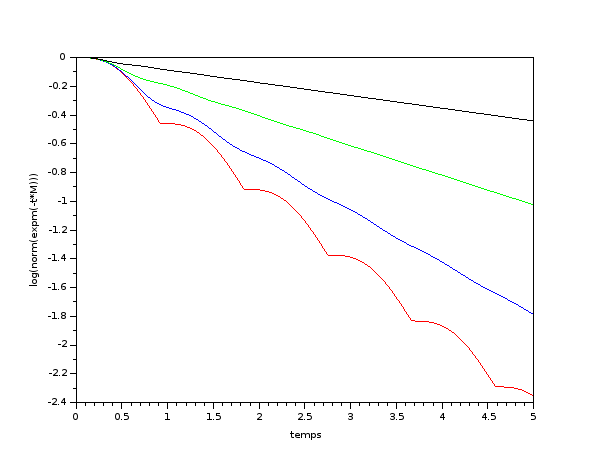} 
	}
 \caption{Comparison between the time behavior of $\log (\Vert \ee^{- t M_{a, b}} \Vert) $ when $ b = 0$ and when $b \neq 0$ from left to right}
   \label{f9}
   \end{minipage}      
   \begin{minipage}[c]{.9\linewidth}
  \centering{ 
 \includegraphics[width=1\linewidth]{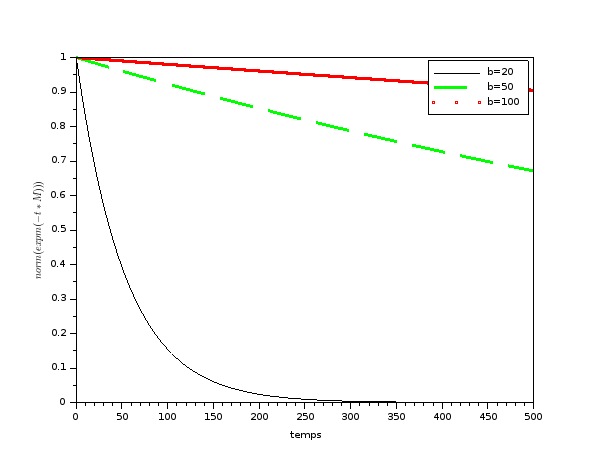}
  }
  \caption{Time behavior of $ \Vert \ee^{- t M_{a, b}} \Vert $ in several values of $ b =20,  50$ and $ 100 $.}
  \label{f10}
  \end{minipage}      
  \end{figure}
  \subsection{Long time regime}
  In this part we will give numerical illustrations which illustrate the result obtained in Proposition \ref{prop_long_t}.
  As mentioned in the introduction, the result of Proposition \ref{prop_long_t} shows that there is $ C> 0 $ such that if we assume that
  $$ E(t) = \ee^{-2c_1\,t} + \langle a\rangle^2 b^{-4}\ee^{-c_1t} \leq \frac{1}{C},$$ then $$ \left|\left( \frac{c_1^2+c_2^2}{c_2^2+1}\right)^{1/2}\ee^{\frac{t(1-c_1)}{2}}\|\ee^{-tM_{a,b}}\|-1\right| \leq CE(t).$$  
  One can show that the norm of the spectral projector $ \Pi_1 $  associated with $ \lambda_1 $ is
  $$\Vert \Pi_1\Vert=\lim\limits_{t\to +\infty}\ee^{\frac{t(1-c_1)}{2}}\|\ee^{-tM_{a,b}}\|= \sqrt{\frac{1+c_2^2}{c_1^2+c_2^2}}=\sqrt{R_1}. $$
  In Figure \ref{f7}, we draw the behavior in time of the function $ \ee^{\frac{t (1-c_1)}{2}} \| \ee^{- tM_{a, b} } \| $ and the norm exactof the spectral projector $ \Pi_1 $ when the electric parameter $ a $ is $ 8 $ and the magnetic parameter $ b $ is $ 12 $. We can observe that over time the function $ \ee^{\frac{t (1-c_1)}{2}} \| \ee^{- tM_{a, b}} \| $ gets closer to the exact norm of $ \Pi_1 $.
  Then, in Figures \ref{f8} and \ref{f15}, we compare the function $ \ee^{\frac{t (1-c_1)}{2}} \| \ee^{- tM_{a, b}} \| $ and the approximations $ \sqrt{R_1} (1-CE (t)) $ and $ \sqrt{R_1} (1 + CE (t)) $ when $ a = 8 $ and $ b =12 $  or $ 5$ respectively with the constant $ C =50 $. Note that in the two figures the curve of the function $ t \to \ee^{\frac{t (1-c_1)}{2}} \| \ee^{- tM_{a, b}} \| $ lies well between the two curves associated with the approximations cited just before. In addition, from a certain time $ t $ between $ 2.5 $ and $ 3 $ for Figure \ref{f8} (when $ b = 12 $), we see that the error between the three curves becomes very small. Whereas in Figure \ref{f15} and when $ b = 5 $, a similar decrease of the error appears when the time $ t $ is between $ 4 $ and $ 5 $ instead. In conclusion, the estimate given in Proposition \ref{prop_long_t} becomes more precise when the magnetic field $ b $ is increased.
  
  \section*{Acknowledgments} The author is grateful to Joe Viola for his important continued help and advice throughout the creation of this work. The author thanks also the Centre Henri Lebesgue ANR-11-LABX-0020-01 for his support.
   \begin{figure}[!h]    
   \begin{minipage}[c]{1\linewidth}
    \includegraphics[width=.9\linewidth]{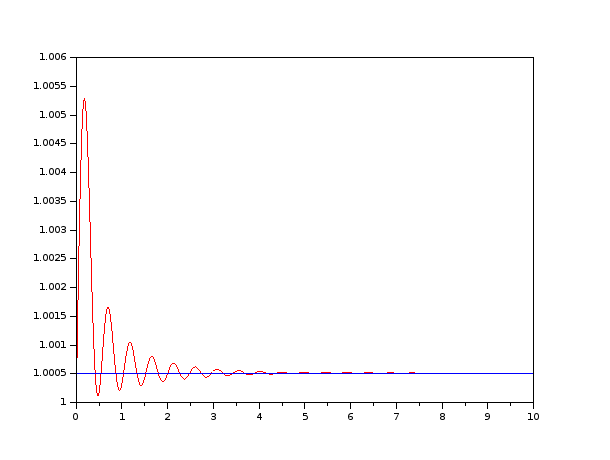} 
 \caption{Comparison between the behavior in time of the norm $\ee^{t\Re \lambda_1}\Vert \ee^{-t\,M_{a,b}}\Vert$ and $\displaystyle\sqrt{\frac{c_2^2+1}{c_1^2+c_2^2}}$ when $a=8$ and $b=12$.}
   \label{f7}
  \end{minipage}      
   \begin{minipage}[c]{1\linewidth}
      \includegraphics[width=.9\linewidth]{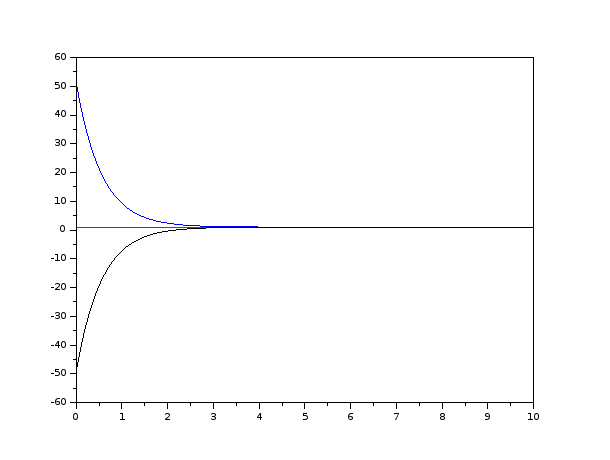}
       \caption{Comparison between the behavior in time of the norm $ \ee^{t \Re \lambda_1} \Vert \ee^{- t  M_{a, b}} \Vert $ and its approximations given in Proposition \ref{prop_long_t} with $ a = 8 $ and $ b = 12 $ from left to right when $ C = 50 $.}
      \label{f8}
      \end{minipage}     
   \end{figure}
\begin{figure}[!h]
      \begin{minipage}[c]{1\linewidth}
     \includegraphics[width=1\linewidth]{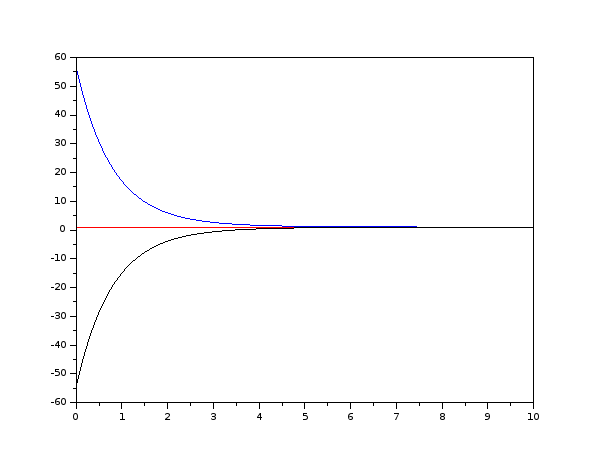}
       \caption{Comparison between the behavior in time of the norm $ \ee^{t \Re \lambda_1} \Vert \ee^{- t  M_{a, b}} \Vert $ and its approximations given in Proposition \ref{prop_long_t} with $ a = 8 $ and $ b = 5$ from left to right when $ C = 50 $.}
      \label{f15}
      \end{minipage}     
   \end{figure}
\bibliographystyle{alpha}
\bibliography{library}
\addcontentsline{toc}{section}{Bibliographie} 
\end{document}